\renewcommand{\labelenumi}{\rm(\theenumi).}
\renewcommand{\figurename}{Fig.}
\renewcommand\thesection{\arabic{section}}
\renewcommand{\baselinestretch}{1.2}
\newtheorem{theorem}{Theorem}[section]
\newtheorem{proposition}[theorem]{Proposition}
\newtheorem{lemma}[theorem]{Lemma}
\newtheorem{corollary}[theorem]{Corollary}
\theoremstyle{definition}
\theoremstyle{remark}
\newtheorem{remark}[theorem]{Remark}
\numberwithin{equation}{section}
\newcommand{\abs}[1]{\lvert#1\rvert}
\newcommand{\blankbox}[2]{%
  \parbox{\columnwidth}{\centering
    \setlength{\fboxsep}{0pt}%
    \fbox{\raisebox{0pt}[#2]{\hspace{#1}}}%
  }%
}
\title
{{Number of Matchings of Low Order in (4,6)-Fullerene Graphs\thanks{This work was supported by NSFC (Grant No.11371180) and Cuiying Student Innovation Fund of Lanzhou University.} }}
\author{Zhi-Feng Wei$^{a,b}$\, Heping Zhang$^a$\thanks{
Corresponding author. \newline
\emph{E-mail address}: weizhifeng@vip.163.com,  zhanghp@lzu.edu.cn} \\
{\footnotesize $^{a}$School of Mathematics and Statistics, Lanzhou University, Lanzhou, Gansu 730000, P. R.~China}\\
{\footnotesize $^{b}$Cuiying Honors College, Lanzhou University, Lanzhou, Gansu 730000, P. R.~China}\\
}
\date{}
\begin{document}

\maketitle

\centerline{(Published by MATCH Commun.\ Math.\ Comput.\ Chem.\ in 2017)}

\begin{abstract}
    We obtain the formulae for the numbers of 4-matchings and 5-matchings  in terms of the number of hexagonal faces  in (4, 6)-fullerene graphs by studying structural classification  of 6-cycles and some local structural properties,  which correct the corresponding wrong results published.  Furthermore,  we obtain a  formula for the number of 6-matchings in tubular (4, 6)-fullerenes in terms of the number of  hexagonal  faces, and a formula for the number of 6-matchings in the other (4,6)-fullerenes  in terms of the numbers of  hexagonal  faces and  dual-squares.

    \baselineskip=0.30in

\end{abstract}

\section{Introduction}

    \thispagestyle{empty}

    Let $G$ be a graph with vertex set $V(G)$ and edge set $E(G)$. We use $n(G)$ and $m(G)$ to denote the numbers of vertices and edges of $G$ respectively. If $G$ is understood, then we use such notations without reference to $G$. A {\em $k$-matching} (or a {\em matching} of   order $k$) of $G$ is a set of $k$ pairwise nonadjacent edges. A {\em (4,6)-fullerene graph}, a mathematical model of a boron-nitrogen fullerene, is a  plane cubic graph with exactly six square faces and $\frac{n}{2}-4$ hexagonal faces. It is known that every (4,6)-fullerene graph is bipartite and 3-connected \cite{ZL10}.

    The number of matchings in a graph is of significance in theory and applications. The matching polynomial of a graph $G$ was defined as $\sum_{k=0}^{\nu}(-1)^kM(G,k)x^{n-2k}$ in  \cite{farrel79,gutman79}, where $\nu$ denotes the size of a maximum matching and $ M(G,k)$ the number of $k$-matchings. All roots of the matching polynomial are real, and the sequence $M(G,0), M(G,1),\ldots, M(G,\nu)$  is log-concave (cf. \cite{mtheory}). In 1971, Hosoya found a correlation between boiling point of some hydrocarbons  and  the total number of matchings in their molecular graphs~\cite{hosoya71}.
    \par
    There has already been some research into the enumeration of low order matchings. Klabjan and  Mohar \cite{kalabjan99} counted the matchings of  order at most 5 in hexagonal systems. Behmaram \cite{behmaram09} established a formula for the number of 4-matchings in triangle-free graphs with respect to the number of vertices, edges, degrees and 4-cycles. Vesalian and Asgari\cite{vesalian13}, and Vesalian et al. \cite{vesalian15} counted the 5-matchings and 6-matchings in graphs with girth at least 5 and 6, respectively. However, their methods and results are not applicable to  (4,6)-fullerene graphs.
    Behmaram et al. \cite{behmaram13} recently studied the number of matchings of order no more than 4 in (4,6)-fullerene graphs. But, all 6-cycles of (4,6)-fullerene graphs were mistakenly identified with hexagonal faces. We can give many (4,6)-fullerene graphs with 6-cycles which do not bound faces. This together with an error in counting the 5-length paths of (4,6)-fullerene graphs leads to wrong formulae for the numbers of 4-matchings \cite{behmaram13} and 5-matchings ~\cite{li14}.  So, we first give  a classification of 6-cycles of (4,6)-fullerene graphs in Theorem~\ref{struct6}. Then we correct the above errors and derive the enumeration result for 6-matchings of (4,6)-fullerene graphs. It turns out that there is not a unified  formula for the number of 6-matchings in (4,6)-fullerenes, so we give two formulae for 6-matchings in (4, 6)-fullerenes according to different graph structures.
    \par
    This paper is organized as follows. In section~\ref{sec_struct}, we study the structure of 6-cycles and obtain  a structural classification of (4,6)-fullerene graphs and some local properties. In section~\ref{sec_enum}, by establishing a series of recurrence relations, we deduce the enumeration  of higher order matchings to lower order ones. We obtain not only the correct formulae for the numbers of 4-matchings and 5-matchings but also the formulae for the numbers of 6-matchings. Meanwhile, we enumerate some other subgraphs of (4,6)-fullerene graphs.

\section{A classification of (4,6)-fullerene graphs}
    \label{sec_struct}

    Let $G$ be a graph. We say $G$ is  \emph{cyclically $k$-edge-connected} if $G$ cannot be separated into two components, each containing a cycle, by deletion of fewer than $k$ edges. We call the greatest integer $k$ such that $G$ is cyclically $k$-edge-connected the {\em cyclical edge-connectivity} of $G$, denoted by $\zeta(G)$. If $G$ is a (4,6)-fullerene graph, then $\zeta(G)=3$  or  4 (cf.~\cite{doslic03}). A tubular (4,6)-fullerene graph consists of $t\geqslant 0$ concentric layers of three hexagons, capped on each end by a cap formed by three squares. We denote such a tubular (4,6)-fullerene graph with $t$ layers of hexagonal faces by $T_t$. Figure~\ref{tubular} presents $T_3$ (Notice that the three dangling edges in the outside area actually connect to the same vertex). In the degenerate case $t = 0$, we get the ordinary cube. The family of all such tubes $T_t, t\geq 1$, is denoted by $\mathcal{T}$~\cite{doslic03}.
    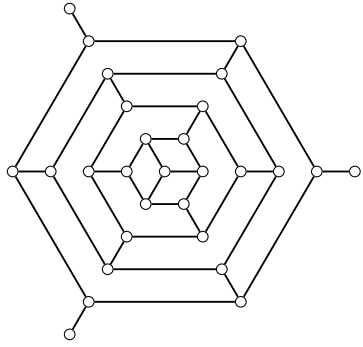
\begin{figure}[h]
        \centering
            \subfigure{
                \begin{tikzpicture}[scale=0.1]
                    \tikzstyle{vertex}=[draw,circle,minimum size=4pt,inner sep=0pt]
                    \tikzstyle{edge} = [draw,line width=0.7pt,-,black]
                    \tikzstyle{selected edge} = [draw,line width=2pt,-,black]
                    \node[vertex] (v0) at (0,0)  {};
                    \node[vertex] (v1) at (5,0)  {};
                    \node[vertex] (v2) at (2.5,4.32)   {};
                    \node[vertex] (v3) at (-2.5,4.32)   {};
                    \node[vertex] (v4) at (-5,0)  {};
                    \node[vertex] (v5) at (-2.5,-4.32)   {};
                    \node[vertex] (v6) at (2.5,-4.32)  {};
                    \node[vertex] (v7) at (10,0)  {};
                    \node[vertex] (v8) at (5,8.64)   {};
                    \node[vertex] (v9) at (-5,8.64)   {};
                    \node[vertex] (v10) at (-10,0)  {};
                    \node[vertex] (v11) at (-5,-8.64)   {};
                    \node[vertex] (v12) at (5,-8.64)  {};
                    \node[vertex] (v13) at (15,0)  {};
                    \node[vertex] (v14) at (7.5,12.96)   {};
                    \node[vertex] (v15) at (-7.5,12.96)   {};
                    \node[vertex] (v16) at (-15,0)  {};
                    \node[vertex] (v17) at (-7.5,-12.96)   {};
                    \node[vertex] (v18) at (7.5,-12.96)  {};
                    \node[vertex] (v19) at (20,0)  {};
                    \node[vertex] (v20) at (10,17.28)   {};
                    \node[vertex] (v21) at (-10,17.28)   {};
                    \node[vertex] (v22) at (-20,0)  {};
                    \node[vertex] (v23) at (-10,-17.28)   {};
                    \node[vertex] (v24) at (10,-17.28)  {};
                    \node[vertex] (v25) at (25,0)  {};
                    \node[vertex] (v27) at (-12.5,21.6)   {};
                    \node[vertex] (v29) at (-12.5,-21.6)   {};
                    \draw[edge] (v1)  -- (v2);
                    \draw[edge] (v2)  -- (v3);
                    \draw[edge] (v3)  -- (v4);
                    \draw[edge] (v4)  -- (v5);
                    \draw[edge] (v5)  -- (v6);
                    \draw[edge] (v6)  -- (v1);
                    \draw[edge] (v0)  -- (v1);
                    \draw[edge] (v0)  -- (v3);
                    \draw[edge] (v0)  -- (v5);
                    \draw[edge] (v7)  -- (v8);
                    \draw[edge] (v8)  -- (v9);
                    \draw[edge] (v9)  -- (v10);
                    \draw[edge] (v10)  -- (v11);
                    \draw[edge] (v11)  -- (v12);
                    \draw[edge] (v12)  -- (v7);
                    \draw[edge] (v2)  -- (v8);
                    \draw[edge] (v4)  -- (v10);
                    \draw[edge] (v6)  -- (v12);
                    \draw[edge] (v13)  -- (v14);
                    \draw[edge] (v14)  -- (v15);
                    \draw[edge] (v15)  -- (v16);
                    \draw[edge] (v16)  -- (v17);
                    \draw[edge] (v17)  -- (v18);
                    \draw[edge] (v18)  -- (v13);
                    \draw[edge] (v7)  -- (v13);
                    \draw[edge] (v9)  -- (v15);
                    \draw[edge] (v11)  -- (v17);
                    \draw[edge] (v19)  -- (v20);
                    \draw[edge] (v20)  -- (v21);
                    \draw[edge] (v21)  -- (v22);
                    \draw[edge] (v22)  -- (v23);
                    \draw[edge] (v23)  -- (v24);
                    \draw[edge] (v24)  -- (v19);
                    \draw[edge] (v14)  -- (v20);
                    \draw[edge] (v16)  -- (v22);
                    \draw[edge] (v18)  -- (v24);
                    \draw[edge] (v19)  -- (v25);
                    \draw[edge] (v21)  -- (v27);
                    \draw[edge] (v23)  -- (v29);
                \end{tikzpicture}}
                \caption{A tubular (4,6)-fullerene graph $T_3$ with~3 hexagon-layers.}
                \label{tubular}
        \end{figure}
    \par
    In a (4, 6)-fullerene graph $G$, $h(G)$ denotes the number of hexagonal faces.
    \begin{lemma}\label{eulercac}
    Let $G$ be a (4,6)-fullerene graph. Then we have\\
    (i)\cite{behmaram13} $n=2h+8, m=3h+12;$\\
    (ii) $h\not=1$;\\
    (iii) Any two faces of $G$ cannot have more than one common edge; and\\
    (iv) \cite{doslic03} $G\in\mathcal{T}$ if and only if $\zeta(G)=3$.
    \end{lemma}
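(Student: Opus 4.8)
For part (i), the plan is to combine the cubic condition $3n=2m$ with Euler's formula $n-m+f=2$ and the fact that the $f=h+6$ faces consist of $h$ hexagons and $6$ squares; counting edge--face incidences gives $2m=6h+24$, and solving the resulting linear system yields $n=2h+8$ and $m=3h+12$. This is exactly the cited computation of \cite{behmaram13}, and part (iv) is quoted verbatim from \cite{doslic03}. Thus the real content of the lemma is (iii) and (ii), and I would prove (iii) first since the proof of (ii) relies on it.

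For (iii), I would argue by contradiction, assuming two faces $F_1,F_2$ share at least two edges and picking two of them, $e=x_1x_2$ and $e'=y_1y_2$. The first observation is that $e$ and $e'$ cannot meet at a vertex: at a degree-$3$ vertex there are three incident faces and three incident edges, and each pair of those faces shares exactly one edge, so no two faces can share two edges through a common vertex. Hence $e,e'$ are vertex-disjoint, and the two boundary arcs $A$ (from $x_2$ to $y_1$) and $B$ (from $y_2$ to $x_1$) of $\partial F_1$, together with the corresponding arcs $C,D$ of $\partial F_2$, are genuine paths (nondegenerate since $G$ is simple). The key point is geometric: on the sphere $F_1\cup e\cup e'\cup F_2$ is an annulus whose two boundary cycles are $A\cup C$ and $B\cup D$, so $e$ and $e'$ are the only two edges joining the part of $G$ on one side to the part on the other. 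Deleting $e$ and $e'$ then separates $G$ into two components, one containing the cycle $A\cup C$ and the other the cycle $B\cup D$; this is a cyclic $2$-edge-cut, contradicting $\zeta(G)\geqslant 3$ (recall $\zeta(G)\in\{3,4\}$). The step requiring the most care is verifying that $A\cup C$ and $B\cup D$ really are cycles lying on opposite sides of the cut, which follows from choosing $e,e'$ consecutive among the common edges of $\partial F_1$ so that $A,C$ (respectively $B,D$) are internally disjoint.

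For (ii), I would again argue by contradiction and lean on (iii). If $h=1$, then (i) gives $n=10$, $m=15$, $f=7$, so $G$ has a unique hexagon $H=v_1v_2\cdots v_6$ and six squares. By (iii) each edge of $H$ lies on a distinct square, so the six squares $S_1,\dots,S_6$, with $S_i$ meeting $H$ along $v_iv_{i+1}$, are precisely all the squares, arranged cyclically around $H$. At each $v_i$ the third (non-hexagonal) edge is a ``radial'' edge $v_iu_i$ shared by $S_{i-1}$ and $S_i$, and since $S_i$ is a $4$-cycle its fourth side is the ``outer'' edge $u_iu_{i+1}$. Now $H$ uses $6$ of the $10$ vertices, leaving only $4$ outer vertices, so the edge count $6$ hexagon edges $+\,6$ radial edges $+$ outer edges $=15$ forces exactly three distinct outer edges. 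Hence the closed walk $u_1u_2\cdots u_6u_1$ traverses three edges of a bipartite graph on four vertices, each exactly twice; such a graph is a path $P_4$ or a star $K_{1,3}$.

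In either configuration I would check that two consecutive squares are forced to coincide on a second edge (the same outer edge, or an outer edge coinciding with a radial edge already shared by the two squares), which again contradicts (iii) and rules out $h=1$. I expect this final bipartite case analysis to be the only genuine obstacle in (ii); everything preceding it is forced by the edge count together with (iii).
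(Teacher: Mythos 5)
The paper itself offers no proof of this lemma: (i) and (iv) are quoted from \cite{behmaram13} and \cite{doslic03}, and (ii), (iii) are asserted without argument. So you are supplying genuinely new content, and your overall strategy is sound. For (iii), the Jordan-curve/annulus argument is correct: the two faces give a closed curve crossing $G$ only in the interiors of $e$ and $e'$, so $\{e,e'\}$ is a $2$-edge-cut. Two small remarks. First, you can shortcut the endgame: $G$ is $3$-connected and cubic, hence $3$-edge-connected, so a $2$-edge-cut is already a contradiction and you never need to exhibit cycles on both sides. Second, if you do insist on cyclic edge-connectivity, the claim that the side containing $B\cup D$ holds a cycle needs a word (e.g.\ every vertex of $G-\{e,e'\}$ has degree at least $2$, so each component contains a cycle); choosing $e,e'$ consecutive only protects the $A\cup C$ side.

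For (ii) there are two genuine gaps. The first is that you never rule out $u_i\in V(H)$, i.e.\ a chord of the hexagon; your counts ``only $4$ outer vertices'' and ``$6$ radial edges $+$ outer edges $=15$'' silently assume the radial edges are distinct from each other and from the hexagon edges, which fails if some $u_i=v_{i+3}$. This is easy to repair: bipartiteness kills odd chords, and a chord $v_iv_{i+3}$ creates a $4$-cycle which by Lemma~2.2 is a square face sharing two edges with $H$, contradicting (iii) — but it must be said. The second gap is that the final $P_4$/$K_{1,3}$ analysis is only promised, not performed. It does work, and in fact both cases close the same way: the closed walk $u_1\cdots u_6u_1$ traverses each of the three tree edges exactly twice (each outer edge lies on exactly two of the six distinct squares), and in a tree on four vertices any such Eulerian double-traversal forces $u_i=u_{i+2}$ for some $i$, whence $S_i$ and $S_{i+1}$ share both the radial edge $v_{i+1}u_{i+1}$ and the outer edge $u_iu_{i+1}=u_{i+1}u_{i+2}$, contradicting (iii); alternatively one gets a vertex of degree at least $4$. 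Write that out and the proof is complete.
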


    \begin{lemma}\cite{jiang11}\label{square}
            Let $G$ be a (4,6)-fullerene graph and $C$ a 4-cycle in $G$. Then $C$ must be a facial cycle of $G$.
    \end{lemma}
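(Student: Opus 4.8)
The plan is to argue by contradiction, exploiting that $G$ is a $3$-connected cubic bipartite plane graph with cyclical edge-connectivity $\zeta(G)\in\{3,4\}$, in particular $\zeta(G)\geqslant 3$. Suppose $C=v_1v_2v_3v_4$ is a $4$-cycle of $G$ that does \emph{not} bound a face. First I would record two easy structural facts. Since $G$ is bipartite, $v_1,v_3$ (and likewise $v_2,v_4$) lie in the same colour class, so $C$ admits no chord; hence $C$ is an induced cycle, and being a Jordan curve in the plane it splits the plane into an interior and an exterior region, both nonempty because $C$ is not facial and $G$ is connected. Because $G$ is cubic, each $v_i$ meets exactly one edge off $C$, and by chordlessness this ``third edge'' genuinely points into the interior or into the exterior. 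Let $k$ denote the number of these four off-edges that enter the interior.

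Next I would dispose of the extreme values of $k$. If $k=0$, then no edge, and (since $G$ is connected) no vertex, lies strictly inside $C$, so the interior region is a face bounded by $C$, contradicting the assumption; the case $k=4$ is symmetric using the exterior. Thus $1\leqslant k\leqslant 3$, and after possibly interchanging the roles of interior and exterior I may assume $k\leqslant 2$, i.e.\ the interior is nonempty yet attached to $C$ by at most two edges. The goal is then to convert these $k\leqslant 2$ attaching edges into a cyclic edge cut: deleting them disconnects the strictly-interior part from the part containing $C$, and the latter obviously carries the cycle $C$.

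The crux, and the step I expect to require the most care, is to show that the interior part itself must contain a cycle, so that the $k\leqslant 2$ attaching edges form a genuine cyclic edge cut and contradict $\zeta(G)\geqslant 3$. For this I would count degrees: if the interior has $a\geqslant 1$ vertices, each of degree $3$ in $G$, and $e$ edges among themselves, then summing degrees and noting that exactly the $k$ attaching edges have a single interior endpoint gives $3a=2e+k$, whence $e=(3a-k)/2$. Were the interior a forest we would have $e\leqslant a-1$, forcing $3a-k\leqslant 2a-2$, i.e.\ $a\leqslant k-2\leqslant 0$, contradicting $a\geqslant 1$. Hence the interior must contain a cycle, and the $k\leqslant 2$ attaching edges form a cyclic edge cut of size at most $2$, contradicting $\zeta(G)\geqslant 3$. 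Therefore $k\in\{0,4\}$ and $C$ bounds a face. The remaining care points are the boundary bookkeeping---checking that the off-edge count is well defined (which follows from chordlessness) and that removing the attaching edges really leaves two sides each carrying a cycle rather than collapsing the interior---but the degree-counting inequality is the single fact that drives the whole argument.
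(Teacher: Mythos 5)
Your argument is correct. Note first that the paper itself offers no proof of this lemma --- it is imported wholesale from \cite{jiang11} --- so there is no in-paper argument to compare against; what you have written is a self-contained proof. The two ingredients you use are exactly the right ones: bipartiteness makes any $4$-cycle $C$ chordless, and the bound $\zeta(G)\geqslant 3$ (stated in the paper via \cite{doslic03}) rules out small cyclic edge cuts. Your degree count $3a=2e+k$ for the side of $C$ receiving $k\leqslant 2$ of the four off-edges, combined with the forest bound $e\leqslant a-1$, cleanly forces a cycle strictly inside, so the $k$ attaching edges form a cyclic edge cut of size at most $2$, a contradiction; the extreme cases $k=0$ and $k=4$ correctly collapse to $C$ being facial. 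The only points worth tightening in a written version are the ones you already flag: an inward off-edge cannot terminate on $C$ (that would be a chord), so the interior really has $a\geqslant 1$ vertices; and after deleting the attaching edges the interior portion need not be connected, but grouping all its components as one side of the cut still yields the forbidden cyclic edge cut. This is the standard ``short cycles are facial in cyclically edge-connected cubic plane graphs'' argument, and it is arguably more transparent than relying on the external reference.
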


    \subsection{Structure of 6-cycles}

        For a (4,6)-fullerene graph $G$, Lemma~\ref{square} claims that a 4-cycle must be a facial cycle. But a 6-cycle is not necessarily a facial cycle. So we will study the structure of 6-cycles in (4,6)-fullerene graphs.
        \par
        We call the two subgraphs in Figure~\ref{typebf} dual-square and square-cap, respectively. For convenience, a boundary 6-cycle of a plane graph is  often represented by the name of this graph if this will not lead to confusion.
        \par
        By Lemma~\ref{eulercac}(iv) and the fact that $\zeta(G)=3 \text{ or } 4$, we only need to study the following two cases: $\zeta(G)=4$ (equivalently, $G\notin\mathcal{T}$) and $\zeta(G)=3$ (equivalently, $G\in\mathcal{T}$).

        \begin{lemma}\cite{jiang11}\label{6cyclejiang}
            Given a (4,6)-fullerene graph $G\notin\mathcal{T}$, if $C$ is a 6-cycle of $G$, then $C$ is either\\
            (i) a hexagonal facial cycle, or\\
            (ii) a dual-square, or\\
            (iii) a square-cap.
        \end{lemma}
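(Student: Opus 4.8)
The plan is to fix a plane embedding of $G$, so that the $6$-cycle $C=v_1v_2\cdots v_6$ is a Jordan curve splitting the plane into two regions, and then to analyze $C$ according to whether it has a chord and whether it bounds a face. Throughout I will use that $G$ is cubic and bipartite (so the six vertices of $C$ are properly $2$-coloured, say $v_1,v_3,v_5$ against $v_2,v_4,v_6$), together with Lemma~\ref{square}, Lemma~\ref{eulercac}, and the fact that $\zeta(G)=4$ since $G\notin\mathcal{T}$.

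First I would dispose of chords. Because $G$ is bipartite, no two vertices of the same colour are adjacent, so the only possible chords of $C$ join antipodal vertices, e.g.\ $v_1v_4$. Such a chord splits $C$ into the two $4$-cycles $v_1v_2v_3v_4$ and $v_4v_5v_6v_1$, each facial by Lemma~\ref{square}; hence the side of $C$ carrying the chord consists of exactly two square faces, so $C$ is a dual-square, giving case (ii). If $C$ has no chord but is a facial cycle, then the face it bounds is a $6$-gon, so $C$ is a hexagonal face, giving case (i); note a facial $C$ cannot carry a chord on the far side, since that would force two faces to share three edges, contradicting Lemma~\ref{eulercac}(iii).

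The substantive case is that $C$ has no chord and is not facial, where I want to force a square-cap. Here I would run an Euler-type count on one region $R$ bounded by $C$. Writing $p$ for the number of vertices strictly inside $R$ and $k$ for the number of vertices of $C$ whose third (non-$C$) edge enters $R$, a degree count gives the number of interior edges $q=(3p+k)/2$, and then $V-E+F=1$ on the disk gives $F=(p+k)/2+1$ interior faces with total boundary length $\sum_f|f|=6+3p+k$. Since every face is a $4$- or $6$-gon, solving these relations yields the clean identity that $R$ contains exactly $k$ square faces and $(p-k+2)/2$ hexagonal faces; summing over both regions recovers $k+k'=6$, consistent with $G$ having six squares. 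I would then pick the region with $k\le 3$ and treat $k=0,1,2,3$ in turn. The value $k=0$ forces $p=0$ and a single hexagonal face, i.e.\ $C$ facial, excluded here. The values $k=1,2$ are impossible: with no chord the inward edges all run to interior vertices, so if $p\ge 1$ they form a $1$- or $2$-edge cut isolating the interior, contradicting the $3$-edge-connectivity of $G$, while $p=0$ is ruled out either by non-integrality of $F$ (for $k=1$) or because two inward edges with no interior endpoint would have to be a chord (for $k=2$).

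The main obstacle is $k=3$, which is exactly where the hypothesis $G\notin\mathcal{T}$ must be used. Deleting the three edges entering $R$ isolates the $p$ interior vertices, but this is only a $3$-edge cut, which $3$-edge-connectivity alone permits. The key observation is that the interior subgraph carries $(3p-3)/2$ edges on $p$ vertices, hence contains a cycle precisely when $p\ge 3$; in that case the three edges form a \emph{cyclic} $3$-edge cut, forcing $\zeta(G)\le 3$ and contradicting $\zeta(G)=4$. Since the identity forces $p$ odd, the only survivor is $p=1$, so $R$ consists of a single degree-$3$ vertex $v_0$ joined to three vertices of $C$ and bounding three square faces. As each face at $v_0$ is a $4$-cycle, consecutive neighbours of $v_0$ lie at distance two along $C$, so $v_0$ is joined to $v_1,v_3,v_5$ and $C$ is a square-cap, giving case (iii). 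I expect the bookkeeping in the counting step, and the verification that the three edges genuinely separate a cyclic part, to be the most delicate points.
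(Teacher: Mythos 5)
Your argument is correct, but it cannot be ``the same as the paper's'' for the simple reason that the paper offers no proof of this lemma at all: it is imported verbatim from Jiang and Zhang \cite{jiang11}, so your write-up is a self-contained replacement for an external citation. The structure you use --- dispose of chords via bipartiteness and Lemma~\ref{square} (an antipodal chord forces two facial squares sharing an edge, i.e.\ a dual-square), then run the Euler count $q=(3p+k)/2$, $F=(p+k)/2+1$, $\sum_f|f|=6+3p+k$ on a region of a chordless non-facial $C$ to conclude that the region contains exactly $k$ squares and $(p-k+2)/2$ hexagons --- checks out, and the case analysis on the side with $k\le 3$ is exhaustive: $k=0$ contradicts non-faciality or connectivity, $k=1,2$ contradict $3$-edge-connectedness (or integrality/chordlessness when $p=0$), and for $k=3$ the parity constraint plus the cyclic $3$-edge-cut obstruction (the interior carries $(3p-3)/2\ge p$ edges once $p\ge 3$, while the other side contains the cycle $C$, so $\zeta(G)\le 3$ would follow, contradicting Lemma~\ref{eulercac}(iv)) leaves only $p=1$, which is forced to be a square-cap since all three interior faces must be squares and hence cut off arcs of length two. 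Two minor points you could tighten but which do not affect validity: a second chord is impossible because the two facial $4$-cycles it would create share two edges, violating Lemma~\ref{eulercac}(iii) (you treat the chord as unique without saying why, though the conclusion ``dual-square'' holds regardless); and in the $k=3$, $p=3$ subcase the three interior edges would form a triangle, which bipartiteness already forbids, so the cyclic-connectivity hypothesis is genuinely needed only from $p=5$ on. What your route buys is precisely that the paper's structural backbone (Theorem~\ref{struct6} and everything downstream) no longer rests on an unexamined external result; the cost is the Euler--discharging bookkeeping, which you have carried out correctly.
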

        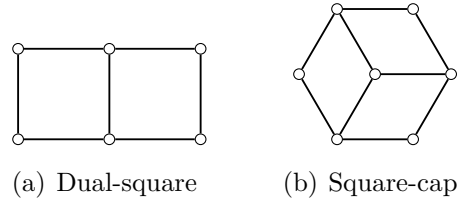
\begin{figure}
            \centering
            \subfigure[Dual-square]{
                \begin{tikzpicture}[scale=1.2]
                    \tikzstyle{vertex}=[draw,circle,minimum size=4pt,inner sep=0pt]
                    \tikzstyle{edge} = [draw,line width=0.7pt,-,black]
                    \node[vertex] (v1) at (0,0)  {};
                    \node[vertex] (v2) at (1,0)  {};
                    \node[vertex] (v3) at (2,0)  {};
                    \node[vertex] (v4) at (2,1)  {};
                    \node[vertex] (v5) at (1,1)  {};
                    \node[vertex] (v6) at (0,1)  {};
                    \draw[edge] (v1)  -- (v2);
                    \draw[edge] (v2)  -- (v3);
                    \draw[edge] (v3)  -- (v4);
                    \draw[edge] (v4)  -- (v5);
                    \draw[edge] (v5)  -- (v6);
                    \draw[edge] (v6)  -- (v1);
                    \draw[edge] (v2)  -- (v5);
                    \label{typeb1}
                \end{tikzpicture}}
                \hspace{4ex}
            \subfigure[Square-cap]{
                \begin{tikzpicture}[scale=0.2]
                    \tikzstyle{vertex}=[draw,circle,minimum size=4pt,inner sep=0pt]
                    \tikzstyle{edge} = [draw,line width=0.7pt,-,black]
                    \node[vertex] (v0) at (0,0)  {};
                    \node[vertex] (v1) at (5,0)  {};
                    \node[vertex] (v2) at (2.5,4.32)   {};
                    \node[vertex] (v3) at (-2.5,4.32)   {};
                    \node[vertex] (v4) at (-5,0)  {};
                    \node[vertex] (v5) at (-2.5,-4.32)   {};
                    \node[vertex] (v6) at (2.5,-4.32)  {};
                    \draw[edge] (v1)  -- (v2);
                    \draw[edge] (v2)  -- (v3);
                    \draw[edge] (v3)  -- (v4);
                    \draw[edge] (v4)  -- (v5);
                    \draw[edge] (v5)  -- (v6);
                    \draw[edge] (v6)  -- (v1);
                    \draw[edge] (v0)  -- (v1);
                    \draw[edge] (v0)  -- (v3);
                    \draw[edge] (v0)  -- (v5);
                    \label{typeb2}
                \end{tikzpicture}}
            \caption{Two types of 6-cycle in (4,6)-fullerene graphs.}
            \label{typebf}
        \end{figure}
        \begin{lemma}\label{typegc}
            For a (4,6)-fullerene graph $G$ with $h\geqslant 2$, if $G$ has a square-cap subgraph, then $\zeta(G)=3$ and  $G$ is thus tubular.
        \end{lemma}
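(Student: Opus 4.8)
The plan is to exhibit a cyclic $3$-edge-cut directly. Since $\zeta(G)\in\{3,4\}$ always holds and, by Lemma~\ref{eulercac}(iv), $\zeta(G)=3$ is equivalent to $G\in\mathcal{T}$, it suffices to find three edges whose deletion splits $G$ into two parts, each carrying a cycle; this will force $\zeta(G)\leqslant 3$, hence $\zeta(G)=3$, and tubularity follows. So I would not try to describe the global shape of $G$ at all, but only localize the argument around the given square-cap.

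First I would fix the labelling of the square-cap as in Figure~\ref{typebf}(b): a centre $v_0$ adjacent to $v_1,v_3,v_5$, together with the boundary hexagon $v_1v_2v_3v_4v_5v_6$. The key observation is a degree count. Because $G$ is cubic, the four vertices $v_0,v_1,v_3,v_5$ already have degree $3$ inside the cap, so they have no further neighbours in $G$; meanwhile each of $v_2,v_4,v_6$ has degree $2$ inside the cap and therefore carries exactly one additional edge of $G$. These extra edges cannot join $v_2,v_4,v_6$ to one another, since $v_2\sim v_4$ (or $v_2\sim v_6$, or $v_4\sim v_6$) would create a triangle, impossible as every $(4,6)$-fullerene graph is bipartite. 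Hence the three additional edges, say $f_2=v_2w_2$, $f_4=v_4w_4$, $f_6=v_6w_6$, all lead to vertices outside $\{v_0,\dots,v_6\}$, and they are the only edges joining the cap to the rest of the graph. These three edges are distinct even if $w_2,w_4,w_6$ happen to coincide, because their cap-endpoints are distinct.

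It remains to check that cutting $\{f_2,f_4,f_6\}$ is genuinely cyclic. The cap side automatically contains the $6$-cycle $v_1v_2v_3v_4v_5v_6$, so the whole point reduces to showing that the complementary part $H:=G-\{v_0,\dots,v_6\}$ still contains a cycle; this is where the hypothesis $h\geqslant 2$ enters, and it is the only delicate step. I would handle it by counting: by Lemma~\ref{eulercac}(i), $n=2h+8$ and $m=3h+12$, while the cap spans $9$ internal edges and the three cut edges are also removed, so $H$ has $n-7=2h+1$ vertices and $m-12=3h$ edges. Since $h\geqslant 2$ gives $3h>2h=(2h+1)-1$, $H$ has more edges than a forest on its vertex set could, so it is not a forest and contains a cycle. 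Thus $\{f_2,f_4,f_6\}$ is a cyclic $3$-edge-cut, and the conclusion $\zeta(G)=3$ and $G\in\mathcal{T}$ follows.

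The main obstacle is precisely this last cycle check: without it the argument would misfire on the cube $T_0$ (the case $h=0$), where the three external edges all land on a single vertex and the complementary part is a lone vertex with no cycle, so the cut is not cyclic. The edge-count above is the clean way to see that $h\geqslant 2$ excludes exactly this degenerate possibility, and it avoids any case analysis on whether $w_2,w_4,w_6$ are distinct.
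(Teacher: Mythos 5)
Your proof is correct and follows the same route the paper intends: the three edges leaving the square-cap form a cyclic $3$-edge-cut, so $\zeta(G)=3$ and tubularity follows from Lemma~\ref{eulercac}(iv). The paper compresses all of this into a one-line appeal to that lemma; your write-up simply supplies the omitted verification (the degree count at $v_2,v_4,v_6$ and the edge-count showing the complementary side is not a forest when $h\geqslant 2$), which is exactly the content the paper's ``direct inference'' presupposes.
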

        \begin{proof}
            This is a direct inference from Lemma~\ref{eulercac}(iv).
        \end{proof}
        By Lemma~\ref{typegc}, we can strengthen Lemma~\ref{6cyclejiang} as follows.
        \begin{lemma}\label{6cycle1}
            For a (4,6)-fullerene graph $G\notin\mathcal{T}$ other than the cube, if $C$ is a 6-cycle of $G$, then $C$ is either a hexagonal facial cycle or a dual-square.
        \end{lemma}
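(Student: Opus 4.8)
The plan is to begin from the trichotomy furnished by Lemma~\ref{6cyclejiang} and then eliminate the square-cap alternative by invoking Lemma~\ref{typegc}. Since $G\notin\mathcal{T}$, Lemma~\ref{6cyclejiang} already guarantees that any 6-cycle $C$ of $G$ is a hexagonal facial cycle, a dual-square, or a square-cap; the entire content of the present lemma is to discard the third option, so this is where the argument must focus.

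The decisive preliminary step is to check that the hypothesis of Lemma~\ref{typegc}, namely $h\geqslant 2$, actually holds under our assumptions. By Lemma~\ref{eulercac}(ii) we have $h\neq 1$, so the only case that could fail is $h=0$. I would dispose of this case using Lemma~\ref{eulercac}(i): when $h=0$ we get $n=8$ and $m=12$, and a plane cubic graph on $8$ vertices all of whose faces are squares (the degenerate $t=0$ tube) must be the ordinary cube. Since $G$ is assumed to be \emph{other than the cube}, $h=0$ is excluded, and therefore $h\geqslant 2$.

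With $h\geqslant 2$ in hand, the remainder is a short contradiction argument. Suppose some 6-cycle $C$ were a square-cap; then $G$ would contain a square-cap subgraph, and Lemma~\ref{typegc} would force $\zeta(G)=3$, i.e. $G\in\mathcal{T}$, contradicting the standing hypothesis $G\notin\mathcal{T}$. Hence no 6-cycle of $G$ can be a square-cap, and the trichotomy of Lemma~\ref{6cyclejiang} collapses to the desired dichotomy: every 6-cycle is either a hexagonal facial cycle or a dual-square.

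I expect the only genuinely delicate point to be the exclusion of the case $h=0$. The rest is an immediate chaining of the two preceding lemmas, but one must be careful because Lemma~\ref{typegc} is stated only for $h\geqslant 2$, and the cube---which is the degenerate $T_0$ yet is \emph{not} a member of $\mathcal{T}$---is precisely the configuration that would otherwise slip through. Identifying this boundary case, and recognizing that it is exactly what the hypothesis ``other than the cube'' rules out, is the conceptual crux of the proof.
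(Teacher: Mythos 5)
Your proof is correct and follows essentially the same route as the paper, which obtains this lemma directly by combining the trichotomy of Lemma~\ref{6cyclejiang} with Lemma~\ref{typegc} to rule out the square-cap case. Your explicit handling of the boundary cases $h=0$ (the cube, excluded by hypothesis) and $h=1$ (impossible by Lemma~\ref{eulercac}(ii)) simply spells out details the paper leaves implicit.
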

        We call the graph in Figure~\ref{tubularfigure} ``a square-cap with 2 hexagon-layers". Generally, we have ``a square cap with $k$ hexagon-layers", where $k\geqslant1$.
        \begin{lemma}\label{6cycle2}
            Given a tubular (4,6)-fullerene graph $T_t$ with $t\geqslant 1$, if $C$ is a 6-cycle of $T_t$, then $C$ is either\\
            (i) a hexagonal facial cycle, or\\
            (ii) a dual-square, or\\
            (iii) a square-cap, or\\
            (iv) the boundary of a square-cap with hexagon-layers.
        \end{lemma}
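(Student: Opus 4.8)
The plan is to work directly from the concentric-layer description of $T_t$ rather than from any ambient fullerene. First I would fix notation for the structure: label the rings $R_0,R_1,\dots,R_t$ running from one cap to the other, where $R_0$ and $R_t$ are the boundary hexagons of the two square-caps, three hexagonal faces sit between each pair $R_i,R_{i+1}$, and $c_1,c_2$ denote the two cap centres. From the construction I would record the local facts the proof rests on: each $R_i$ is an induced $6$-cycle; the vertical edges joining $R_i$ to $R_{i+1}$ number exactly three and join alternate (``down'') vertices of $R_i$ to alternate (``up'') vertices of $R_{i+1}$; and every vertex except $c_1,c_2$ carries exactly one non-ring edge (a vertical edge, or a cap edge when it lies on $R_0$ or $R_t$), while $c_1,c_2$ each carry three cap edges and no ring edge. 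I would also introduce the level function $\ell$ sending a vertex to its ring index (with $\ell(c_1)=-1$, $\ell(c_2)=t+1$), so that ring edges preserve $\ell$ and every non-ring edge changes it by $\pm1$.

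The core is a short counting step for an arbitrary $6$-cycle $C$. Partition $E(C)$ into ring and non-ring edges and let $s$ be the number of non-ring edges. If $C$ avoids both centres, then each of its six vertices meets at most one non-ring edge, so $2s\le 6$, i.e. $s\le 3$; since $C$ is closed and each non-ring edge shifts $\ell$ by $\pm1$, the number $s$ is even, hence $s\in\{0,2\}$. When $s=0$ the cycle lies inside a single ring and equals some $R_i$: for $i\in\{0,t\}$ this is a square-cap boundary (case (iii)), and for $0<i<t$ it is the boundary of a square-cap with $i$ hexagon-layers (case (iv)). When $s=2$ the two non-ring edges are vertical, and because $C$ then reaches only two consecutive levels they lie at a single interface $R_\ell$--$R_{\ell+1}$; a parity check on the ring forces each of the two connecting ring-paths to have length $2$, so $C$ bounds a hexagonal face (case (i)).

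It remains to treat a $6$-cycle meeting a cap centre. I would first rule out that $C$ contains both $c_1$ and $c_2$: any $c_1$--$c_2$ path traverses all $t+2$ levels, so has length at least $t+2$, and since a cap-neighbour has no edge into the adjacent hexagon-layer an extra ring edge is forced; two disjoint such paths then exceed length $6$ for every $t\ge 1$. Hence $C$ meets exactly one centre, say $c_1$, using two of its three cap edges to two cap-neighbours in $R_0$. The complementary length-$4$ path between these neighbours cannot dip into $R_1$ and return — again because cap-neighbours have no downward edge, so such a detour would revisit a vertex — and therefore runs inside $R_0$, making $C$ the boundary of two adjacent cap-squares, i.e. a dual-square (case (ii)). Combining the three cases yields the stated classification.

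I expect the main obstacle to be exactly the centre case together with the $s=2$ verification: both hinge on the alternation of ``up'' and ``down'' vertices around each ring, and the delicate point is to exclude a $6$-cycle that leaves a cap (or crosses an interface) and sneaks back through the neighbouring hexagon-layer. Turning this alternation into precise parity bookkeeping, rather than reading it off the picture, is where the real work lies; everything else is controlled by the elementary bound $s\le 3$, which comes solely from $C$ having only six vertices.
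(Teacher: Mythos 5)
Your proof is correct and follows essentially the same route as the paper's: both rest on the concentric-layer structure of $T_t$ and the observation that a $6$-cycle crossing between consecutive layers must do so exactly twice, with the two connecting arcs forced (by a distance/parity argument in the rings) to have length $2$, so that the cycle bounds a hexagonal face. The only organizational difference is that you extend the level function to the cap centres and treat the centre-containing (dual-square) and single-ring cases explicitly, whereas the paper folds all cap edges into the end layers $L_0,L_t$, dismisses the within-layer case as ``easily checked,'' and obtains the even crossing number from the edge-cut property of the bridges $B_i$ rather than from your bound $s\in\{0,2\}$.
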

        \begin{figure}[h]
            \centering
                \begin{tikzpicture}[scale=0.1]
                    \tikzstyle{vertex}=[draw,circle,minimum size=4pt,inner sep=0pt]
                    \tikzstyle{edge} = [draw,line width=0.7pt,-,black]
                    \node[vertex] (v0) at (0,0)  {};
                    \node[vertex] (v1) at (5,0)  {};
                    \node[vertex] (v2) at (2.5,4.32)   {};
                    \node[vertex] (v3) at (-2.5,4.32)   {};
                    \node[vertex] (v4) at (-5,0)  {};
                    \node[vertex] (v5) at (-2.5,-4.32)   {};
                    \node[vertex] (v6) at (2.5,-4.32)  {};
                    \node[vertex] (v7) at (10,0)  {};
                    \node[vertex] (v8) at (5,8.64)   {};
                    \node[vertex] (v9) at (-5,8.64)   {};
                    \node[vertex] (v10) at (-10,0)  {};
                    \node[vertex] (v11) at (-5,-8.64)   {};
                    \node[vertex] (v12) at (5,-8.64)  {};
                    \node[vertex] (v13) at (15,0)  {};
                    \node[vertex] (v14) at (7.5,12.96)   {};
                    \node[vertex] (v15) at (-7.5,12.96)   {};
                    \node[vertex] (v16) at (-15,0)  {};
                    \node[vertex] (v17) at (-7.5,-12.96)   {};
                    \node[vertex] (v18) at (7.5,-12.96)  {};
                    \draw[edge] (v1)  -- (v2);
                    \draw[edge] (v2)  -- (v3);
                    \draw[edge] (v3)  -- (v4);
                    \draw[edge] (v4)  -- (v5);
                    \draw[edge] (v5)  -- (v6);
                    \draw[edge] (v6)  -- (v1);
                    \draw[edge] (v0)  -- (v1);
                    \draw[edge] (v0)  -- (v3);
                    \draw[edge] (v0)  -- (v5);
                    \draw[edge] (v7)  -- (v8);
                    \draw[edge] (v8)  -- (v9);
                    \draw[edge] (v9)  -- (v10);
                    \draw[edge] (v10)  -- (v11);
                    \draw[edge] (v11)  -- (v12);
                    \draw[edge] (v12)  -- (v7);
                    \draw[edge] (v2)  -- (v8);
                    \draw[edge] (v4)  -- (v10);
                    \draw[edge] (v6)  -- (v12);
                    \draw[edge] (v13)  -- (v14);
                    \draw[edge] (v14)  -- (v15);
                    \draw[edge] (v15)  -- (v16);
                    \draw[edge] (v16)  -- (v17);
                    \draw[edge] (v17)  -- (v18);
                    \draw[edge] (v18)  -- (v13);
                    \draw[edge] (v7)  -- (v13);
                    \draw[edge] (v9)  -- (v15);
                    \draw[edge] (v11)  -- (v17);
                \end{tikzpicture}
                \caption{A square-cap with 2 hexagon-layers.}
                \label{tubularfigure}
        \end{figure}
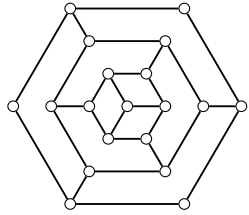
        \begin{proof}
            We first give partitions of $E(T_t)$ and $V(T_t)$ according to the concentric layers of $T_t$. We first fix one of the two square-caps, and the 0-layer  is the set of edges of the fixed square-cap. The 1-layer is the boundary edges of the square-cap with~1 hexagon-layer. In general, we define the $i$-layer as the boundary edges of the  square-cap with $i$ hexagon-layers for $1\leqslant i\leqslant t-1$. The $t$-layer of  $T_t$ is defined as the set of edges of the other square-cap in $T_t$. Thus, the $i$-layer is an edge set $L_i$, $0\leqslant i\leqslant t$. We define $\widetilde{L_i}$ as the set of vertices  incident with edges in $L_i$. Then $V(T_t)=\cup_{i=0}^t \widetilde{L}_i$. We call the set of edges linking $\widetilde{L}_i$ and $\widetilde{L}_{i+1}$ the $i$-bridge $B_i$, $0\leqslant i\leqslant t-1$. Thus $E(T_t)=(\cup_{i=0}^t L_i) \cup (\cup_{j=0}^{t-1} B_j)$.
            \par
            If $E(C)\subseteq L_i$ for some $0\leqslant i\leqslant t$, one can easily check this result. Otherwise, $E(C)\cap B_i\neq\emptyset $ for some $i$, $0\leqslant i\leqslant t-1$. We claim that $C$ is a hexagonal facial cycle.
            \par
            Let $C=u_1u_2u_3u_4u_5u_6u_1$. Without loss of generality, we suppose that $u_1u_2\in  E(C)\cap B_i$. Since $B_i$ is an edge-cut, $C$ has  another edge in $B_i$, say $vw$, where $v,w\in V(C)\backslash\{u_1,u_2\}$. Further we assume that $u_1, v\in \widetilde{L}_i$ and $u_2, w\in \widetilde{L}_{i+1}$. We now consider the distance $d(u, v)$ between  vertices $u$ and $v$ of $T_t$ (the length of a shortest path between $u$ and $v$). It is obvious that $d(u_1,v)=d(u_2,w)=2$. Since the common neighbour of $u_1$ and $v$ (respectively, $u_2$ and $w$) is unique, the 2-path linking $u_1$ and $v$ (respectively, $u_2$ and $w$) is also unique and thus located in $L_i$ ($L_{i+1}$, respectively). This implies $|E(C)\cap L_i|=|E(C)\cap L_{i+1}|=2$. Hence we have $w=u_4$, $v=u_5$ and $C$ is a hexagonal facial cycle.
        \end{proof}

        Combining Lemmas~\ref{6cycle1} and~\ref{6cycle2}, we have
        \begin{theorem}[Structure of 6-cycles]\label{struct6}
            Let $G$ be a (4,6)-fullerene graph and $C$ a 6-cycle in $G$. Then $C$ is a hexagonal facial cycle, a dual-square, a square-cap or a square-cap with hexagon-layers.
        \end{theorem}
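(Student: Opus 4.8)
The plan is to reduce the theorem to the two cases already settled, exploiting the dichotomy that every (4,6)-fullerene graph $G$ satisfies $\zeta(G)=3$ or $\zeta(G)=4$, together with Lemma~\ref{eulercac}(iv), which identifies $\zeta(G)=3$ with $G\in\mathcal{T}$. Since the four structural types named in the theorem are precisely the union of the types occurring in Lemmas~\ref{6cycle1} and~\ref{6cycle2}, a clean case split on whether $G$ is tubular should suffice, and the only genuine work is to check that the hypotheses of the two lemmas jointly exhaust all (4,6)-fullerene graphs.

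First I would dispose of the tubular case. If $\zeta(G)=3$, then $G=T_t$ for some $t\geqslant 1$, and Lemma~\ref{6cycle2} already asserts that every 6-cycle $C$ of $T_t$ is a hexagonal facial cycle, a dual-square, a square-cap, or the boundary of a square-cap with hexagon-layers. All four are among the types claimed in the theorem, so this case is immediate. Next I would treat the non-tubular case $\zeta(G)=4$, equivalently $G\notin\mathcal{T}$. For every such $G$ other than the cube, Lemma~\ref{6cycle1} says each 6-cycle is a hexagonal facial cycle or a dual-square, and both are among the listed types.

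The remaining graph is the cube $T_0$, which is excluded from the hypothesis of Lemma~\ref{6cycle1}; this is the one delicate point, and I expect it to be the main obstacle. To close it I would fall back on the weaker Lemma~\ref{6cyclejiang}, which is valid for the cube since the cube is not in $\mathcal{T}$: it guarantees that every 6-cycle of the cube is a hexagonal facial cycle, a dual-square, or a square-cap. As the cube has $h=0$ and hence no hexagonal face, the first option cannot occur, so every 6-cycle of the cube is a dual-square or a square-cap, again among the four types. Alternatively, one may enumerate the sixteen 6-cycles of the cube directly and check that the twelve obtained by deleting the two endpoints of an edge are dual-squares, while the four obtained by deleting an antipodal pair are square-caps.

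Collecting the cases, every 6-cycle of an arbitrary (4,6)-fullerene graph is one of the four listed structures, which is the assertion of the theorem. The conceptual content is carried entirely by Lemmas~\ref{6cycle1}, \ref{6cycle2}, and~\ref{6cyclejiang}; the point requiring care is that the strengthening in Lemma~\ref{6cycle1} drops the square-cap possibility at the cost of excluding the cube, so the degenerate case $T_0$ must be recovered from the original form of Lemma~\ref{6cyclejiang} (where square-caps are permitted) rather than from its strengthening.
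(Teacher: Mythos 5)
Your proof is correct and takes essentially the same route as the paper, which simply combines Lemmas~\ref{6cycle1} and~\ref{6cycle2} via the dichotomy $\zeta(G)=4$ versus $\zeta(G)=3$. The one point where you go beyond the paper is the explicit treatment of the cube $T_0$, which is excluded from the hypotheses of both lemmas and which the paper's one-line proof leaves implicit; your recovery of that case from Lemma~\ref{6cyclejiang} (or by direct enumeration of the twelve dual-squares and four square-caps) is a legitimate and welcome filling of that small gap.
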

    \subsection{Local structural properties}
        Lemma~\ref{typegc} is interesting since we can determine the global structure of the graph from its local behavior. Analogous situations can be found in the following two propositions.

        \begin{proposition}\label{lotogll}
            Let $G$ be a (4,6)-fullerene graph that has at least 4 square faces arranged in a line. Then $G$ is either a cube or a hexagonal prism.
        \end{proposition}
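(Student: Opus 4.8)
The plan is to read off the local structure forced by a line of four squares and then let planarity together with the two structural lemmas close up the graph in only two ways. Interpreting ``four squares in a line'' as a ladder, I would label the two boundary paths of the strip by $a_0a_1a_2a_3a_4$ (top) and $b_0b_1b_2b_3b_4$ (bottom), with rungs $a_ib_i$ and the four squares $S_i=a_{i-1}a_ib_ib_{i-1}$ for $1\le i\le 4$. Because $G$ is cubic, the six interior vertices $a_1,a_2,a_3,b_1,b_2,b_3$ already have degree three inside the strip, whereas each of the four corners $a_0,a_4,b_0,b_4$ still needs exactly one further edge. The first task is to record these degree constraints precisely and to note, via Lemma~\ref{eulercac}(iii), that consecutive squares meet in a single rung, so that the ladder vertices are genuinely distinct (apart from a possible global closure discussed below).

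Next I would exploit planarity to track the faces bordering the strip from outside. Walking along the top, at each interior vertex $a_i$ the two rail edges $a_{i-1}a_i$ and $a_ia_{i+1}$ bound the same outer face, so the whole top path $a_0a_1a_2a_3a_4$ lies on one face $F$; symmetrically the bottom path lies on one face $F'$. Since $F$ contains a path of length four and every face of $G$ is a $4$- or $6$-cycle, there is a clean dichotomy: either $a_0=a_4$, in which case the top path is itself a $4$-cycle and Lemma~\ref{square} makes it a square face (and likewise a degree count forces $b_0=b_4$ and a bottom square), giving exactly eight vertices and six squares, i.e.\ the cube; or $a_0\ne a_4$, in which case $F$ must be a hexagon $a_0a_1a_2a_3a_4x$ closing through a single new vertex $x$ adjacent to $a_0$ and $a_4$, and similarly $F'=b_0b_1b_2b_3b_4y$.

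In the open case the third edges of $a_0,a_4,b_0,b_4$ are now spent, leaving $x$ and $y$ each needing one more neighbour. I would then examine the two end faces: the face $R$ on the far side of the rung $a_4b_4$ contains the path $x\,a_4\,b_4\,y$, and the face $L$ beyond $a_0b_0$ contains $x\,a_0\,b_0\,y$. If $R$ is a square then $x\sim y$, and because the $4$-cycle $xa_0b_0y$ is then forced to be facial by Lemma~\ref{square}, $L$ is a square as well; counting faces yields six squares, two hexagons and twelve vertices, which is precisely the hexagonal prism. It remains to exclude the possibility that $R$ is a hexagon: I would show that tracing the faces around $x$ and $y$ then forces both $R$ and $L$ to run through the same two new vertices $p,q$ and to share the three edges $px,pq,qy$, contradicting Lemma~\ref{eulercac}(iii).

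The main obstacle is this last elimination, together with the bookkeeping that keeps all the introduced vertices distinct: the argument must rule out every degenerate identification of $x,y,p,q$ with existing ladder vertices (most of which are already saturated, so the clashes are easy, but they must be stated) and must use the planar rotation at $x$ and $y$ correctly to pin down which edges $R$ and $L$ are forced to share. Finally, since the hypothesis only asserts \emph{at least} four squares in a line, I would remark that applying the above to any four consecutive squares of the line already determines $G$ completely, so no separate treatment of longer lines is needed.
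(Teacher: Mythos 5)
Your argument is correct, and while it opens exactly as the paper's proof does, it closes in a genuinely different way. Both proofs begin by observing that the top and bottom rails of the four-square ladder each lie on a single face, which (bipartiteness/face sizes) must be a hexagon closing through one new vertex on each side --- your $x$ and $y$ are the paper's $x_1$ and $x_2$, and your degenerate case $a_0=a_4$ is the cube that the paper simply sets aside at the outset. The divergence is in forcing the final edge $xy$: the paper looks at the $6$-cycle $x_1v_1u_1x_2u_5v_5$, notes that four of its vertices are already saturated, and invokes the classification of $6$-cycles (Theorem~\ref{struct6}) to conclude it must be a dual-square, i.e.\ $x_1x_2\in E(G)$. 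You instead run a face-by-face analysis of the two end faces $R$ and $L$: if $R$ is a square you get the prism directly via Lemma~\ref{square}, and if $R$ is a hexagon the rotations at $x$ and $y$ force $L$ and $R$ to share the three edges $px,pq,qy$, contradicting Lemma~\ref{eulercac}(iii). Your route is more elementary --- it needs only the face-size constraint, Lemma~\ref{square}, and the two-faces-share-at-most-one-edge fact, and is independent of the $6$-cycle classification --- at the cost of the extra case split and the distinctness bookkeeping you flag (all of which does go through: every candidate identification of $p,q$ with ladder vertices fails on degree grounds). The paper's route is shorter but leans on the heavier Theorem~\ref{struct6}. Both are sound; the one genuine caution for your write-up is to make the "$R$ hexagonal" elimination fully explicit, since that is where all the remaining content sits.
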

        \begin{figure}[ht]
            \centering
            \subfigure[]{
                \begin{tikzpicture}[scale=1]
                    \tikzstyle{vertex}=[draw,circle,minimum size=4pt,inner sep=0pt]
                    \tikzstyle{edge} = [draw,line width=0.7pt,-,black]
                    \node[vertex] (v1) at (0,1)[label=above:$v_1$]{};
                    \node[vertex] (v2) at (1,1)[label=above:$v_2$]{};
                    \node[vertex] (v3) at (2,1)[label=above:$v_3$]{};
                    \node[vertex] (v4) at (3,1)[label=above:$v_4$]{};
                    \node[vertex] (v5) at (4,1)[label=above:$v_5$]{};
                    \node[vertex] (v6) at (0,0)[label=below:$u_1$]{};
                    \node[vertex] (v7) at (1,0)[label=below:$u_2$] {};
                    \node[vertex] (v8) at (2,0)[label=below:$u_3$] {};
                    \node[vertex] (v9) at (3,0)[label=below:$u_4$]  {};
                    \node[vertex] (v10) at (4,0) [label=below:$u_5$]{};
                    \draw[edge] (v1)  -- (v2);
                    \draw[edge] (v2)  -- (v3);
                    \draw[edge] (v3)  -- (v4);
                    \draw[edge] (v4)  -- (v5);
                    \draw[edge] (v6)  -- (v7);
                    \draw[edge] (v7)  -- (v8);
                    \draw[edge] (v8)  -- (v9);
                    \draw[edge] (v9)  -- (v10);
                    \draw[edge] (v1)  -- (v6);
                    \draw[edge] (v2)  -- (v7);
                    \draw[edge] (v3)  -- (v8);
                    \draw[edge] (v4)  -- (v9);
                    \draw[edge] (v5)  -- (v10);
                    \label{ltgfc1}
                \end{tikzpicture}}
             \hspace{3ex}
             \subfigure[]{
                \begin{tikzpicture}[scale=0.8]
                    \tikzstyle{vertex}=[draw,circle,minimum size=4pt,inner sep=0pt]
                    \tikzstyle{edge} = [draw,line width=0.7pt,-,black]
                    \node[vertex] (v1) at (0,1)[label=above:$v_1$]{};
                    \node[vertex] (v2) at (1,1){};
                    \node[vertex] (v3) at (2,1){};
                    \node[vertex] (v4) at (3,1){};
                    \node[vertex] (v5) at (4,1)[label=above:$v_5$]{};
                    \node[vertex] (v6) at (0,0)[label=below:$u_1$]{};
                    \node[vertex] (v7) at (1,0){};
                    \node[vertex] (v8) at (2,0){};
                    \node[vertex] (v9) at (3,0){};
                    \node[vertex] (v10) at (4,0) [label=below:$u_5$]{};
                    \node[vertex] (x1) at (2,1.8)  [label=above:$x_1$]{};
                    \node[vertex] (x2) at (2,-1) [label=above:$x_2$]{};
                    \draw[edge] (v1)  -- (v2);
                    \draw[edge] (v2)  -- (v3);
                    \draw[edge] (v3)  -- (v4);
                    \draw[edge] (v4)  -- (v5);
                    \draw[edge] (v6)  -- (v7);
                    \draw[edge] (v7)  -- (v8);
                    \draw[edge] (v8)  -- (v9);
                    \draw[edge] (v9)  -- (v10);
                    \draw[edge] (v1)  -- (v6);
                    \draw[edge] (v2)  -- (v7);
                    \draw[edge] (v3)  -- (v8);
                    \draw[edge] (v4)  -- (v9);
                    \draw[edge] (v5)  -- (v10);
                    \draw[edge] (v1)  -- (x1);
                    \draw[edge] (v5)  -- (x1);
                    \draw[edge] (v6)  -- (x2);
                    \draw[edge] (v10)  -- (x2);
                    \label{ltgfc2}
                \end{tikzpicture}}
             \hspace{3ex}
             \subfigure[]{
                \begin{tikzpicture}[scale=0.12]
                            \tikzstyle{vertex}=[draw,circle,minimum size=4pt,inner sep=0pt]
                            \tikzstyle{edge} = [draw,line width=0.7pt,-,black]
                            \node[vertex] (v1) at (5,0)  {};
                            \node[vertex] (v2) at (2.5,4.32)   {};
                            \node[vertex] (v3) at (-2.5,4.32)   {};
                            \node[vertex] (v4) at (-5,0)  {};
                            \node[vertex] (v5) at (-2.5,-4.32)   {};
                            \node[vertex] (v6) at (2.5,-4.32)  {};
                            \node[vertex] (v7) at (10,0)  {};
                            \node[vertex] (v8) at (5,8.64)   {};
                            \node[vertex] (v9) at (-5,8.64)   {};
                            \node[vertex] (v10) at (-10,0)  {};
                            \node[vertex] (v11) at (-5,-8.64)   {};
                            \node[vertex] (v12) at (5,-8.64)  {};
                            \draw[edge] (v1)  -- (v2);
                            \draw[edge] (v2)  -- (v3);
                            \draw[edge] (v3)  -- (v4);
                            \draw[edge] (v4)  -- (v5);
                            \draw[edge] (v5)  -- (v6);
                            \draw[edge] (v6)  -- (v1);
                            \draw[edge] (v7)  -- (v8);
                            \draw[edge] (v8)  -- (v9);
                            \draw[edge] (v9)  -- (v10);
                            \draw[edge] (v10)  -- (v11);
                            \draw[edge] (v11)  -- (v12);
                            \draw[edge] (v12)  -- (v7);
                            \draw[edge] (v1)  -- (v7);
                            \draw[edge] (v2)  -- (v8);
                            \draw[edge] (v3)  -- (v9);
                            \draw[edge] (v4)  -- (v10);
                            \draw[edge] (v5)  -- (v11);
                            \draw[edge] (v6)  -- (v12);
                            \label{ltgfc3}
                        \end{tikzpicture}}
            \caption{Graphs for proof of Proposition~\ref{lotogll}.}\label{lotogllc}
        \end{figure}
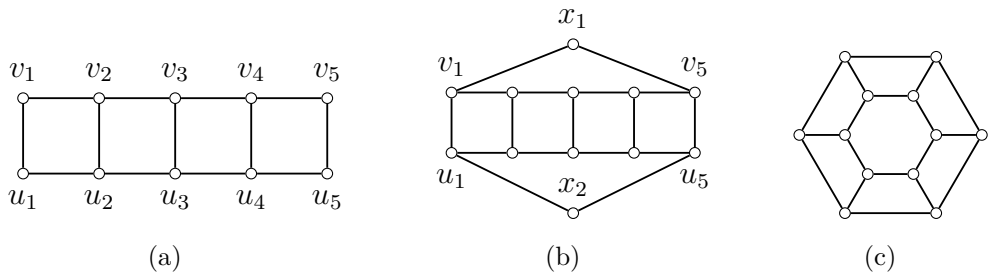
        \begin{proof}
            \par
            Suppose that $G$ is not a cube and has a subgraph as depicted in Figure~\ref{lotogllc}(a). Next we show that $G$ is a hexagonal prism.
            \par
            Since $G$ is a bipartite graph, $v_1v_5\notin E(G)$, and the path $v_1v_2v_3v_4v_5$ lies on the boundary of a hexagonal face of $G$. That means that $G$ has a vertex $x_1$ adjacent to both $v_1$ and $v_5$ such that the 6-cycle $x_1v_1v_2v_3v_4v_5x_1$ is a hexagonal face. Similarly, there is another vertex $x_2$ adjacent to both $u_1$ and $u_5$  such that $x_2u_1u_2u_3u_4u_5x_2$ is a hexagonal face (see Figure~\ref{ltgfc2}). Now, we get a subgraph of $G$ as shown in Figure~\ref{ltgfc2} whose boundary is 6-cycle $x_1v_1u_1x_2u_5v_5x_1$. There are exactly two vertices of degree 2 on this 6-cycle. By Theorem  \ref{struct6} this 6-cycle must be a dual-square and $x_1x_2\in E(G)$. Now $G$ is a hexagonal prism (Figure~\ref{ltgfc3}).
        \end{proof}

        \begin{proposition}\label{lotogl}
             For a (4,6)-fullerene graph $G$, if there are some  three square faces arranged in a line, then the other three square faces are also arranged in a line.
        \end{proposition}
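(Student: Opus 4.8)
The plan is to realise the three collinear squares as a $2\times4$ ladder and then to read off the local picture at its two ends. Write the middle square as $S_2=v_2v_3u_3u_2$, sharing its two opposite edges $v_2u_2,v_3u_3$ with $S_1=v_1v_2u_2u_1$ and $S_3=v_3v_4u_4u_3$; thus the top path is $v_1v_2v_3v_4$, the bottom path is $u_1u_2u_3u_4$, and the rungs are $v_iu_i$. Since $G$ is cubic the middle ladder vertices $v_2,v_3,u_2,u_3$ are saturated, while each corner $v_1,v_4,u_1,u_4$ needs exactly one more edge. Everything hinges on the four faces bordering the ladder: the face $F$ carrying the top path, the face $F'$ carrying the bottom path, the face $H_L$ meeting $S_1$ across the rung $v_1u_1$, and the face $H_R$ meeting $S_3$ across $v_4u_4$; each is a quadrilateral or a hexagon.

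First I would clear away the subcases in which one of these faces is a quadrilateral. If $F$ is a quadrilateral it must be $v_1v_2v_3v_4$, so $v_1v_4\in E(G)$; the face $H$ across $v_1v_4$ then carries the path $u_1v_1v_4u_4$, and if $H$ were a hexagon its two new vertices would put three edges in common with the bottom face $F'$, contradicting Lemma~\ref{eulercac}(iii). Hence $H$ is a quadrilateral, $u_1u_4\in E(G)$, the bottom path closes up (a face by Lemma~\ref{square}), and $G$ consists only of the eight ladder vertices, i.e. $G$ is the cube; the same follows if $F'$ is a quadrilateral. If instead $H_L$ is a quadrilateral, then since its shared edge $v_1u_1$ is opposite in $S_1$ to $v_2u_2$, the squares $H_L,S_1,S_2,S_3$ lie in a line, and Proposition~\ref{lotogll} makes $G$ the cube or the hexagonal prism; likewise for $H_R$. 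In each outcome the six squares fall into two opposite lines of three, so the conclusion holds.

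The substantial case is when $F,F',H_L,H_R$ are all hexagons. One checks that they are pairwise adjacent in the cyclic order $F,H_R,F',H_L$, forming a belt around $S_1S_2S_3$ whose outer rim is an $8$-cycle $C^*=x_1y_1\delta\gamma y_2x_2\beta\alpha$, where $x_1,y_1,x_2,y_2$ are the outer corners of $F,F'$ and $\alpha,\beta,\gamma,\delta$ are the far vertices of $H_L,H_R$; the remaining three squares lie outside $C^*$. I would then induct on $n$: examining the faces just outside $C^*$ (across the rim edges $x_1y_1$, $x_2y_2$, $\alpha\beta$, $\gamma\delta$), either they close immediately into the opposite line of three squares — the minimal instance being the $16$-vertex graph, in which $\alpha x_1y_1\delta$, $\alpha\beta\gamma\delta$, $\beta x_2y_2\gamma$ are patently in a line — or another hexagon belt appears, and replacing the inner cap-and-belt by a single collinear triple of squares produces a strictly smaller (4,6)-fullerene to which the inductive hypothesis applies, with the original outer three squares untouched. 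The hard part is exactly this inductive step: one must confirm, using Lemma~\ref{eulercac}(iii) together with the $6$-cycle classification of Theorem~\ref{struct6}, that the region outside $C^*$ is genuinely another (4,6)-fullerene belt-and-cap and that the surgery yields a valid smaller graph, so that peeling belts terminates and forces the opposite three squares into a line.
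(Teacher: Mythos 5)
Your proposal follows the same route as the paper: both set up the three collinear squares as a $2\times4$ ladder, dispose of the degenerate cases where a neighbouring face is a square (via Proposition~\ref{lotogll} and the cube), show that otherwise the four faces bordering the ladder are hexagons forming a belt with an outer $8$-cycle rim, and then iterate outward until the remaining three squares close up in a line (the paper's Figure~6 is exactly your $16$-vertex base instance). The one genuine difference is how the iteration is closed: the paper simply observes that the new rim ``has a similar structure with $C$'' and re-runs the local argument, whereas you formalize this as an induction on $n$ via a cut-and-paste surgery replacing the cap-and-belt by a bare triple of squares. The step you flag as ``the hard part'' does go through, and more easily than you fear: around the rim the numbers of edges leaving the patch follow the cyclic pattern $0,0,1,1,0,0,1,1$, which is exactly the pattern on the boundary $8$-cycle of three squares in a line, so the glued graph is again a plane cubic graph with six squares and only hexagons otherwise, hence a (4,6)-fullerene with $8$ fewer vertices, and the outer three squares are untouched; one only has to check (via bipartiteness and Lemma~\ref{eulercac}(iii)) that the eight rim vertices are distinct, a check the paper also leaves implicit. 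Two small slips: the cyclic order you wrote for $C^*$ is not right (e.g.\ $x_1y_1$ is not an edge; the rim must contain the closing edges $x_1x_2$ of $F$ and $y_1y_2$ of $F'$, so it reads $x_1x_2\gamma\delta y_2y_1\beta\alpha$ in your labels), and your quadrilateral-$F$ subcase, while correct, could be shortened by quoting Theorem~\ref{struct6} on the $6$-cycle $u_1v_1v_4u_4\ldots$ as the paper does; neither affects the argument.
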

        \begin{figure}[ht]
            \centering
                \subfigure[]{
                    \begin{tikzpicture}[scale=0.07]
                    \tikzstyle{vertex}=[draw,circle,minimum size=4pt,inner sep=0pt]
                    \tikzstyle{edge} = [draw,line width=0.7pt,-,black]
                    \node[vertex] (v0) at (-15,5)[label=above:$v_1$]{};
                    \node[vertex] (v1) at (-5,5)[label=above:$v_2$]{};
                    \node[vertex] (v2) at (5,5)[label=above:$v_3$]{}{};
                    \node[vertex] (v3) at (15,5)[label=above:$v_4$]{};
                    \node[vertex] (v4) at (15,-5)[label=below:$v_5$]{};
                    \node[vertex] (v5) at (5,-5)[label=below:$v_6$]{};
                    \node[vertex] (v6) at (-5,-5)[label=below:$v_7$]{};
                    \node[vertex] (v7) at (-15,-5)[label=below:$v_8$]{};
                    \draw[edge]  (v0) to (v1);
                    \draw[edge]  (v1) to (v2);
                    \draw[edge]  (v2) to (v3);
                    \draw[edge]  (v3) to (v4);
                    \draw[edge]  (v4) to (v5);
                    \draw[edge]  (v5) to (v6);
                    \draw[edge]  (v6) to (v7);
                    \draw[edge]  (v7) to (v0);
                    \draw[edge]  (v1) to (v6);
                    \draw[edge]  (v2) to (v5);
                    \label{ltgp1}
                    \end{tikzpicture}}
                \subfigure[]{
                    \begin{tikzpicture}[scale=0.07]
                    \tikzstyle{vertex}=[draw,circle,minimum size=4pt,inner sep=0pt]
                    \tikzstyle{edge} = [draw,line width=0.7pt,-,black]
                    \node[vertex] (v0) at (-15,5)[label=above:$v_1$]{};
                    \node[vertex] (v1) at (-5,5)[label=above:$v_2$]{};
                    \node[vertex] (v2) at (5,5)[label=above:$v_3$]{}{};
                    \node[vertex] (v3) at (15,5)[label=above:$v_4$]{};
                    \node[vertex] (v4) at (15,-5)[label=below:$v_5$]{};
                    \node[vertex] (v5) at (5,-5)[label=below:$v_6$]{};
                    \node[vertex] (v6) at (-5,-5)[label=below:$v_7$]{};
                    \node[vertex] (v7) at (-15,-5)[label=below:$v_8$]{};
                    \draw[edge]  (v0) to (v1);
                    \draw[edge]  (v1) to (v2);
                    \draw[edge]  (v2) to (v3);
                    \draw[edge]  (v3) to (v4);
                    \draw[edge]  (v4) to (v5);
                    \draw[edge]  (v5) to (v6);
                    \draw[edge]  (v6) to (v7);
                    \draw[edge]  (v7) to (v0);
                    \draw[edge]  (v1) to (v6);
                    \draw[edge]  (v2) to (v5);
                    \draw[line width=0.7pt] (v0) sin(0,15) cos (v3);
                    \label{ltgp2}
                    \end{tikzpicture}}
                \subfigure[]{
                    \begin{tikzpicture}[scale=0.07]
                    \tikzstyle{vertex}=[draw,circle,minimum size=4pt,inner sep=0pt]
                    \tikzstyle{edge} = [draw,line width=0.7pt,-,black]
                    \node[vertex] (v0) at (-15,5)[label=above:$v_1$]{};
                    \node[vertex] (v1) at (-5,5)[label=above:$v_2$]{};
                    \node[vertex] (v2) at (5,5)[label=above:$v_3$]{}{};
                    \node[vertex] (v3) at (15,5)[label=above:$v_4$]{};
                    \node[vertex] (v4) at (15,-5)[label=below:$v_5$]{};
                    \node[vertex] (v5) at (5,-5)[label=below:$v_6$]{};
                    \node[vertex] (v6) at (-5,-5)[label=below:$v_7$]{};
                    \node[vertex] (v7) at (-15,-5)[label=below:$v_8$]{};
                    \draw[edge]  (v0) to (v1);
                    \draw[edge]  (v1) to (v2);
                    \draw[edge]  (v2) to (v3);
                    \draw[edge]  (v3) to (v4);
                    \draw[edge]  (v4) to (v5);
                    \draw[edge]  (v5) to (v6);
                    \draw[edge]  (v6) to (v7);
                    \draw[edge]  (v7) to (v0);
                    \draw[edge]  (v1) to (v6);
                    \draw[edge]  (v2) to (v5);
                    \node[vertex] (v12) at (-23,13)[label=left:$u_1$]{};
                    \node[vertex] (v15) at (-23,-13)[label=left:$u_2$]{};
                    \draw[edge]  (v0) to (v12);
                    \draw[edge]  (v15) to (v7);
                    \label{ltgp4}
                    \end{tikzpicture}}
                \subfigure[]{
                    \begin{tikzpicture}[scale=0.07]
                    \tikzstyle{vertex}=[draw,circle,minimum size=4pt,inner sep=0pt]
                    \tikzstyle{edge} = [draw,line width=0.7pt,-,black]
                    \node[vertex] (v0) at (-15,5)[label=left:$v_1$]{};
                    \node[vertex] (v1) at (-5,5){};
                    \node[vertex] (v2) at (5,5){};
                    \node[vertex] (v3) at (15,5)[label=right:$v_4$]{};
                    \node[vertex] (v4) at (15,-5)[label=right:$v_5$]{};
                    \node[vertex] (v5) at (5,-5){};
                    \node[vertex] (v6) at (-5,-5){};
                    \node[vertex] (v7) at (-15,-5)[label=left:$v_8$]{};
                    \draw[edge]  (v0) to (v1);
                    \draw[edge]  (v1) to (v2);
                    \draw[edge]  (v2) to (v3);
                    \draw[edge]  (v3) to (v4);
                    \draw[edge]  (v4) to (v5);
                    \draw[edge]  (v5) to (v6);
                    \draw[edge]  (v6) to (v7);
                    \draw[edge]  (v7) to (v0);
                    \draw[edge]  (v1) to (v6);
                    \draw[edge]  (v2) to (v5);
                    \node[vertex] (v8) at (23,13)[label=right:$u_3$]{};
                    \node[vertex] (v11) at (23,-13)[label=right:$u_4$]{};
                    \draw[edge]  (v3) to (v8);
                    \draw[edge]  (v11) to (v4);
                    \node[vertex] (v12) at (-23,13)[label=left:$u_1$]{};
                    \node[vertex] (v15) at (-23,-13)[label=left:$u_2$]{};
                    \draw[edge]  (v0) to (v12);
                    \draw[edge]  (v15) to (v7);
                    \label{ltgp5}
                    \end{tikzpicture}}\\
                \subfigure[]{
                    \begin{tikzpicture}[scale=0.05]
                    \tikzstyle{vertex}=[draw,circle,minimum size=4pt,inner sep=0pt]
                    \tikzstyle{edge} = [draw,line width=0.7pt,-,black]
                    \node[vertex] (v0) at (-15,5)[label=left:$v_1$]{};
                    \node[vertex] (v1) at (-5,5){};
                    \node[vertex] (v2) at (5,5){};
                    \node[vertex] (v3) at (15,5)[label=right:$v_4$]{};
                    \node[vertex] (v4) at (15,-5)[label=right:$v_5$]{};
                    \node[vertex] (v5) at (5,-5){};
                    \node[vertex] (v6) at (-5,-5){};
                    \node[vertex] (v7) at (-15,-5)[label=left:$v_8$]{};
                    \draw[edge]  (v0) to (v1);
                    \draw[edge]  (v1) to (v2);
                    \draw[edge]  (v2) to (v3);
                    \draw[edge]  (v3) to (v4);
                    \draw[edge]  (v4) to (v5);
                    \draw[edge]  (v5) to (v6);
                    \draw[edge]  (v6) to (v7);
                    \draw[edge]  (v7) to (v0);
                    \draw[edge]  (v1) to (v6);
                    \draw[edge]  (v2) to (v5);
                    \node[vertex] (v8) at (23,13)[label=right:$u_3$]{};
                    \node[vertex] (v11) at (23,-13)[label=right:$u_4$]{};
                    \draw[edge]  (v3) to (v8);
                    \draw[edge]  (v11) to (v4);
                    \node[vertex] (v12) at (-23,13)[label=left:$u_1$]{};
                    \node[vertex] (v15) at (-23,-13)[label=left:$u_2$]{};
                    \draw[edge]  (v0) to (v12);
                    \draw[edge]  (v15) to (v7);
                    \draw[edge]  (v12) to (v8);
                    \draw[edge]  (v15) to (v11);
                    \label{ltgp6}
                    \end{tikzpicture}}
                \subfigure[]{
                    \begin{tikzpicture}[scale=0.05]
                    \tikzstyle{vertex}=[draw,circle,minimum size=4pt,inner sep=0pt]
                    \tikzstyle{edge} = [draw,line width=0.7pt,-,black]
                    \node[vertex] (v0) at (-15,5)[label=left:$v_1$]{};
                    \node[vertex] (v1) at (-5,5){};
                    \node[vertex] (v2) at (5,5){};
                    \node[vertex] (v3) at (15,5)[label=right:$v_4$]{};
                    \node[vertex] (v4) at (15,-5)[label=right:$v_5$]{};
                    \node[vertex] (v5) at (5,-5){};
                    \node[vertex] (v6) at (-5,-5){};
                    \node[vertex] (v7) at (-15,-5)[label=left:$v_8$]{};
                    \draw[edge]  (v0) to (v1);
                    \draw[edge]  (v1) to (v2);
                    \draw[edge]  (v2) to (v3);
                    \draw[edge]  (v3) to (v4);
                    \draw[edge]  (v4) to (v5);
                    \draw[edge]  (v5) to (v6);
                    \draw[edge]  (v6) to (v7);
                    \draw[edge]  (v7) to (v0);
                    \draw[edge]  (v1) to (v6);
                    \draw[edge]  (v2) to (v5);
                    \node[vertex] (v8) at (23,13)[label=above:$u_3$]{};
                    \node[vertex] (v9) at (31,18)[label=right:$w_3$]{};
                    \node[vertex] (v10) at (31,-18)[label=right:$w_4$]{};
                    \node[vertex] (v11) at (23,-13)[label=below:$u_4$]{};
                    \draw[edge]  (v3) to (v8);
                    \draw[edge]  (v8) to (v9);
                    \draw[edge]  (v10) to (v11);
                    \draw[edge]  (v11) to (v4);
                    \node[vertex] (v12) at (-23,13)[label=above:$u_1$]{};
                    \node[vertex] (v13) at (-31,18)[label=left:$w_1$]{};
                    \node[vertex] (v14) at (-31,-18)[label=left:$w_2$]{};
                    \node[vertex] (v15) at (-23,-13)[label=below:$u_2$]{};
                    \draw[edge]  (v0) to (v12);
                    \draw[edge]  (v12) to (v13);
                    \draw[edge]  (v14) to (v15);
                    \draw[edge]  (v15) to (v7);
                    \draw[edge]  (v12) to (v8);
                    \draw[edge]  (v15) to (v11);
                    \label{ltgp7}
                    \end{tikzpicture}}
                \subfigure[]{
                    \begin{tikzpicture}[scale=0.05]
                    \tikzstyle{vertex}=[draw,circle,minimum size=4pt,inner sep=0pt]
                    \tikzstyle{edge} = [draw,line width=0.7pt,-,black]
                    \node[vertex] (v0) at (-15,5)[label=left:$v_1$]{};
                    \node[vertex] (v1) at (-5,5){};
                    \node[vertex] (v2) at (5,5){};
                    \node[vertex] (v3) at (15,5)[label=right:$v_4$]{};
                    \node[vertex] (v4) at (15,-5)[label=right:$v_5$]{};
                    \node[vertex] (v5) at (5,-5){};
                    \node[vertex] (v6) at (-5,-5){};
                    \node[vertex] (v7) at (-15,-5)[label=left:$v_8$]{};
                    \draw[edge]  (v0) to (v1);
                    \draw[edge]  (v1) to (v2);
                    \draw[edge]  (v2) to (v3);
                    \draw[edge]  (v3) to (v4);
                    \draw[edge]  (v4) to (v5);
                    \draw[edge]  (v5) to (v6);
                    \draw[edge]  (v6) to (v7);
                    \draw[edge]  (v7) to (v0);
                    \draw[edge]  (v1) to (v6);
                    \draw[edge]  (v2) to (v5);
                    \node[vertex] (v8) at (23,13)[label=above:$u_3$]{};
                    \node[vertex] (v9) at (31,18)[label=right:$w_3$]{};
                    \node[vertex] (v10) at (31,-18)[label=right:$w_4$]{};
                    \node[vertex] (v11) at (23,-13)[label=below:$u_4$]{};
                    \draw[edge]  (v3) to (v8);
                    \draw[edge]  (v8) to (v9);
                    \draw[edge]  (v9) to (v10);
                    \draw[edge]  (v10) to (v11);
                    \draw[edge]  (v11) to (v4);
                    \node[vertex] (v12) at (-23,13)[label=above:$u_1$]{};
                    \node[vertex] (v13) at (-31,18)[label=left:$w_1$]{};
                    \node[vertex] (v14) at (-31,-18)[label=left:$w_2$]{};
                    \node[vertex] (v15) at (-23,-13)[label=below:$u_2$]{};
                    \draw[edge]  (v0) to (v12);
                    \draw[edge]  (v12) to (v13);
                    \draw[edge]  (v13) to (v14);
                    \draw[edge]  (v14) to (v15);
                    \draw[edge]  (v15) to (v7);
                    \draw[edge]  (v12) to (v8);
                    \draw[edge]  (v15) to (v11);
                    \label{ltgp8}
                    \end{tikzpicture}}
                    \caption{Graphs for proof of Proposition~\ref{lotogl}.}
            \end{figure}
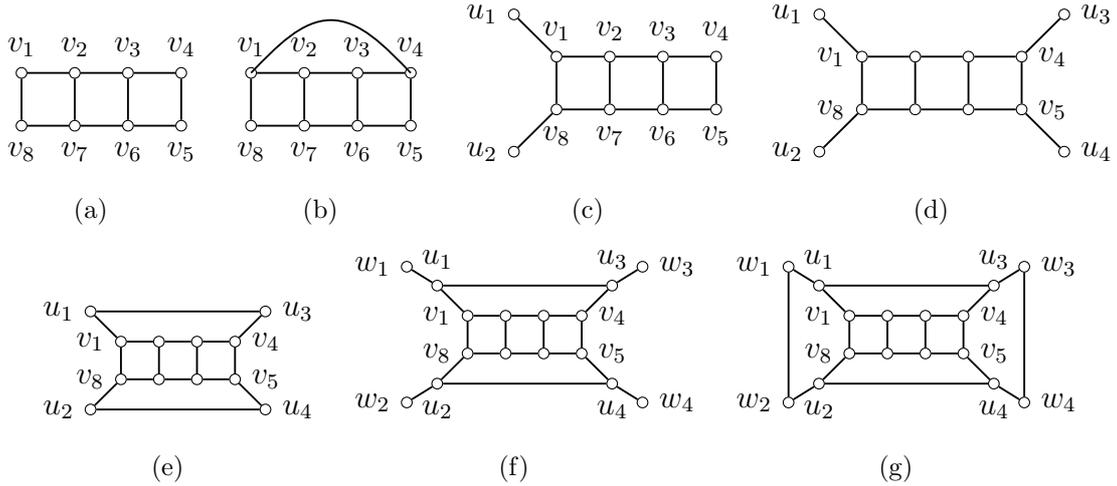

        \begin{proof} If there are more than 3 square faces arranged in a line, Proposition 2.8 implies that  $G$ is either a cube or a hexagonal prism and  the conclusion holds. So suppose that $G$ has some exactly three square faces arranged in a line. That is, the referred 3 square faces form a subgraph as shown in Figure~\ref{ltgp1} whose vertices are labelled so that each of edges $v_1v_8$ and $v_4v_5$ belongs to a hexagonal face.  Let $C=v_1v_2v_3v_4v_5v_6v_7v_8v_1$.

        Because $G$ is bipartite, $v_1v_5, v_4v_8\notin E(G)$. If $v_1v_4\in E(G)$ (Figure~\ref{ltgp2}), then  Theorem \ref{struct6} implies that 6-cycle $v_1v_4v_5v_6v_7v_8v_1$ is a dual-square, so $v_5v_8\in E(G)$. That is, $v_1v_8$ belongs to two squares, a contradiction. Hence $v_1v_4\notin E(G)$, similarly,  $v_5v_8\notin E(G)$.
            \par
            Therefore, there are vertices  $u_1$ and $u_2$  which are adjacent to $v_1$ and  $v_8$  respectively (Figure~\ref{ltgp4}). If  $u_2v_4\in E(G)$, 6-cycle $v_4v_5v_6v_7v_8u_2v_4$ is a dual-square by Lemma~\ref{6cycle1}, but this is impossible. Hence $u_2v_4\notin E(G)$, and similarly $u_1v_5\notin E(G)$. So $G$ has a vertex $u_3$ adjacent to $v_4$,  and  a vertex $u_4$ adjacent to $v_5$ (Figure~\ref{ltgp5}). Since $G$ is a (4,6)fullerene, $u_1u_3, u_2u_4\in E(G)$, and  $G$ has a subgraph as in Figure~\ref{ltgp6}. Since  $v_1v_8$ and $v_4v_5$  belong  to a hexagonal face,  $u_1u_2, u_3u_4\notin E(G)$. As above, $G$ has vertices $w_1, w_2, w_3, w_4$ satisfying $w_iu_i\in E(G), i=1,2,3,4$ (Figure~\ref{ltgp7}). Further, we have $w_1w_2, w_3w_4\in E(G)$ (Figure~\ref{ltgp8}). Since 8-cycle $w_1u_1u_3w_3w_4u_4u_2w_2w_1$ has a similar structure with $C$. If $w_1w_3\in E(G)$, then $w_2w_4\in E(G)$. In this case graph $G$ has already been determined (Figure~\ref{lanternsample}(a)), and the theorem holds. If $w_1w_3\notin E(G)$, then $w_2w_4\notin E(G)$.  The theorem also follows from  analogous arguments as above.
        \end{proof}

        \begin{remark}
            In the above proof, we have actually ascertained the whole graph structure. To be precise, we have exactly two groups of squares. In each group there are 3 squares arranged in a line.  If such two groups of squares have some common edges, the corresponding (4,6)-fullerene graphs must be cube and hexagonal prism. Otherwise, hexagonal faces are distributed around the two square groups of squares (See Figure~\ref{lanternsample} for two examples).  For ease of terminology, we say the latter kind of (4,6)-fullerene graphs are of ``lantern structure", which is important in the following enumeration.
        \end{remark}

        \begin{figure}\centering
                    \subfigure[]{
                        \begin{tikzpicture}[scale=0.055]
                        \tikzstyle{vertex}=[draw,circle,minimum size=4pt,inner sep=0pt]
                        \tikzstyle{edge} = [draw,line width=0.7pt,-,black]
                        \node[vertex] (v0) at (-15,5){};
                        \node[vertex] (v1) at (-5,5){};
                        \node[vertex] (v2) at (5,5){};
                        \node[vertex] (v3) at (15,5){};
                        \node[vertex] (v4) at (15,-5){};
                        \node[vertex] (v5) at (5,-5){};
                        \node[vertex] (v6) at (-5,-5){};
                        \node[vertex] (v7) at (-15,-5){};
                        \draw[edge]  (v0) to (v1);
                        \draw[edge]  (v1) to (v2);
                        \draw[edge]  (v2) to (v3);
                        \draw[edge]  (v3) to (v4);
                        \draw[edge]  (v4) to (v5);
                        \draw[edge]  (v5) to (v6);
                        \draw[edge]  (v6) to (v7);
                        \draw[edge]  (v7) to (v0);
                        \draw[edge]  (v1) to (v6);
                        \draw[edge]  (v2) to (v5);
                        \node[vertex] (v8) at (23,13){};
                        \node[vertex] (v9) at (31,18){};
                        \node[vertex] (v10) at (31,-18){};
                        \node[vertex] (v11) at (23,-13){};
                        \draw[edge]  (v3) to (v8);
                        \draw[edge]  (v8) to (v9);
                        \draw[edge]  (v9) to (v10);
                        \draw[edge]  (v10) to (v11);
                        \draw[edge]  (v11) to (v4);
                        \node[vertex] (v12) at (-23,13){};
                        \node[vertex] (v13) at (-31,18){};
                        \node[vertex] (v14) at (-31,-18){};
                        \node[vertex] (v15) at (-23,-13){};
                        \draw[edge]  (v0) to (v12);
                        \draw[edge]  (v12) to (v13);
                        \draw[edge]  (v13) to (v14);
                        \draw[edge]  (v14) to (v15);
                        \draw[edge]  (v15) to (v7);
                        \draw[edge]  (v12) to (v8);
                        \draw[edge]  (v15) to (v11);
                        \draw[edge]  (v9) to (v13);
                        \draw[edge]  (v10) to (v14);
                    \end{tikzpicture}}
                \hspace{5ex}
                    \subfigure[]{
                        \begin{tikzpicture}[scale=0.04]
                        \tikzstyle{vertex}=[draw,circle,minimum size=4pt,inner sep=0pt]
                        \tikzstyle{edge} = [draw,line width=0.7pt,-,black]
                        \node[vertex] (v0) at (-15,5){};
                        \node[vertex] (v1) at (-5,5){};
                        \node[vertex] (v2) at (5,5){};
                        \node[vertex] (v3) at (15,5){};
                        \node[vertex] (v4) at (15,-5){};
                        \node[vertex] (v5) at (5,-5){};
                        \node[vertex] (v6) at (-5,-5){};
                        \node[vertex] (v7) at (-15,-5){};
                        \draw[edge]  (v0) to (v1);
                        \draw[edge]  (v1) to (v2);
                        \draw[edge]  (v2) to (v3);
                        \draw[edge]  (v3) to (v4);
                        \draw[edge]  (v4) to (v5);
                        \draw[edge]  (v5) to (v6);
                        \draw[edge]  (v6) to (v7);
                        \draw[edge]  (v7) to (v0);
                        \draw[edge]  (v1) to (v6);
                        \draw[edge]  (v2) to (v5);
                        \node[vertex] (v8) at (23,13){};
                        \node[vertex] (v9) at (31,18){};
                        \node[vertex] (v10) at (31,-18){};
                        \node[vertex] (v11) at (23,-13){};
                        \draw[edge]  (v3) to (v8);
                        \draw[edge]  (v8) to (v9);
                        \draw[edge]  (v9) to (v10);
                        \draw[edge]  (v10) to (v11);
                        \draw[edge]  (v11) to (v4);
                        \node[vertex] (v12) at (-23,13){};
                        \node[vertex] (v13) at (-31,18){};
                        \node[vertex] (v14) at (-31,-18){};
                        \node[vertex] (v15) at (-23,-13){};
                        \draw[edge]  (v0) to (v12);
                        \draw[edge]  (v12) to (v13);
                        \draw[edge]  (v13) to (v14);
                        \draw[edge]  (v14) to (v15);
                        \draw[edge]  (v15) to (v7);
                        \draw[edge]  (v12) to (v8);
                        \draw[edge]  (v15) to (v11);
                        \node[vertex] (v16) at (41,25){};
                        \node[vertex] (v19) at (41,-25){};
                        \draw[edge]  (v9) to (v16);
                        \draw[edge]  (v19) to (v10);
                        \node[vertex] (v20) at (-41,25){};
                        \node[vertex] (v23) at (-41,-25){};
                        \draw[edge]  (v13) to (v20);
                        \draw[edge]  (v23) to (v14);
                        \draw[edge]  (v16) to (v20);
                        \draw[edge]  (v19) to (v23);
                        \draw[edge]  (v20) to (v23);
                        \draw[edge]  (v16) to (v19);
                        \end{tikzpicture}}
                    \caption{Two examples of (4,6)-fullerene of lantern structure.}
            \label{lanternsample}
        \end{figure}
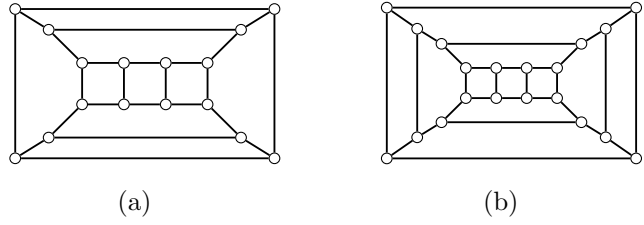

    \subsection{Classification theorem}

    A (4,6)-fullerene graph is said to be of {\em dispersive structure} if has neither three squares arranged in a line nor a square-cap. In the case of dispersive structure, every square face is adjacent to at most one square face.
        \begin{theorem}\label{category}
            Let $G$ be a (4,6)-fullerene graph. Then $G$ can be of one of the following~4 types:\\
                (i) a cube,\\
                (ii) a hexagonal prism,\\
                (iii) a tubular graph with at least one hexagon-layer,\\
                (iv) a (4,6)-fullerene graph of lantern structure, and\\
                (v) a (4,6)-fullerene graph of dispersive structure.
        \end{theorem}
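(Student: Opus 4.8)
The plan is to isolate the cube as the unique (4,6)-fullerene with no hexagonal faces, and then to classify the remaining graphs first by the cyclical edge-connectivity and afterwards by the largest number of square faces lying in a line. By Lemma~\ref{eulercac}(i) the condition $h=0$ forces $n=8$, and the only (4,6)-fullerene with $h=0$ is the cube (the degenerate tube $T_0$, which lies outside $\mathcal{T}$), so this graph is type (i); by Lemma~\ref{eulercac}(ii) the value $h=1$ is impossible. I would henceforth assume $h\geqslant 2$. Since $\zeta(G)\in\{3,4\}$, I first take $\zeta(G)=3$: Lemma~\ref{eulercac}(iv) gives $G\in\mathcal{T}$, and as $\mathcal{T}$ consists of the tubes $T_t$ with $t\geqslant 1$, $G$ is a tubular graph with at least one hexagon-layer, i.e. type (iii).

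It remains to treat $\zeta(G)=4$ under the standing assumption $h\geqslant 2$. Here the contrapositive of Lemma~\ref{typegc} shows at once that $G$ has no square-cap. I would then split according to the largest number $k$ of square faces that can be arranged in a line in $G$. If $k\geqslant 4$, then Proposition~\ref{lotogll} forces $G$ to be a cube or a hexagonal prism; since $h\geqslant 2$ rules out the cube, $G$ is a hexagonal prism, type (ii). If $k=3$, then Proposition~\ref{lotogl} together with the remark following it shows that $G$ is a cube, a hexagonal prism, or of lantern structure; excluding the cube again, $G$ is of type (ii) or (iv). Finally, if $k\leqslant 2$, then $G$ has no three squares in a line and, as just noted, no square-cap, which is precisely the definition of dispersive structure, so $G$ is of type (v).

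These cases are exhaustive: after the cube ($h=0$) and the impossible value $h=1$ are removed, every graph satisfies $h\geqslant 2$ and $\zeta(G)\in\{3,4\}$, and in the case $\zeta(G)=4$ the trichotomy $k\geqslant 4$, $k=3$, $k\leqslant 2$ covers all possibilities; hence $G$ is always one of the types (i)--(v). All the geometric substance is already supplied by Propositions~\ref{lotogll} and~\ref{lotogl} and by Lemma~\ref{typegc}, so the remaining work is purely organizational. The point that needs the most care is the bookkeeping of the cube: it has $\zeta(G)=4$ and does contain square-caps, yet it belongs to type (i), and because its hexagon count $h=0$ fails the hypothesis $h\geqslant 2$ of Lemma~\ref{typegc}, one cannot invoke that lemma to rule out square-caps for it. The cube must therefore be separated by hand before the connectivity argument begins, and checking that this degenerate graph meshes with Propositions~\ref{lotogll} and~\ref{lotogl} without leaving a gap is the main, if modest, obstacle.
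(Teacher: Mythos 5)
Your proposal is correct and follows essentially the same route as the paper's own proof: separate the cube via $h=0$, use $\zeta(G)\in\{3,4\}$ together with Lemma~\ref{eulercac}(iv) to peel off the tubular case, and then apply Propositions~\ref{lotogll} and~\ref{lotogl} according to the maximum number of squares in a line, with the leftover case being dispersive by definition. Your extra bookkeeping about $h=1$ and about the cube's square-caps is a welcome clarification but does not change the argument.
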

        \begin{proof}
            If $h=0$, then $G$ is a cube. So we suppose $h \geqslant 2$. If $\zeta(G)=3$, then $G$ has a square-cap and is of tubular structure with at least one hexagon-layer. Further, we suppose $\zeta(G)=4$. If there are at least 4 squares arranged in a line, $G$ is a hexagonal prism by Proposition~\ref{lotogll}. If some exactly 3 squares are arranged in a line, then $G$ is of lantern structure by Proposition~\ref{lotogl}. For the remaining (4,6)-fullerene graph, it has neither three squares arranged in a line nor a square-cap. Hence it is of dispersive structure.
        \end{proof}

        As an application of Theorem~\ref{category}, we count 6-cycles.

        \begin{corollary}
            Let $G$ be a (4,6)-fullerene graph. Then\\
                (i) if $G$ is a cube, then the number of 6-cycles is 16,\\
                (ii) if $G=T_t\ (t\geqslant1)$, then the number of 6-cycles is $4t+7$,\\
                (iii) if $G\notin\mathcal{T}$ other than the cube, then the number of 6-cycles is $h+y$,  where $y$ is the number of dual-squares.
        \end{corollary}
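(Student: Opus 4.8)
The plan is to invoke the structural classification of 6-cycles (Theorem~\ref{struct6}) and then, in each of the three cases, tally how many 6-cycles of each admissible type occur. The four types that Theorem~\ref{struct6} allows---hexagonal facial cycle, dual-square, square-cap, and square-cap with hexagon-layers---are pairwise distinct \emph{as cycles}: they are separated by faciality, by the number of chords carried in $G$, and by the number of square faces they enclose. So no 6-cycle is assigned to two types, and the required count is simply the sum over types.

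For case (iii), where $G\notin\mathcal T$ is not the cube, Lemma~\ref{6cycle1} already restricts the 6-cycles to the first two types. There are exactly $h$ hexagonal facial cycles, one per hexagonal face, and exactly $y$ dual-squares by the definition of $y$. These two families are disjoint because a facial 6-cycle is chordless whereas a dual-square carries the shared edge as a chord, so the total is $h+y$; this case is essentially immediate once Lemma~\ref{6cycle1} is in hand.

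For case (i) the cube has $h=0$, so no hexagonal facial cycle occurs, and as the degenerate tube its 6-cycles are square-caps and dual-squares. I would count dual-squares through edge-adjacent pairs of square faces: each of the $12$ edges lies on exactly two square faces, and Lemma~\ref{eulercac}(iii) guarantees those two faces meet in precisely that one edge, so distinct edges give distinct dual-squares, yielding $12$. I would count square-caps as the equatorial hexagons, one per pair of antipodal vertices, since the square-cap centred at a vertex has the same boundary 6-cycle as the one centred at its antipode; this yields $4$. Hence $12+4=16$. Because the cube is a fixed small graph, one may alternatively confirm $16$ by directly enumerating the 6-cycles according to the (adjacent, distance-two, or antipodal) pair of vertices they omit.

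The substantive case is (ii), where all four types occur (Lemma~\ref{6cycle2}). The hexagonal facial cycles number $h=3t$. The six square faces split three-to-a-cap; within a cap the three squares pairwise share an edge through the cap-centre, giving $3$ dual-squares per cap and $6$ in all, and for $t\geqslant1$ no square of one cap shares an edge with a square of the other, so there are no further dual-squares. The two end caps give exactly $2$ square-caps. The delicate count is that of square-caps with hexagon-layers: these are precisely the intermediate cross-sectional hexagons $C_1,\dots,C_{t-1}$ of the tube, and the key point is that one such cycle $C_i$ is \emph{simultaneously} the boundary of a square-cap with $i$ hexagon-layers built from one end and of a square-cap with $t-i$ hexagon-layers built from the other end, so these two constructions must be identified and the correct total is $t-1$, not $2(t-1)$. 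Summing, $3t+6+2+(t-1)=4t+7$. I expect the main obstacle to be exactly this identification across the two ends, together with verifying that each $C_i$ is a genuine non-facial chordless 6-cycle distinct from every hexagonal face, every dual-square, and each cap; the remaining bookkeeping follows routinely from the layer decomposition introduced in the proof of Lemma~\ref{6cycle2}.
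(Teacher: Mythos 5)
Your proposal is correct and follows essentially the same route as the paper: apply the classification of 6-cycles (Lemma~\ref{6cycle1} for case (iii), the cube's structure for (i), and Lemma~\ref{6cycle2} with the layer decomposition for (ii)) and sum the counts of each type, arriving at the same tallies ($12+4$ for the cube, and $3t+6+2+(t-1)=4t+7$ for $T_t$, the paper grouping the last two contributions as $t+1$ boundaries of square-caps with or without hexagon-layers). Your explicit justification that each intermediate cross-section $C_i$ is counted once rather than once from each end is a point the paper leaves implicit in its layer decomposition, but it is the same argument.
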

        \begin{proof}
        (i) In a cube, there are in total 12 dual-squares. Also there are exactly four 6-cycles that are boundaries of square-caps. \\
        (ii)  $T_t\ (t\geqslant1)$ has $3t$  hexagonal faces, 6  dual-squares  and $(t+1)$  6-cycles that are boundaries of square-caps with or without hexagonal-layers. Lemma~\ref{6cycle2} demonstrates our result.\\
        (iii) If $G\notin\mathcal{T}$ other than the cube and has $y$ dual-squares, by Lemma~\ref{6cycle1} the number of 6-cycles is $h+y$.
        \end{proof}

\section{Number of low-order matchings}
    \label{sec_enum}
        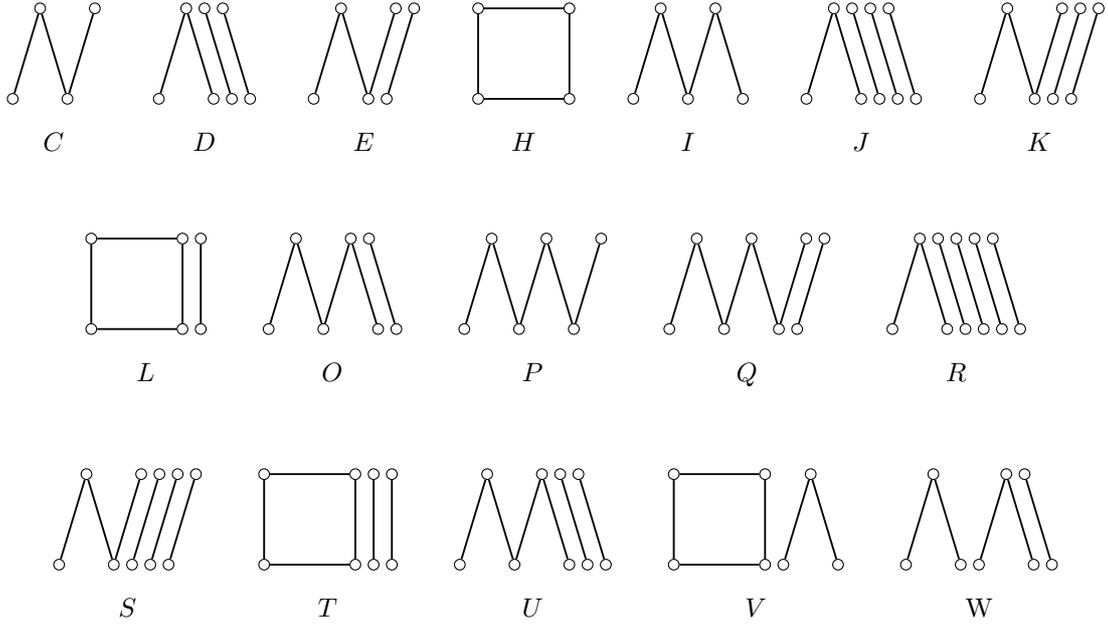
\begin{figure}[h]
            \centering
            \renewcommand{\thesubfigure}{}
                \subfigure[$C$]{
                    \begin{tikzpicture}[scale=1.2]
                        \tikzstyle{vertex}=[draw,circle,minimum size=4pt,inner sep=0pt]
                        \tikzstyle{edge} = [draw,line width=0.7pt,-,black]
                        \node[vertex] (v0) at (0,0)  {};
                        \node[vertex] (v1) at (0.3,1)  {};
                        \node[vertex] (v2) at (0.6,0)  {};
                        \node[vertex] (v3) at (0.9,1)  {};
                        \draw[edge] (v0)  -- (v1);
                        \draw[edge] (v1)  -- (v2);
                        \draw[edge] (v2)  -- (v3);
                    \end{tikzpicture}}
            \vspace{20pt}
            \hspace{1.5ex}
                \subfigure[$D$]{
                    \begin{tikzpicture}[scale=1.2]
                        \tikzstyle{vertex}=[draw,circle,minimum size=4pt,inner sep=0pt]
                        \tikzstyle{edge} = [draw,line width=0.7pt,-,black]
                        \node[vertex] (v0) at (0,0)  {};
                        \node[vertex] (v1) at (0.3,1) {};
                        \node[vertex] (v2) at (0.6,0) {};
                        \node[vertex] (v3) at (0.5,1) {};
                        \node[vertex] (v4) at (0.8,0) {};
                        \node[vertex] (v5) at (0.7,1) {};
                        \node[vertex] (v6) at (1.0,0) {};
                        \draw[edge] (v0)  -- (v1);
                        \draw[edge] (v1)  -- (v2);
                        \draw[edge] (v3)  -- (v4);
                        \draw[edge] (v5)  -- (v6);
                    \end{tikzpicture}}
            \hspace{1.5ex}
                \subfigure[$E$]{
                    \begin{tikzpicture}[scale=1.2]
                        \tikzstyle{vertex}=[draw,circle,minimum size=4pt,inner sep=0pt]
                        \tikzstyle{edge} = [draw,line width=0.7pt,-,black]
                        \node[vertex] (v0) at (0,0)  {};
                        \node[vertex] (v1) at (0.3,1)  {};
                        \node[vertex] (v2) at (0.6,0)  {};
                        \node[vertex] (v3) at (0.9,1)  {};
                        \node[vertex] (v4) at (0.8,0)  {};
                        \node[vertex] (v5) at (1.1,1)  {};
                        \draw[edge] (v0)  -- (v1);
                        \draw[edge] (v1)  -- (v2);
                        \draw[edge] (v2)  -- (v3);
                        \draw[edge] (v4)  -- (v5);
                    \end{tikzpicture}}
            \hspace{1.5ex}
                \subfigure[$H$]{
                    \begin{tikzpicture}[scale=1.2]
                        \tikzstyle{vertex}=[draw,circle,minimum size=4pt,inner sep=0pt]
                        \tikzstyle{edge} = [draw,line width=0.7pt,-,black]
                        \node[vertex] (v0) at (0,0)  {};
                        \node[vertex] (v1) at (0,1)  {};
                        \node[vertex] (v2) at (1,1)  {};
                        \node[vertex] (v3) at (1,0)  {};
                        \draw[edge] (v0)  -- (v1);
                        \draw[edge] (v1)  -- (v2);
                        \draw[edge] (v2)  -- (v3);
                        \draw[edge] (v3)  -- (v0);
                    \end{tikzpicture}}
            \hspace{1.5ex}
                \subfigure[$I$]{
                    \begin{tikzpicture}[scale=1.2]
                        \tikzstyle{vertex}=[draw,circle,minimum size=4pt,inner sep=0pt]
                        \tikzstyle{edge} = [draw,line width=0.7pt,-,black]
                        \node[vertex] (v0) at (0,0)  {};
                        \node[vertex] (v1) at (0.3,1)  {};
                        \node[vertex] (v2) at (0.6,0)  {};
                        \node[vertex] (v3) at (0.9,1)  {};
                        \node[vertex] (v4) at (1.2,0)  {};
                        \draw[edge] (v0)  -- (v1);
                        \draw[edge] (v1)  -- (v2);
                        \draw[edge] (v2)  -- (v3);
                        \draw[edge] (v3)  -- (v4);
                    \end{tikzpicture}}
            \hspace{1.5ex}
                \subfigure[$J$]{
                    \begin{tikzpicture}[scale=1.2]
                        \tikzstyle{vertex}=[draw,circle,minimum size=4pt,inner sep=0pt]
                        \tikzstyle{edge} = [draw,line width=0.7pt,-,black]
                        \node[vertex] (v0) at (0,0) {};
                        \node[vertex] (v1) at (0.3,1) {};
                        \node[vertex] (v2) at (0.6,0) {};
                        \node[vertex] (v3) at (0.5,1) {};
                        \node[vertex] (v4) at (0.8,0) {};
                        \node[vertex] (v5) at (0.7,1) {};
                        \node[vertex] (v6) at (1.0,0) {};
                        \node[vertex] (v7) at (0.9,1) {};
                        \node[vertex] (v8) at (1.2,0) {};
                        \draw[edge] (v0)  -- (v1);
                        \draw[edge] (v1)  -- (v2);
                        \draw[edge] (v3)  -- (v4);
                        \draw[edge] (v5)  -- (v6);
                        \draw[edge] (v7)  -- (v8);
                    \end{tikzpicture}}
            \hspace{1.5ex}
                \subfigure[$K$]{
                    \begin{tikzpicture}[scale=1.2]
                        \tikzstyle{vertex}=[draw,circle,minimum size=4pt,inner sep=0pt]
                        \tikzstyle{edge} = [draw,line width=0.7pt,-,black]
                        \node[vertex] (v0) at (0,0)  {};
                        \node[vertex] (v1) at (0.3,1)  {};
                        \node[vertex] (v2) at (0.6,0)  {};
                        \node[vertex] (v3) at (0.9,1)  {};
                        \node[vertex] (v4) at (0.8,0)  {};
                        \node[vertex] (v5) at (1.1,1)  {};
                        \node[vertex] (v6) at (1.0,0)  {};
                        \node[vertex] (v7) at (1.3,1)  {};
                        \draw[edge] (v0)  -- (v1);
                        \draw[edge] (v1)  -- (v2);
                        \draw[edge] (v2)  -- (v3);
                        \draw[edge] (v4)  -- (v5);
                        \draw[edge] (v6)  -- (v7);
                    \end{tikzpicture}}\\
                \subfigure[$L$]{
                    \begin{tikzpicture}[scale=1.2]
                        \tikzstyle{vertex}=[draw,circle,minimum size=4pt,inner sep=0pt]
                        \tikzstyle{edge} = [draw,line width=0.7pt,-,black]
                        \node[vertex] (v0) at (0,0)  {};
                        \node[vertex] (v1) at (0,1)  {};
                        \node[vertex] (v2) at (1,1)  {};
                        \node[vertex] (v3) at (1,0)  {};
                        \node[vertex] (v4) at (1.2,0)  {};
                        \node[vertex] (v5) at (1.2,1)  {};
                        \draw[edge] (v0)  -- (v1);
                        \draw[edge] (v1)  -- (v2);
                        \draw[edge] (v2)  -- (v3);
                        \draw[edge] (v3)  -- (v0);
                        \draw[edge] (v4)  -- (v5);
                    \end{tikzpicture}}
            \vspace{20pt}
            \hspace{1.8ex}
                \subfigure[$O$]{
                    \begin{tikzpicture}[scale=1.2]
                        \tikzstyle{vertex}=[draw,circle,minimum size=4pt,inner sep=0pt]
                        \tikzstyle{edge} = [draw,line width=0.7pt,-,black]
                        \node[vertex] (v0) at (0,0)  {};
                        \node[vertex] (v1) at (0.3,1)  {};
                        \node[vertex] (v2) at (0.6,0)  {};
                        \node[vertex] (v3) at (0.9,1)  {};
                        \node[vertex] (v4) at (1.2,0)  {};
                        \node[vertex] (v5) at (1.1,1) {};
                        \node[vertex] (v6) at (1.4,0) {};
                        \draw[edge] (v0)  -- (v1);
                        \draw[edge] (v1)  -- (v2);
                        \draw[edge] (v2)  -- (v3);
                        \draw[edge] (v3)  -- (v4);
                        \draw[edge] (v5)  -- (v6);
                    \end{tikzpicture}}
            \hspace{1.8ex}
                 \subfigure[$P$]{
                    \begin{tikzpicture}[scale=1.2]
                        \tikzstyle{vertex}=[draw,circle,minimum size=4pt,inner sep=0pt]
                        \tikzstyle{edge} = [draw,line width=0.7pt,-,black]
                        \node[vertex] (v0) at (0,0)  {};
                        \node[vertex] (v1) at (0.3,1)  {};
                        \node[vertex] (v2) at (0.6,0) {};
                        \node[vertex] (v3) at (0.9,1) {};
                        \node[vertex] (v4) at (1.2,0) {};
                        \node[vertex] (v5) at (1.5,1) {};
                        \draw[edge] (v0)  -- (v1);
                        \draw[edge] (v1)  -- (v2);
                        \draw[edge] (v2)  -- (v3);
                        \draw[edge] (v3)  -- (v4);
                        \draw[edge] (v4)  -- (v5);
                    \end{tikzpicture}}
            \hspace{1.8ex}
                 \subfigure[$Q$]{
                    \begin{tikzpicture}[scale=1.2]
                        \tikzstyle{vertex}=[draw,circle,minimum size=4pt,inner sep=0pt]
                        \tikzstyle{edge} = [draw,line width=0.7pt,-,black]
                        \node[vertex] (v0) at (0,0)  {};
                        \node[vertex] (v1) at (0.3,1)  {};
                        \node[vertex] (v2) at (0.6,0) {};
                        \node[vertex] (v3) at (0.9,1) {};
                        \node[vertex] (v4) at (1.2,0) {};
                        \node[vertex] (v5) at (1.5,1) {};
                        \node[vertex] (v6) at (1.4,0) {};
                        \node[vertex] (v7) at (1.7,1)  {};
                        \draw[edge] (v0)  -- (v1);
                        \draw[edge] (v1)  -- (v2);
                        \draw[edge] (v2)  -- (v3);
                        \draw[edge] (v3)  -- (v4);
                        \draw[edge] (v4)  -- (v5);
                        \draw[edge] (v6)  -- (v7);
                    \end{tikzpicture}}
            \hspace{1.8ex}
                 \subfigure[$R$]{
                    \begin{tikzpicture}[scale=1.2]
                        \tikzstyle{vertex}=[draw,circle,minimum size=4pt,inner sep=0pt]
                        \tikzstyle{edge} = [draw,line width=0.7pt,-,black]
                        \node[vertex] (v0) at (0,0) {};
                        \node[vertex] (v1) at (0.3,1) {};
                        \node[vertex] (v2) at (0.6,0) {};
                        \node[vertex] (v3) at (0.5,1) {};
                        \node[vertex] (v4) at (0.8,0) {};
                        \node[vertex] (v5) at (0.7,1) {};
                        \node[vertex] (v6) at (1.0,0) {};
                        \node[vertex] (v7) at (0.9,1) {};
                        \node[vertex] (v8) at (1.2,0) {};
                        \node[vertex] (v9) at (1.1,1) {};
                        \node[vertex] (v10) at (1.4,0){};
                        \draw[edge] (v0)  -- (v1);
                        \draw[edge] (v1)  -- (v2);
                        \draw[edge] (v3)  -- (v4);
                        \draw[edge] (v5)  -- (v6);
                        \draw[edge] (v7)  -- (v8);
                        \draw[edge] (v9)  -- (v10);
                    \end{tikzpicture}}\\
                 \subfigure[$S$]{
                    \begin{tikzpicture}[scale=1.2]
                        \tikzstyle{vertex}=[draw,circle,minimum size=4pt,inner sep=0pt]
                        \tikzstyle{edge} = [draw,line width=0.7pt,-,black]
                        \node[vertex] (v0) at (0,0) {};
                        \node[vertex] (v1) at (0.3,1) {};
                        \node[vertex] (v2) at (0.6,0) {};
                        \node[vertex] (v3) at (0.9,1) {};
                        \node[vertex] (v4) at (0.8,0) {};
                        \node[vertex] (v5) at (1.1,1) {};
                        \node[vertex] (v6) at (1.0,0) {};
                        \node[vertex] (v7) at (1.3,1) {};
                        \node[vertex] (v8) at (1.2,0) {};
                        \node[vertex] (v9) at (1.5,1) {};
                        \draw[edge] (v0)  -- (v1);
                        \draw[edge] (v1)  -- (v2);
                        \draw[edge] (v2)  -- (v3);
                        \draw[edge] (v4)  -- (v5);
                        \draw[edge] (v6)  -- (v7);
                        \draw[edge] (v8)  -- (v9);
                    \end{tikzpicture}}
            \hspace{1.8ex}
                \subfigure[$T$]{
                    \begin{tikzpicture}[scale=1.2]
                        \tikzstyle{vertex}=[draw,circle,minimum size=4pt,inner sep=0pt]
                        \tikzstyle{edge} = [draw,line width=0.7pt,-,black]
                        \node[vertex] (v0) at (0,0)  {};
                        \node[vertex] (v1) at (0,1)  {};
                        \node[vertex] (v2) at (1,1)  {};
                        \node[vertex] (v3) at (1,0)  {};
                        \node[vertex] (v4) at (1.2,0)  {};
                        \node[vertex] (v5) at (1.2,1)  {};
                        \node[vertex] (v6) at (1.4,0)  {};
                        \node[vertex] (v7) at (1.4,1)  {};
                        \draw[edge] (v0)  -- (v1);
                        \draw[edge] (v1)  -- (v2);
                        \draw[edge] (v2)  -- (v3);
                        \draw[edge] (v3)  -- (v0);
                        \draw[edge] (v4)  -- (v5);
                        \draw[edge] (v6)  -- (v7);
                    \end{tikzpicture}}
            \hspace{1.8ex}
                 \subfigure[$U$]{
                    \begin{tikzpicture}[scale=1.2]
                        \tikzstyle{vertex}=[draw,circle,minimum size=4pt,inner sep=0pt]
                        \tikzstyle{edge} = [draw,line width=0.7pt,-,black]
                        \node[vertex] (v0) at (0,0)  {};
                        \node[vertex] (v1) at (0.3,1)  {};
                        \node[vertex] (v2) at (0.6,0)  {};
                        \node[vertex] (v3) at (0.9,1)  {};
                        \node[vertex] (v4) at (1.2,0)  {};
                        \node[vertex] (v5) at (1.1,1)  {};
                        \node[vertex] (v6) at (1.4,0)  {};
                        \node[vertex] (v7) at (1.3,1)  {};
                        \node[vertex] (v8) at (1.6,0)  {};
                        \draw[edge] (v0)  -- (v1);
                        \draw[edge] (v1)  -- (v2);
                        \draw[edge] (v2)  -- (v3);
                        \draw[edge] (v3)  -- (v4);
                        \draw[edge] (v5)  -- (v6);
                        \draw[edge] (v7)  -- (v8);
                    \end{tikzpicture}}
            \hspace{1.8ex}
                 \subfigure[$V$]{
                    \begin{tikzpicture}[scale=1.2]
                            \tikzstyle{vertex}=[draw,circle,minimum size=4pt,inner sep=0pt]
                            \tikzstyle{edge} = [draw,line width=0.7pt,-,black]
                            \node[vertex] (v0) at (0,0) {};
                            \node[vertex] (v1) at (0.3,1) {};
                            \node[vertex] (v2) at (0.6,0){};
                            \draw[edge] (v0)  -- (v1);
                            \draw[edge] (v1)  -- (v2);
                            \node[vertex] (v3) at (-1.2,0) {};
                            \node[vertex] (v4) at (-0.2,0) {};
                            \node[vertex] (v5) at (-0.2,1) {};
                            \node[vertex] (v6) at (-1.2,1) {};
                            \draw[edge] (v3)  -- (v4);
                            \draw[edge] (v4)  -- (v5);
                            \draw[edge] (v5)  -- (v6);
                            \draw[edge] (v6)  -- (v3);
                        \end{tikzpicture}}
            \hspace{1.8ex}
                 \subfigure[W]{
                    \begin{tikzpicture}[scale=1.2]
                        \tikzstyle{vertex}=[draw,circle,minimum size=4pt,inner sep=0pt]
                        \tikzstyle{edge} = [draw,line width=0.7pt,-,black]
                        \node[vertex] (v0) at (0,0)  {};
                        \node[vertex] (v1) at (0.3,1)  {};
                        \node[vertex] (v2) at (0.6,0)  {};
                        \node[vertex] (v22) at (0.8,0)  {};
                        \node[vertex] (v3) at (1.1,1)  {};
                        \node[vertex] (v4) at (1.4,0)  {};
                        \node[vertex] (v5) at (1.3,1) {};
                        \node[vertex] (v6) at (1.6,0) {};
                        \draw[edge] (v0)  -- (v1);
                        \draw[edge] (v1)  -- (v2);
                        \draw[edge] (v22)  -- (v3);
                        \draw[edge] (v3)  -- (v4);
                        \draw[edge] (v5)  -- (v6);
                    \end{tikzpicture}}
            \caption{Some possible subgraphs of a (4,6)-fullerene graph.}
            \label{collection}
        \end{figure}
        Let $G$ be a (4,6)-fullerene graph. We will consider several small subgraphs of (4,6)-fullerene graphs. Those of interest to us are shown in Figure~\ref{collection} and will be denoted by $C, D, E, H,\dots, W$ as shown in the figure. For a subgraph $S$ of $G$, let $N(S)$ be the number of subgraphs of $G$ that are isomorphic to $S$.
        \begin{lemma}\label{pre0}
            Let $G$ be a (4,6)-fullerene graph. Then\\
                (i)\cite{behmaram13} $N(C)=4m$, $N(I)=8m-24$;\\
                (ii)\cite{li14} $N(O)=72h^2+240h+120$, $N(L)=18h+24$.
        \end{lemma}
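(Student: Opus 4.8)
The plan is to count each of the four subgraph types by a local edge- or vertex-based argument, exploiting throughout that $G$ is cubic and bipartite (hence triangle-free), that it has exactly six square faces, and that by Lemma~\ref{square} every $4$-cycle bounds a (square) face while by Lemma~\ref{eulercac}(iii) no two faces share more than one edge. Recall that $C$ is a $3$-edge path, $I$ a $4$-edge path, $L$ a square together with a disjoint edge, and $O$ a $4$-edge path together with a disjoint edge.

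For $N(C)$ I would fix the middle edge. A copy of $C$ with middle edge $uv$ is obtained by choosing a neighbour of $u$ other than $v$ and a neighbour of $v$ other than $u$, giving $(3-1)(3-1)=4$ choices, and triangle-freeness guarantees the two chosen vertices are distinct so a genuine path results. Summing over all edges gives $N(C)=4m$. For $N(I)$ I would instead fix the central $2$-edge path $bcd$ with centre $c$; there are $\sum_{c}\binom{3}{2}=3n$ of these. Each copy of $I$ arises by extending the ends to $a\in N(b)\setminus\{c\}$ and $e\in N(d)\setminus\{c\}$, giving $2\cdot 2=4$ ordered extensions, the only degenerate case being $a=e$ (the cases $a=d$, $e=b$ are excluded by triangle-freeness). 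Such a coincidence means $a=e$ is a common neighbour of $b,d$ other than $c$, which forces a square through $b,c,d$; Lemma~\ref{eulercac}(iii) forbids a second such square, so each square loses exactly its four two-edge subpaths. Hence the correction is $4\cdot 6=24$ and $N(I)=4\cdot 3n-24=12n-24=8m-24$, using $3n=2m$.

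For $N(L)$ I would run over the six squares. For a fixed square $S$ the edges meeting $V(S)$ are its four sides together with one outward edge at each vertex; triangle-freeness and Lemma~\ref{eulercac}(iii) force these four outward edges to be distinct and to have distinct outer endpoints, and $S$ has no chord, so exactly $4+4=8$ edges meet $V(S)$. Thus $m-8$ edges are disjoint from $S$ and $N(L)=6(m-8)=18h+24$. For $N(O)$ I would, for each copy of $I$, count the edges disjoint from its five vertices. Writing $\chi$ for the number of chords of the path, the internal edges number $4+\chi$, so $\sum_i\deg v_i-(4+\chi)=15-4-\chi=11-\chi$ edges meet $V(P_5)$ and $m-11+\chi$ are disjoint; summing gives $N(O)=(m-11)N(I)+\sum_{P_5}\chi$. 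The only possible chords are $v_1v_4$ and $v_2v_5$ (all others are barred by triangle-freeness or, for $v_1v_5$, by bipartiteness), and a path cannot carry both, since that would make two squares share two edges. So $\sum_{P_5}\chi$ counts the chord-bearing paths, each of which is a Hamiltonian path of some square plus a pendant edge at one of its endpoints, yielding $4\cdot 2=8$ per square and $\sum_{P_5}\chi=48$. Substituting $N(I)=8m-24$ and $m=3h+12$ gives $N(O)=(3h+1)(24h+72)+48=72h^2+240h+120$.

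The main obstacle throughout is the bookkeeping of degenerate configurations: isolating exactly which vertex coincidences can occur and proving each one comes from a square counted with the correct multiplicity. The most delicate point is in $N(O)$: a chord of a $P_5$ is a single edge whose two apparent ``types'' $v_1v_4$ and $v_2v_5$ are identified when the path is read in reverse, so the chord-bearing paths must be counted as $48$ rather than $96$. It is precisely the structural constraints of Lemma~\ref{square} and Lemma~\ref{eulercac}(iii), together with the fact that there are exactly six squares, that pin down these multiplicities and make all four counts come out in closed form.
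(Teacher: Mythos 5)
Your proposal is correct, and all four counts check out: $N(C)=4m$ by the middle-edge argument (triangle-freeness rules out the only possible coincidence), $N(I)=12n-24=8m-24$ with exactly $4\cdot 6=24$ degenerate extensions coming from the four $2$-paths on each of the six squares (Lemma~\ref{square} and Lemma~\ref{eulercac}(iii) guarantee at most one bad extension per $2$-path), $N(L)=6(m-8)=18h+24$ since the four outward edges of a square are distinct with distinct outer endpoints, and $N(O)=(m-11)N(I)+48=72h^2+240h+120$ with the chord analysis correctly reduced to the two types $v_1v_4$, $v_2v_5$ and the observation that no $P_5$ carries both. Note, however, that the paper offers no proof of this lemma at all: both parts are imported verbatim from \cite{behmaram13} and \cite{li14}, so there is nothing internal to compare your argument against. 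The closest analogue is the paper's own proof of Lemma~\ref{pre} (the counts of $N(P)$, $N(T)$, $N(V)$), which uses the same style of ``choose a smaller configuration and extend, then correct for squares.'' Your centre-based count of $N(I)$ (fixing the middle $2$-path rather than an end edge and extending outward) is in fact the safer variant of exactly the extension procedure that the paper's Remark identifies as the source of the error in \cite{behmaram13}'s value of $N(P)$: by symmetrizing around the centre you never extend out of a square in a restricted way, so the only correction needed is the clean closed-$4$-cycle degeneracy. Your degree-sum-minus-chords computation of $N(O)$ likewise gives an independent confirmation of the value of $N(O)$ from \cite{li14} that the paper relies on but does not verify.
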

        \begin{lemma}\label{pre}
            Let $G$ be a (4,6)-fullerene graph. Then $N(P)=48h+120$, $N(T)=27h^2+27h+12$ and $N(V)=36h+24$.
        \end{lemma}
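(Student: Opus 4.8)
The plan is to reduce all three formulas to one local structural fact about squares and then do short counts. Throughout I use that $G$ is cubic and bipartite, that $n=2h+8$, $m=3h+12$ (Lemma~\ref{eulercac}), that every $4$-cycle is a face (Lemma~\ref{square}) so $G$ has exactly six squares, and the value $N(I)=8m-24$ (Lemma~\ref{pre0}). Here $P$ is the path $P_6$, $V$ is a square together with a disjoint $2$-edge path, and $T$ is a square together with two disjoint independent edges.

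The key claim I would establish first is: \emph{for any square $S$ of $G$, every vertex not on $S$ has at most one neighbour on $S$}. The proof is two lines. A vertex adjacent to two \emph{adjacent} corners of $S$ would form a triangle, impossible since $G$ is bipartite. If a vertex $c\notin S$ were adjacent to two \emph{opposite} corners $w_1,w_3$ of $S=w_1w_2w_3w_4$, then $cw_1w_2w_3c$ is a $4$-cycle, hence a face, sharing the two edges $w_1w_2$ and $w_2w_3$ with the face $S$, contradicting Lemma~\ref{eulercac}(iii). From this claim it follows that each of the four corners of $S$ has exactly one neighbour off $S$, that these four ``outer'' edges are distinct, and hence that deleting $V(S)$ removes exactly $8$ edges; moreover in $G-V(S)$ exactly four vertices have degree $2$ and the remaining $n-8$ have degree $3$.

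For $N(V)$ and $N(T)$ I would then count by first choosing one of the six squares $S$ and completing it inside $G-V(S)$. The number of $2$-paths ($P_3$) in $G-V(S)$ is $\sum_{v\notin V(S)}\binom{d'(v)}{2}=4\binom{2}{2}+(n-8)\binom{3}{2}=3n-20=6h+4$, so $N(V)=6(6h+4)=36h+24$. For $N(T)$, the number of disjoint edge pairs in $G-V(S)$ equals $\binom{m'}{2}-\sum_{v}\binom{d'(v)}{2}$ with $m'=m-8=3h+4$; this gives $\binom{3h+4}{2}-(6h+4)=\tfrac12(9h^2+9h+4)$, and multiplying by $6$ yields $N(T)=27h^2+27h+12$.

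For $N(P)$ I would count ordered paths $(v_1,\dots,v_6)$ by extending ordered $P_5$'s. There are $2N(I)=16m-48$ ordered $P_5$'s, and appending a vertex $v_6\in N(v_5)\setminus\{v_4\}$ gives two candidates; bipartiteness (even distance) rules out $v_6=v_1$ and $v_6=v_3$, so the only forbidden choice is $v_6=v_2$, which can occur exactly when $v_2v_5\in E(G)$, i.e.\ when $v_2v_3v_4v_5$ is a square. Hence the number of ordered $P_6$'s is $2\cdot 2N(I)-A$, where $A$ counts ordered $P_5$'s with $v_2v_5\in E(G)$; such a $P_5$ is a Hamiltonian path of a square (with a distinguished start) together with the unique outer neighbour $v_1$ of $v_2$, giving $A=6\cdot 8=48$. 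Dividing by $2$, $N(P)=\tfrac12(4N(I)-48)=16m-72=48h+120$. The main obstacle in all of this is the key claim: once the ``at most one neighbour on a square'' statement is in hand (and, for $P$, the parallel observation that the only degenerate extension passes through a square), every remaining step is a routine degree/edge count, which I would also cross-check on the cube ($h=0$, giving $120$, $12$, $24$).
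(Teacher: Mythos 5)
Your proof is correct and follows essentially the same route as the paper's: $N(V)$ and $N(T)$ are obtained by fixing one of the six squares and counting $2$-paths, respectively pairs of edges, among what remains, and $N(P)$ by extending shorter paths with a correction term for the paths lying on a square (you extend an ordered $P_5$ at one end where the paper extends a $P_4$ at both ends, but the bookkeeping is equivalent and both yield $16m-72$). The only substantive addition is your explicit ``at most one neighbour on a square'' claim, which the paper's degree counts use tacitly; that is a welcome piece of rigor rather than a divergence in method.
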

        \begin{proof}
            To count $N(P)$, we choose one path of length 3 first and then choose an edge from the two endpoints  of the path independently. Noting that the chosen 3-length path may lie on a square,  we have
            \begin{displaymath}
                N(P)=(N(C)-6\times4)\times2\times2+6\times4=48h+120.
            \end{displaymath}
            \par
            We now consider $N(T)$. We choose a square and then two edges not incident to the chosen square. Then we can get a subgraph $T$ or $V$. $N(V)$ can be calculated through the following formula
            \begin{displaymath}
                N(V)=6\times(n-8)\times3+6\times4=36h+24.
            \end{displaymath}
            Thus, we have
            \begin{displaymath}
                N(T)=6\times\binom{m-8}{2}-N(V)=27h^2+27h+12.
            \end{displaymath}
        \end{proof}
        \begin{remark}
            The number $N(P)$ is not consistent with that in~\cite{behmaram13}. Behmaram et al. obtained $N(P)$ by choosing an edge at first and then extending the path to both directions. With this in mind~\cite{behmaram13} gives a formula $N(P)= m\times4\times4-48$, where~48 deals with some cases in which the original edge is located in a square and the resulted graph after extension is a square with a hanging edge. However, they did not notice whether the original edge is in a square and the extension process would be restricted. Thus the formula is not correct. Since Li et al.~\cite{li14} obtain the enumeration of 5-matching by applying the $N(P)$ from~\cite{behmaram13}, their formula  is also wrong. Moreover, a similar extension restriction leads to a mistake in the calculation of $N(W)$ in~\cite{li14}.
        \end{remark}
        \begin{lemma}\cite{behmaram13,li14}\label{mm123}
            Let $G$ be a (4,6)-fullerene graph. Then we have
            \begin{equation}\begin{split}\notag
                M(G,1)&=3h+12,\\
                M(G,2)&=\dfrac{9}{2}h^2+\dfrac{57}{2}h+42,\\
                M(G,3)&=\dfrac{9}{2}h^3+\dfrac{63}{2}h^2+65h+44.
            \end{split}\end{equation}
        \end{lemma}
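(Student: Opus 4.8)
The plan is to use the structural data from Lemma~\ref{eulercac}(i), namely that $G$ is cubic with $n=2h+8$ vertices and $m=3h+12$ edges, together with the fact that $G$ is triangle-free (being bipartite). Throughout, the idea is to count all $k$-subsets of $E(G)$ and then subtract those that fail to be matchings, organizing the failures by their adjacency pattern. For $k=1$ there is nothing to do: $M(G,1)=m=3h+12$. For $k=2$ I would start from $\binom{m}{2}$ and subtract the number of pairs of adjacent edges; since two edges are adjacent exactly when they share a vertex, and each vertex (of degree $3$) is the apex of $\binom{3}{2}=3$ such pairs, there are $3n$ adjacent pairs. Hence $M(G,2)=\binom{m}{2}-3n$, and substituting $m=3h+12$, $n=2h+8$ yields the stated quadratic.

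The substantive step is $M(G,3)$. I would write $M(G,3)=\binom{m}{3}$ minus the number of $3$-subsets of $E(G)$ that are \emph{not} matchings, and classify these non-matchings by how many of their three edge-pairs are adjacent. A triple with exactly one adjacent pair is a cherry $P_3$ together with a third edge disjoint from it; there are $3n$ cherries, and because triangle-freeness forces the three vertices of a cherry to be incident with exactly $7$ edges, each cherry admits $m-7$ disjoint completions. A triple with exactly two adjacent pairs is a path $P_4$, i.e.\ a copy of the subgraph $C$ of Figure~\ref{collection}, and these number $N(C)=4m$ by Lemma~\ref{pre0}(i). A triple with all three pairs adjacent is either a triangle or a star $K_{1,3}$; triangles are excluded by girth $\geq 4$, and each vertex contributes exactly one star, giving $n$ in total. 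Combining,
\[
M(G,3)=\binom{m}{3}-3n(m-7)-4m-n,
\]
which simplifies to $\tfrac{9}{2}h^3+\tfrac{63}{2}h^2+65h+44$ after substituting $m=3h+12$ and $n=2h+8$.

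The main obstacle is to make the $M(G,3)$ classification provably exhaustive and mutually disjoint, so that no non-matching triple is missed or double-counted. The crucial structural input is triangle-freeness, which I expect to invoke twice: once to guarantee that the three vertices of a cherry meet exactly $7$ edges (so the completion count is $m-7$ rather than $m-6$), and once to eliminate triangles from the fully-adjacent triples, leaving only the $n$ stars. Once the combinatorial bookkeeping is set up correctly, the remaining work is routine polynomial arithmetic.
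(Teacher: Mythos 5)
Your proof is correct, but note that the paper itself offers no proof of Lemma~\ref{mm123}: it is imported verbatim from \cite{behmaram13,li14}, so there is no in-text argument to compare against. Your derivation is a clean, self-contained inclusion--exclusion: $M(G,1)=m$; $M(G,2)=\binom{m}{2}-3n$ since the $3n$ adjacent pairs are exactly the cherries and girth $\geqslant 4$ prevents double-counting; and $M(G,3)=\binom{m}{3}-3n(m-7)-N(C)-n$, where the classification of non-matching triples by the number of adjacent pairs (one: cherry plus a disjoint edge, with exactly $7$ edges meeting the cherry's vertices by triangle-freeness; two: a $P_4$, counted by $N(C)=4m$ as in Lemma~\ref{pre0}; three: a star $K_{1,3}$, triangles being excluded) is exhaustive and disjoint. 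Substituting $n=2h+8$, $m=3h+12$ reproduces all three stated polynomials, and the cube ($h=0$: $1,12,42,44,9$) confirms the values. Your route is essentially the ``subtract the bad configurations'' form of the same bookkeeping that the paper later systematizes in Lemma~\ref{receq}, where $M(G,k)(m-k)$ is expanded as $(k+1)M(G,k+1)$ plus correlated counts of small non-matching subgraphs; your version is more elementary and direct for these low orders, while the paper's recurrence formulation scales better to $k=4,5,6$ where the zoo of failure configurations grows. The only point worth making fully explicit in a polished write-up is the disjointness/exhaustiveness of the three classes, which follows exactly as you indicate from girth $\geqslant 4$ and $3$-regularity.
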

    \subsection{Recurrence relations}
       \begin{lemma}\label{receq}
                Let $G$ be a (4,6)-fullerene graph. We have\\                    (i)
                        \begin{subequations}\label{m6pr}
                            \begin{align}
                                M(G,5)\times(m-5)&=6\times M(G,6)+2\times N(R)+N(S), \label{m6pr1}\\
                                M(G,5)\times10\times2&=2\times N(R)+2\times N(S),\label{m6pr2}\\
                                M(G,4)\times 4\times 4 &=N(S)+4\times N(T)+2\times N(U)+N(Q), \label{m6pr3}\\
                                N(K)\times2\times2&=2\times N(U)+2\times N(Q)+8\times N(T); \label{m6pr4}
                            \end{align}
                        \end{subequations}
                    (ii)
                        \begin{subequations}\label{m5pr}
                            \begin{align}
                                M(G,4)\times(m-4)&= 5\times M(G,5)+2\times N(J)+N(K),\label{m5pr1}\\
                                M(G,4)\times8\times2&=2\times N(J)+2\times N(K),\label{m5pr2}\\
                                M(G,3)\times 3\times 4&=N(K)+4\times N(L)+2\times N(O)+N(P); \label{m5pr3}
                            \end{align}
                        \end{subequations}
                    (iii)
                        \begin{subequations}\label{m4pr}
                            \begin{align}
                                M(G,3)\times(m-3)&=4\times M(G,4)+2\times N(D)+N(E), \label{m4pr1}\\
                                M(G,3)\times6\times2&=2\times N(D)+2\times N(E),\label{m4pr2}\\
                                M(G,2)\times 2\times 4&= N(E)+4\times N(H)+2\times N(I).\label{m4pr3}
                            \end{align}
                        \end{subequations}
            \end{lemma}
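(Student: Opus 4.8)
The plan is to prove each of the ten displayed identities in Lemma~\ref{receq} by the same \emph{double-counting} principle: fix a target configuration, count in two ways the number of ways to extend or decompose it into one of the small subgraphs from Figure~\ref{collection}, and read off the equation. The left-hand sides always take a known count (an $M(G,k)$ or an $N(\cdot)$ already computed in Lemmas~\ref{pre0}--\ref{pre}) and multiply by the number of ``slots'' available for adding or removing edges; the right-hand sides partition the resulting objects by isomorphism type. The factors $(m-k)$, $k\times 2$, $k\times 4$ and so on are precisely the number of edges available, the number of endpoints of a matching times the two choices of direction, etc.

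\textbf{The counting scheme for each block.}
For the ``top'' equations \eqref{m6pr1}, \eqref{m5pr1}, \eqref{m4pr1}, I would start from a $k$-matching and append one more edge of $G$ disjoint from nothing in particular (there are $m-k$ edges total, but some create a $(k{+}1)$-matching, counted $k{+}1$ times, while others create a $k$-matching plus an edge sharing a vertex, yielding the path/star types $R,S$ or $J,K$ or $D,E$). For the ``middle'' equations \eqref{m6pr2}, \eqref{m5pr2}, \eqref{m4pr2}, I fix a $k$-matching and add an edge incident to one of its $2k$ endpoints in one of the two orientations, producing exactly the two ``one-shared-vertex'' types. For the ``bottom'' equations \eqref{m6pr3}, \eqref{m5pr3}, \eqref{m4pr3} I start from a smaller matching and attach a path or extra edge at a vertex (the factor $k\times 4$ counts an endpoint of the matching together with the $4$ ways to grow it), decomposing the result into the appropriate subgraph types. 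The extra relation \eqref{m6pr4}, which starts from $N(K)$, follows the same template: each $K$-subgraph has two degree-one vertices, each with two incident edges to append, and the resulting configurations sort into $U$, $Q$, and $T$.

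\textbf{The main obstacle.}
The delicate part is getting the \emph{multiplicities} exactly right, i.e.\ the integer coefficients $6,2,1,4$, etc. on the right-hand sides. Each coefficient records how many distinct build-up sequences produce a single copy of a given target subgraph, and this depends on the automorphisms of that subgraph and on how many of its edges can serve as the ``last added'' edge. For instance, in \eqref{m6pr1} a $6$-matching is reached from a $5$-matching in $6$ ways (any of its six edges is the appended one), explaining the coefficient $6$; an $R$-configuration (which contains a $5$-matching together with an edge meeting it) is reached in exactly $2$ ways and an $S$ in exactly $1$. I would verify each such constant by examining the symmetry of the corresponding picture in Figure~\ref{collection} and checking that no copy is over- or under-counted. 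A secondary subtlety, flagged in the Remark following Lemma~\ref{pre}, is that extensions near a square face behave differently (a path may close into a $4$-cycle), so I would treat square-incident configurations separately wherever a path of length $3$ or an edge sits on a square, exactly as in the computation of $N(P)$ and $N(V)$. Once these coefficients are pinned down, each identity is immediate from the partition of the extended objects into isomorphism classes.
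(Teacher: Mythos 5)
Your proposal follows essentially the same double-counting scheme as the paper's proof: build a configuration from a $k$-matching (or from a copy of $K$) by appending edges, count the ordered build-up sequences, and partition the outcomes by isomorphism type with correlation multiplicities determined by how many sequences produce each target subgraph. One small correction: in the third equation of each block the factor $k\times 4$ comes from choosing one \emph{edge} of the $k$-matching ($k$ ways) and then attaching one new edge at each of its two endpoints ($2\times 2$ ways in a cubic graph), not from choosing an endpoint of the matching; with that reading, and noting that the square-incident cases are absorbed automatically by the types $H$, $L$ and $T$ on the right-hand sides, your argument matches the paper's exactly.
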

            \begin{proof}
                We only give the proof of equation system~\eqref{m6pr}. The  proofs of systems~\eqref{m5pr} and~\eqref{m4pr} are essentially the same and omitted here.
                \par
                To prove \eqref{m6pr1},  we choose a 5-matching first and then add an arbitrary edge. We use an ordered pair to represent such a choice: the first coordinate indicates the chosen 5-matching and the second one stands for the chosen edge. By the elementary counting principle, the number of such ordered pairs (which equals the number of ways to do the above process) is
                \begin{displaymath}
                    M(G,5)\times (m-5).
                \end{displaymath}
                \par
                Hereinafter, we say a subgraph of $G$ and an ordered pair described above are correlated if the process represented by the pair leads to the subgraph. It is obvious that after the above process, the resulted subgraph of $G$ could only be a 6-matching, a subgraph $R$ or a subgraph $S$. Each 6-matching correlates~6 ordered pairs. Similarly, for each subgraph $R$ (respectively, $S$), there are two (respectively, one) correlated pairs. Thus, the number of representation pairs is
                \begin{displaymath}
                    6\times M(G,6)+2\times N(R)+N(S).
                \end{displaymath}
                \par
                Hence \eqref{m6pr1} is proved and we take in consideration a second process for the proof of~\eqref{m6pr2}. First, we choose a 5-matching and fix one of its vertices, then we choose one edge incident with the chosen vertex that is not in the 5-matching. Similarly, we define ordered triples to represent the above process: the first coordinate indicates the 5-matching chosen, while the second one is for the fixed vertex and the last one for the chosen edge. The number of such ordered triples is
                \begin{displaymath}
                    M(G,5)\times 10\times2.
                \end{displaymath}
                \par
                The resulted subgraph of $G$ after such a process could only be a subgraph $R$, or a subgraph $S$. Each subgraph $R$ correlates to two representation triples since we have 2 ways to determine the initially chosen 5-matching for a given a subgraph $R$. As for each subgraph $S$ we have two ways to determine the fixed vertex, each subgraph $S$ correlates to two triples. Thus, the number of representation triples equals
                \begin{displaymath}
                    2\times N(R)+2\times N(S).
                \end{displaymath}
                We have proved \eqref{m6pr2}.
                \par
                To prove \eqref{m6pr3}, we choose a 4-matching first and fix one  edge of this 4-matching, then we add one edge for each end vertex of the fixed edge. Again, the number of representation ordered triples is
                \begin{displaymath}
                    M(G,4)\times 4\times 4.
                \end{displaymath}
                \par
                Once we execute the above process, the resulted subgraph of $G$ should be  $S$,  $T$,  $U$, or  $Q$. Each subgraph $T$ correlates to~4 triples, since for a given $T$, we have 2 ways to determine the initial 4-matching and further if the 4-matching is determined we have 2 ways to determine the fixed edge. Similarly, each subgraph $U$ (respectively, $S$ and $Q$) correlates to two (respectively, one and one) ordered triples. Thus, the number of representation triples equals
                \begin{displaymath}
                    4\times N(T)+2\times N(U)+N(Q)+N(S).
                \end{displaymath}
                \par
                Therefore \eqref{m6pr3} is proved and we turn to~\eqref{m6pr4}. To this end, we choose a subgraph $K$ first and fix one end-vertex of the 3-length path in $K$, then we add one edge incident with the fixed vertex. And we use ordered triples to represent the process as above: the first coordinate indicates the subgraph $K$ chosen,  the second coordinate stands for the fixed vertex and the third one means the added edge. Then the number of representation triples is $N(K)\times2\times2$. The resulted graph of such a process is either a subgraph $U$, $Q$ or $T$. For each $U$ or $Q$, there are two triples correlated since we have two ways to determine the initial $K$. For each $T$, it is easy to check that there are 8 correlated triples. Hence~\eqref{m6pr4} holds.
            \end{proof}
By~\eqref{m5pr3}, we have
            \begin{corollary}\label{coroK}
                $
                    N(K)=54h^3+234h^2+180h+72.
                $
            \end{corollary}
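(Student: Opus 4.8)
The plan is to treat equation~\eqref{m5pr3} as a linear relation in the single unknown $N(K)$, every other term of which is already available in closed form. Solving~\eqref{m5pr3} for $N(K)$ gives
\[
    N(K) = 12\,M(G,3) - 4\,N(L) - 2\,N(O) - N(P),
\]
so the whole task reduces to substituting four previously established formulae and collecting terms by degree. No new combinatorics is needed; the recurrence~\eqref{m5pr3} has already done all the structural work.

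First I would import the value $M(G,3) = \tfrac{9}{2}h^3 + \tfrac{63}{2}h^2 + 65h + 44$ from Lemma~\ref{mm123} and multiply it by $12$. This clears the two fractions and produces a polynomial with integer coefficients whose leading term is $54h^3$, already agreeing with the claimed leading term of $N(K)$ and serving as a first consistency check. Next I would pull $N(L) = 18h + 24$ and $N(O) = 72h^2 + 240h + 120$ from Lemma~\ref{pre0}, together with $N(P) = 48h + 120$ from Lemma~\ref{pre}, and form the three products $4\,N(L)$, $2\,N(O)$, and $N(P)$.

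The final step is to subtract these three quantities from $12\,M(G,3)$ and read off the coefficient of each power of $h$. The cubic coefficient is untouched and stays $54$; the quadratic coefficient is altered only by $2\,N(O)$, giving $378 - 144 = 234$; while the linear and constant coefficients each receive contributions from all four ingredients and must be combined carefully.

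Since every input is an explicit polynomial, there is no genuine conceptual obstacle here — the entire argument is a substitution followed by degree-wise collection. The only point demanding care is the bookkeeping in the two lower-order coefficients: the linear term must collapse from $780h$ to $180h$ after removing $72h + 480h + 48h$, and the constant term must collapse from $528$ to $72$ after removing $96 + 240 + 120$. Verifying these two cancellations is what certifies the stated formula $N(K) = 54h^3 + 234h^2 + 180h + 72$.
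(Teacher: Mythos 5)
Your proposal is correct and follows exactly the paper's route: the paper derives Corollary~\ref{coroK} directly from~\eqref{m5pr3} by solving for $N(K)$ and substituting $M(G,3)$ from Lemma~\ref{mm123}, $N(L)$ and $N(O)$ from Lemma~\ref{pre0}, and $N(P)$ from Lemma~\ref{pre}. Your coefficient bookkeeping ($780-72-480-48=180$ and $528-96-240-120=72$) checks out.
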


            \begin{theorem}\label{res}
                For  a (4,6)-fullerene graph $G$, we have
                \begin{subequations}
                    \begin{align}
                        M(G,4)&=\dfrac{27}{8}h^4+\dfrac{81}{4}h^3+\dfrac{273}{8}h^2+\dfrac{117}{4}h+9,\label{mm4}\\
                        M(G,5)&=\dfrac{81}{40}h^5+\dfrac{27}{4}h^4-\dfrac{9}{8}h^3+\dfrac{39}{4}h^2-\dfrac{27}{5}h,\label{mm5}\\
                        M(G,6)&= \dfrac{81}{80}h^6-\dfrac{81}{80}h^5-\dfrac{99}{16}h^4+\dfrac{405}{16}h^3-\dfrac{2833}{40}h^2-\dfrac{123}{10}h-16+\dfrac{N(Q)}{6}.\label{mm6rec}
                    \end{align}
                \end{subequations}
            \end{theorem}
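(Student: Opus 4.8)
The plan is to obtain each of the three formulae as the solution of its recurrence system from Lemma~\ref{receq}, successively eliminating every auxiliary subgraph count until only the closed-form quantities furnished by Lemmas~\ref{pre0}, \ref{pre}, \ref{mm123} and Corollary~\ref{coroK} remain. In each system the third equation expresses a single ``bridging'' count in terms of already-known quantities, while subtracting the second equation from the first cancels the last remaining auxiliary count and isolates the matching number. Throughout I use $m=3h+12$ from Lemma~\ref{eulercac}(i) and the fact that $N(H)=6$: by Lemma~\ref{square} every $4$-cycle is a facial cycle, and $G$ has exactly six square faces.

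For $M(G,4)$ I start from system~\eqref{m4pr}. Equation~\eqref{m4pr3} gives $N(E)=8M(G,2)-4N(H)-2N(I)$; substituting $N(H)=6$, $N(I)=8m-24$ and the value of $M(G,2)$ yields $N(E)=36h^2+180h+168$. Subtracting~\eqref{m4pr2} from~\eqref{m4pr1} cancels $N(D)$ and leaves $4M(G,4)=M(G,3)(m-15)+N(E)$. Since $m-15=3(h-1)$, inserting the cubic $M(G,3)$ and $N(E)$ and dividing by $4$ produces the quartic~\eqref{mm4}. The argument for $M(G,5)$ has the same shape: equation~\eqref{m5pr3} isolates $N(K)$, which is exactly Corollary~\ref{coroK}, and subtracting~\eqref{m5pr2} from~\eqref{m5pr1} cancels $N(J)$ to give $5M(G,5)=M(G,4)(m-20)+N(K)$, with $m-20=3h-8$. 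Substituting the quartic~\eqref{mm4} and the cubic $N(K)$ and dividing by $5$ yields the quintic~\eqref{mm5}.

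The genuinely different case is $M(G,6)$, because system~\eqref{m6pr} carries five unknowns ($N(R),N(S),N(U),N(Q),M(G,6)$) but only four equations, so no purely algebraic manipulation can remove all of them; the count $N(Q)$ is not expressible as a polynomial in $h$ and must survive. I proceed as before, keeping $N(Q)$ as a formal parameter. Subtracting~\eqref{m6pr2} from~\eqref{m6pr1} cancels $N(R)$ and gives $6M(G,6)=M(G,5)(m-25)+N(S)$. Next I use~\eqref{m6pr4} to write $N(U)=2N(K)-N(Q)-4N(T)$ and feed this into~\eqref{m6pr3}; the delicate point is that this substitution eliminates $N(U)$ while cancelling one full copy of $N(Q)$ together with a term in $N(T)$, collapsing~\eqref{m6pr3} to $N(S)=16M(G,4)+4N(T)-4N(K)+N(Q)$ with exactly one residual $N(Q)$. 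Combining the two relations gives $6M(G,6)=M(G,5)(m-25)+16M(G,4)+4N(T)-4N(K)+N(Q)$, where $m-25=3h-13$.

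Substituting the polynomials for $M(G,4)$, $M(G,5)$, $N(T)$ (Lemma~\ref{pre}) and $N(K)$ (Corollary~\ref{coroK}) and dividing by $6$ then yields~\eqref{mm6rec}, with the irreducible term $N(Q)/6$ intact; as a check, the leading term $\tfrac{81}{40}h^5$ of $M(G,5)$ multiplied by $3h$ and divided by $6$ reproduces the leading coefficient $\tfrac{81}{80}h^6$. The main obstacle is thus conceptual rather than computational: one must recognise that, unlike the $M(G,4)$ and $M(G,5)$ systems which close completely, the $M(G,6)$ system is underdetermined, and that the cancellations above are arranged precisely so that a single $N(Q)/6$ remains. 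This residual term is exactly what forces the later structural treatment (Theorem~\ref{category}) and explains why there is no unified polynomial formula for $M(G,6)$.
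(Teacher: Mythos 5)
Your proposal is correct and follows essentially the same route as the paper: the paper forms the single linear combinations $\eqref{m4pr1}-\eqref{m4pr2}+\eqref{m4pr3}$, $\eqref{m5pr1}-\eqref{m5pr2}+\eqref{m5pr3}$ and $\eqref{m6pr1}-\eqref{m6pr2}+\eqref{m6pr3}-\eqref{m6pr4}$ to cancel the auxiliary counts all at once, which is algebraically identical to your stepwise eliminations, and then substitutes the known values $N(H)=6$, $N(I)$, $N(K)$, $N(T)$, etc. Your intermediate identities ($4M(G,4)=(m-15)M(G,3)+N(E)$, $5M(G,5)=(m-20)M(G,4)+N(K)$, $6M(G,6)=(m-25)M(G,5)+16M(G,4)+4N(T)-4N(K)+N(Q)$) and the resulting polynomials all check out.
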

            \begin{proof}
                Calculating $\eqref{m4pr1}-\eqref{m4pr2}+\eqref{m4pr3}$, $\eqref{m5pr1}-\eqref{m5pr2}+\eqref{m5pr3}$ and $\eqref{m6pr1}-\eqref{m6pr2}+\eqref{m6pr3}-\eqref{m6pr4}$, respectively, we get
                \begin{displaymath}\begin{split}
                    4M(G,4)+4N(H)+2N(I)&=(m-15)M(G,3)+8M(G,2);\\
                    5M(G,5)+4N(L)+2N(O)+N(P)&=(m-20)M(G,4)+12M(G,3);\\
                    6M(G,6)+4N(K)-4N(T)&=(m-25)M(G,5)+16M(G,4)+N(Q).
                \end{split}\end{displaymath}
                Equivalently,
                \begin{displaymath}\begin{split}
                    M(G,4)&=\dfrac{m-15}{4}M(G,3)+2M(G,2)-N(H)-\dfrac{1}{2}N(I);\\
                    M(G,5)&=\dfrac{m-20}{5}M(G,4)+\dfrac{12}{5}M(G,3)-\dfrac{4}{5}N(L)-\dfrac{2}{5}N(O)-\dfrac{1}{5}N(P);\\
                    M(G,6)&=\dfrac{m-25}{6}M(G,5)+\dfrac{8}{3}M(G,4)-\dfrac{2}{3}N(K)+\dfrac{2}{3}N(T)+\dfrac{1}{6}N(Q).
                \end{split}\end{displaymath}
                    That is a series of recurrence relations. Our theorem follows immediately.
            \end{proof}
            Theorem~\ref{res} has already given the expressions of $M(G,4)$ and $M(G,5)$, and the enumeration of 6-matchings is now reduced to the calculation of $N(Q)$. In addition, we have the following corollary from  Lemma~\ref{receq}.
            \begin{corollary}\label{coroEX}
                Let $G$ be a (4,6)-fullerene graph. We have
                \begin{align}
                    N(E)&=36h^2+180h+168;\label{coroEX1}\\
                    N(D)&=27h^3+153h^2+210h+96;\label{coroEX2}\\
                    N(J)&=27h^4+108h^3+39h^2+54h;\label{coroEX3}\\
                    N(U)&=108h^3+360h^2+252h+96-N(Q);\label{coroEX4}\\
                    N(S)&=54h^4+108h^3-282h^2-144h-96+N(Q);\label{coroEX5}\\
                    N(R)&=\dfrac{81}{4}h^5+\dfrac{27}{2}h^4-\dfrac{477}{4}h^3+\dfrac{759}{2}h^2+90h+96-N(Q).\label{coroEX6}
                \end{align}
            \end{corollary}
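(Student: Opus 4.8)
The plan is to read off each of the six quantities directly from the recurrence relations of Lemma~\ref{receq}, peeling them off in an order that keeps every right-hand side expressed in already-known data. All the ingredients are at hand: $m=3h+12$ from Lemma~\ref{eulercac}(i); the values $M(G,2)$, $M(G,3)$ from Lemma~\ref{mm123} together with $M(G,4)$, $M(G,5)$ from Theorem~\ref{res}; the counts $N(I)=8m-24=24h+72$ from Lemma~\ref{pre0}(i) and $N(T)=27h^2+27h+12$ from Lemma~\ref{pre}; and $N(K)=54h^3+234h^2+180h+72$ from Corollary~\ref{coroK}. The only count not yet recorded is $N(H)$: since by Lemma~\ref{square} every $4$-cycle of $G$ is a facial square, and $G$ has exactly six square faces, we have $N(H)=6$.

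First I would extract $N(E)$ from~\eqref{m4pr3}. Rewriting that relation as $N(E)=8M(G,2)-4N(H)-2N(I)$ and substituting $N(H)=6$, $N(I)=24h+72$ and the value of $M(G,2)$ gives $N(E)=36h^2+180h+168$. Then~\eqref{m4pr2}, which states $N(D)+N(E)=6M(G,3)$, yields $N(D)=6M(G,3)-N(E)=27h^3+153h^2+210h+96$. Next, \eqref{m5pr2} reads $N(J)+N(K)=8M(G,4)$, so $N(J)=8M(G,4)-N(K)=27h^4+108h^3+39h^2+54h$. These three identities are unconditional.

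The remaining three counts are pinned down only up to the still-unknown quantity $N(Q)$, and here I would use the three order-$6$ relations. From~\eqref{m6pr4}, namely $4N(K)=2N(U)+2N(Q)+8N(T)$, I solve $N(U)=2N(K)-N(Q)-4N(T)=108h^3+360h^2+252h+96-N(Q)$. Feeding this together with $N(T)$ and $M(G,4)$ into~\eqref{m6pr3}, written as $N(S)=16M(G,4)-4N(T)-2N(U)-N(Q)$, the two copies of $N(Q)$ combine to leave a single $+N(Q)$ and produce $N(S)=54h^4+108h^3-282h^2-144h-96+N(Q)$. Finally~\eqref{m6pr2}, which gives $N(R)+N(S)=10M(G,5)$, yields $N(R)=10M(G,5)-N(S)$; subtracting the $N(S)$ just found (whose $+N(Q)$ flips to $-N(Q)$) delivers the stated expression for $N(R)$.

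Since each step is a single linear substitution, there is no conceptual obstacle; the only real work is polynomial bookkeeping. The one point that needs care is tracking the numerical coefficients attached to each symbol in Lemma~\ref{receq} — for instance that~\eqref{m5pr2} carries $N(J)+N(K)=8M(G,4)$ rather than $16M(G,4)$, and that the $N(Q)$ terms in~\eqref{m6pr3} and~\eqref{m6pr4} must be combined with the correct signs so that $N(S)$ and $N(R)$ each retain exactly one copy of $N(Q)$.
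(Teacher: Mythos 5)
Your proposal is correct and follows essentially the same route as the paper: each quantity is isolated from the corresponding relation in Lemma~\ref{receq} in the same order ($N(E)$ from \eqref{m4pr3}, $N(D)$ from \eqref{m4pr2}, $N(J)$ from \eqref{m5pr2}, $N(U)$ from \eqref{m6pr4}, $N(S)$ from \eqref{m6pr3} combined with \eqref{m6pr4}, and $N(R)$ from \eqref{m6pr2}), with the known counts substituted in. The only cosmetic difference is that the paper obtains $N(S)$ by subtracting \eqref{m6pr4} from \eqref{m6pr3} rather than by substituting the expression for $N(U)$, which is algebraically identical; your explicit justification $N(H)=6$ via Lemma~\ref{square} is a fact the paper uses implicitly.
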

            \begin{proof}
                ~\eqref{m4pr3} gives $N(E)=8M(G,2)-4N(H)-2N(I)$, so we have~\eqref{coroEX1}.
                ~\eqref{m4pr2} gives $N(D)=6M(G,3)-N(E)$, so~\eqref{coroEX2} follows.
                By~\eqref{m5pr2}, $N(J)=8M(G,4)-N(K)$, which leads to~\eqref{coroEX3}.
                By~\eqref{m6pr4}, $N(U)=2N(K)-4N(T)-N(Q)$, which leads to ~\eqref{coroEX4}.
                Substracting~\eqref{m6pr3} with~\eqref{m6pr4}, we get $N(S)=16M(G,4)-4N(K)+4N(T)+N(Q)$, so~\eqref{coroEX5} holds.
                It follows from~\eqref{m6pr2} that $N(R)=10M(G,5)-N(S)$, and~\eqref{coroEX6} is proved.
            \end{proof}

    \subsection{Number of 6-matchings}
        In the following, we calculate $N(Q)$ for (4,6)-fullerene graphs according to their different structures.
            \begin{lemma}\label{qtub}
                Let $G=T_t\ (t\geqslant1)$ be a tubular (4,6)-fullerene graph. Then
                \begin{displaymath}
                    N(Q)=144h^2+320h+42.
                \end{displaymath}
            \end{lemma}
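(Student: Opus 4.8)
The plan is to count copies of $Q$ directly from the fact that $Q$ is the disjoint union of a path of length $5$ (a copy of the subgraph $P$) and one further edge (a copy of $K_2$). Since these two components are non-isomorphic, every copy of $Q$ determines its $P$-part and its $K_2$-part unambiguously, so
\[
N(Q)=\sum_{P'}\bigl(\text{number of edges of }G\text{ vertex-disjoint from }P'\bigr),
\]
where $P'$ ranges over the $N(P)=48h+120$ paths of length $5$ in $G$ (Lemma~\ref{pre}). As $G$ is cubic and $\lvert V(P')\rvert=6$, the degree sum over $V(P')$ is $18$; if $i(P')$ counts the edges with both ends in $V(P')$, then $18-i(P')$ edges meet $V(P')$, so $m-18+i(P')$ edges are disjoint from $P'$. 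Writing $i(P')=5+c(P')$, with $c(P')$ the number of chords of the path, this gives the exact reduction
\[
N(Q)=N(P)\,(m-13)+\sum_{P'}c(P')=(48h+120)(3h-1)+\sum_{P'}c(P').
\]

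First I would evaluate the chord sum. Because $G$ is bipartite, a chord of $P'=p_1p_2\cdots p_6$ joins vertices whose indices differ by an odd number $\geqslant 3$, so it is either a \emph{long} chord $p_1p_6$ or a \emph{short} chord $p_ip_{i+3}$. A long chord closes $P'$ into a $6$-cycle, and each $6$-cycle yields exactly $6$ such incidences (delete any one of its six edges); by the $6$-cycle count established above, $T_t$ has $4t+7$ of them, contributing $6(4t+7)=8h+42$ once we use $h=3t$. A short chord $p_ip_{i+3}$ forces $p_ip_{i+1}p_{i+2}p_{i+3}$ to be a $4$-cycle, which by Lemma~\ref{square} is one of the six facial squares; hence the short chords are anchored at squares and the associated $(P',\text{chord})$ incidences can be enumerated locally.

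The decisive and most delicate step — the one that forces the tubular case to be handled separately from the others — is this local enumeration around the squares. In $T_t$ each square lies in a square-cap of three mutually edge-sharing squares, so its neighbourhood is fully prescribed; I would run through the three ways a square can occupy four consecutive vertices of a $5$-path (at an end, giving a square with a pendant path of length $2$, or in the middle, giving a square with two pendant edges at adjacent corners), being careful not to double count the two traversal directions nor the choice of which square edge is the chord, and allowing the one admissible double-square case $\{p_1p_4,p_3p_6\}$ (two squares sharing a single edge, which Lemma~\ref{eulercac}(iii) permits). Summed over the six squares this yields the constant $120$, independent of $t$.

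Combining the two contributions gives $\sum_{P'}c(P')=8h+162$, whence
\[
N(Q)=(144h^2+312h-120)+(8h+162)=144h^2+320h+42,
\]
as claimed. The hard part will be the short-chord bookkeeping at the caps; I would additionally verify the smallest tubes $T_1,T_2$ by hand, since there the two caps lie close together and one must confirm that no $5$-path interacts simultaneously with both caps in a way that produces extra chord incidences beyond those already counted.
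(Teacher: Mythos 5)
Your reduction is, at bottom, the same count as the paper's: the paper also obtains $N(Q)$ by choosing a copy of $P$ and then a vertex-disjoint edge, and its case analysis (path contained in a $6$-cycle; three consecutive edges in a square; everything else) is exactly the classification of $5$-paths by their long and short chords, with $m-13+c(P')$ available edges in each case. What you do differently is only the bookkeeping: instead of partitioning the paths into mutually exclusive classes (which forces the paper to separate ``head three edges in a square'' from ``embedded in a dual-square'' and from ``lying on a dual-square $6$-cycle''), you sum chord incidences, which is cleaner and less error-prone. Your two totals are correct: the long chords give $6(4t+7)=8h+42$ by the $6$-cycle census, and the short-chord incidences total $120$, matching the paper's $36+12+48+2\cdot12$. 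Moreover, the local count you defer is easier in your formulation than you fear: a short-chord incidence is the same thing as a triple consisting of a square $S$, a three-edge subpath $\pi$ of $S$, and an extension of $\pi$ to a $5$-path with $\pi$ occupying a prescribed window, and for \emph{every} square this count is $4$ (middle window: both pendant edges are forced, and they can neither coincide nor re-enter $S$, by bipartiteness) plus $4\times2\times2=16$ (end window: the first new vertex is the unique neighbour of the endpoint outside $S$, the second has two choices, neither of which can land in $S$ by bipartiteness together with Lemma~\ref{eulercac}(iii) and Lemma~\ref{square}). That is $20$ per square and $6\times 20=120$ in all, with no cap-specific analysis and no possible interaction between the two caps; the hand-checks of $T_1$ and $T_2$ are therefore unnecessary.
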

            \begin{proof}
                To calculate $N(Q)$, we choose a path of length 5 (a subgraph $P$) first and then choose an edge disjoint with the path. We should notice that a 5-length path may be in a 6-cycle and some 3 consecutive edges  of the path may be located in a 4-cycle. \\
                \begin{figure*}[h]
                    \centering
                    \subfigure{
                        \begin{tikzpicture}[scale=0.09]
                            \tikzstyle{vertex}=[draw,circle,minimum size=4pt,inner sep=0pt]
                            \tikzstyle{edge} = [draw,line width=0.7pt,-,black]
                            \tikzstyle{selected edge} = [draw,line width=1.5pt,-,black]
                            \node[vertex] (v0) at (0,0)  {};
                            \node[vertex] (v1) at (5,0)  {};
                            \node[vertex] (v2) at (2.5,4.32)   {};
                            \node[vertex] (v3) at (-2.5,4.32)   {};
                            \node[vertex] (v4) at (-5,0)  {};
                            \node[vertex] (v5) at (-2.5,-4.32)   {};
                            \node[vertex] (v6) at (2.5,-4.32)  {};
                            \node[vertex] (v7) at (10,0)  {};
                            \node[vertex] (v8) at (5,8.64)   {};
                            \node[vertex] (v9) at (-5,8.64)   {};
                            \node[vertex] (v10) at (-10,0)  {};
                            \node[vertex] (v11) at (-5,-8.64)   {};
                            \node[vertex] (v12) at (5,-8.64)  {};
                            \draw[edge] (v1)  -- (v2);
                            \draw[edge] (v2)  -- (v3);
                            \draw[edge] (v3)  -- (v4);
                            \draw[edge] (v4)  -- (v5);
                            \draw[edge] (v5)  -- (v6);
                            \draw[selected edge] (v6)  -- (v1);
                            \draw[selected edge] (v0)  -- (v1);
                            \draw[edge] (v0)  -- (v3);
                            \draw[selected edge] (v0)  -- (v5);
                            \draw[edge] (v7)  -- (v8);
                            \draw[edge] (v8)  -- (v9);
                            \draw[edge] (v9)  -- (v10);
                            \draw[edge] (v10)  -- (v11);
                            \draw[selected edge] (v11)  -- (v12);
                            \draw[edge] (v12)  -- (v7);
                            \draw[edge] (v2)  -- (v8);
                            \draw[edge] (v4)  -- (v10);
                            \draw[selected edge] (v6)  -- (v12);
                        \end{tikzpicture}}
                    \subfigure{
                        \begin{tikzpicture}[scale=0.09]
                            \tikzstyle{vertex}=[draw,circle,minimum size=4pt,inner sep=0pt]
                            \tikzstyle{edge} = [draw,line width=0.7pt,-,black]
                            \tikzstyle{selected edge} = [draw,line width=1.5pt,-,black]
                            \node[vertex] (v0) at (0,0)  {};
                            \node[vertex] (v1) at (5,0)  {};
                            \node[vertex] (v2) at (2.5,4.32)   {};
                            \node[vertex] (v3) at (-2.5,4.32)   {};
                            \node[vertex] (v4) at (-5,0)  {};
                            \node[vertex] (v5) at (-2.5,-4.32)   {};
                            \node[vertex] (v6) at (2.5,-4.32)  {};
                            \node[vertex] (v7) at (10,0)  {};
                            \node[vertex] (v8) at (5,8.64)   {};
                            \node[vertex] (v9) at (-5,8.64)   {};
                            \node[vertex] (v10) at (-10,0)  {};
                            \node[vertex] (v11) at (-5,-8.64)   {};
                            \node[vertex] (v12) at (5,-8.64)  {};
                            \draw[edge] (v1)  -- (v2);
                            \draw[edge] (v2)  -- (v3);
                            \draw[edge] (v3)  -- (v4);
                            \draw[edge] (v4)  -- (v5);
                            \draw[edge] (v5)  -- (v6);
                            \draw[selected edge] (v6)  -- (v1);
                            \draw[selected edge] (v0)  -- (v1);
                            \draw[edge] (v0)  -- (v3);
                            \draw[selected edge] (v0)  -- (v5);
                            \draw[edge] (v7)  -- (v8);
                            \draw[edge] (v8)  -- (v9);
                            \draw[edge] (v9)  -- (v10);
                            \draw[edge] (v10)  -- (v11);
                            \draw[edge] (v11)  -- (v12);
                            \draw[selected edge] (v12)  -- (v7);
                            \draw[edge] (v2)  -- (v8);
                            \draw[edge] (v4)  -- (v10);
                            \draw[selected edge] (v6)  -- (v12);
                        \end{tikzpicture}}
                    \subfigure{
                        \begin{tikzpicture}[scale=0.09]
                            \tikzstyle{vertex}=[draw,circle,minimum size=4pt,inner sep=0pt]
                            \tikzstyle{edge} = [draw,line width=0.7pt,-,black]
                            \tikzstyle{selected edge} = [draw,line width=1.5pt,-,black]
                            \node[vertex] (v0) at (0,0)  {};
                            \node[vertex] (v1) at (5,0)  {};
                            \node[vertex] (v2) at (2.5,4.32)   {};
                            \node[vertex] (v3) at (-2.5,4.32)   {};
                            \node[vertex] (v4) at (-5,0)  {};
                            \node[vertex] (v5) at (-2.5,-4.32)   {};
                            \node[vertex] (v6) at (2.5,-4.32)  {};
                            \node[vertex] (v7) at (10,0)  {};
                            \node[vertex] (v8) at (5,8.64)   {};
                            \node[vertex] (v9) at (-5,8.64)   {};
                            \node[vertex] (v10) at (-10,0)  {};
                            \node[vertex] (v11) at (-5,-8.64)   {};
                            \node[vertex] (v12) at (5,-8.64)  {};
                            \draw[selected edge] (v1)  -- (v2);
                            \draw[edge] (v2)  -- (v3);
                            \draw[edge] (v3)  -- (v4);
                            \draw[edge] (v4)  -- (v5);
                            \draw[selected edge] (v5)  -- (v6);
                            \draw[edge] (v6)  -- (v1);
                            \draw[selected edge] (v0)  -- (v1);
                            \draw[edge] (v0)  -- (v3);
                            \draw[selected edge] (v0)  -- (v5);
                            \draw[edge] (v7)  -- (v8);
                            \draw[edge] (v8)  -- (v9);
                            \draw[edge] (v9)  -- (v10);
                            \draw[edge] (v10)  -- (v11);
                            \draw[edge] (v11)  -- (v12);
                            \draw[edge] (v12)  -- (v7);
                            \draw[selected edge] (v2)  -- (v8);
                            \draw[edge] (v4)  -- (v10);
                            \draw[edge] (v6)  -- (v12);
                        \end{tikzpicture}}
                    \subfigure{
                        \begin{tikzpicture}[scale=0.09]
                            \tikzstyle{vertex}=[draw,circle,minimum size=4pt,inner sep=0pt]
                            \tikzstyle{edge} = [draw,line width=0.7pt,-,black]
                            \tikzstyle{selected edge} = [draw,line width=1.5pt,-,black]
                            \node[vertex] (v0) at (0,0)  {};
                            \node[vertex] (v1) at (5,0)  {};
                            \node[vertex] (v2) at (2.5,4.32)   {};
                            \node[vertex] (v3) at (-2.5,4.32)   {};
                            \node[vertex] (v4) at (-5,0)  {};
                            \node[vertex] (v5) at (-2.5,-4.32)   {};
                            \node[vertex] (v6) at (2.5,-4.32)  {};
                            \node[vertex] (v7) at (10,0)  {};
                            \node[vertex] (v8) at (5,8.64)   {};
                            \node[vertex] (v9) at (-5,8.64)   {};
                            \node[vertex] (v10) at (-10,0)  {};
                            \node[vertex] (v11) at (-5,-8.64)   {};
                            \node[vertex] (v12) at (5,-8.64)  {};
                            \draw[edge] (v1)  -- (v2);
                            \draw[edge] (v2)  -- (v3);
                            \draw[edge] (v3)  -- (v4);
                            \draw[selected edge] (v4)  -- (v5);
                            \draw[edge] (v5)  -- (v6);
                            \draw[selected edge] (v6)  -- (v1);
                            \draw[selected edge] (v0)  -- (v1);
                            \draw[edge] (v0)  -- (v3);
                            \draw[selected edge] (v0)  -- (v5);
                            \draw[edge] (v7)  -- (v8);
                            \draw[edge] (v8)  -- (v9);
                            \draw[edge] (v9)  -- (v10);
                            \draw[edge] (v10)  -- (v11);
                            \draw[edge] (v11)  -- (v12);
                            \draw[edge] (v12)  -- (v7);
                            \draw[edge] (v2)  -- (v8);
                            \draw[selected edge] (v4)  -- (v10);
                            \draw[edge] (v6)  -- (v12);
                        \end{tikzpicture}}\\
                    \subfigure{
                        \begin{tikzpicture}[scale=0.09]
                            \tikzstyle{vertex}=[draw,circle,minimum size=4pt,inner sep=0pt]
                            \tikzstyle{edge} = [draw,line width=0.7pt,-,black]
                            \tikzstyle{selected edge} = [draw,line width=1.5pt,-,black]
                            \node[vertex] (v0) at (0,0)  {};
                            \node[vertex] (v1) at (5,0)  {};
                            \node[vertex] (v2) at (2.5,4.32)   {};
                            \node[vertex] (v3) at (-2.5,4.32)   {};
                            \node[vertex] (v4) at (-5,0)  {};
                            \node[vertex] (v5) at (-2.5,-4.32)   {};
                            \node[vertex] (v6) at (2.5,-4.32)  {};
                            \node[vertex] (v7) at (10,0)  {};
                            \node[vertex] (v8) at (5,8.64)   {};
                            \node[vertex] (v9) at (-5,8.64)   {};
                            \node[vertex] (v10) at (-10,0)  {};
                            \node[vertex] (v11) at (-5,-8.64)   {};
                            \node[vertex] (v12) at (5,-8.64)  {};
                            \draw[edge] (v1)  -- (v2);
                            \draw[edge] (v2)  -- (v3);
                            \draw[edge] (v3)  -- (v4);
                            \draw[edge] (v4)  -- (v5);
                            \draw[selected edge] (v5)  -- (v6);
                            \draw[edge] (v6)  -- (v1);
                            \draw[selected edge] (v0)  -- (v1);
                            \draw[edge] (v0)  -- (v3);
                            \draw[selected edge] (v0)  -- (v5);
                            \draw[edge] (v7)  -- (v8);
                            \draw[edge] (v8)  -- (v9);
                            \draw[edge] (v9)  -- (v10);
                            \draw[edge] (v10)  -- (v11);
                            \draw[selected edge] (v11)  -- (v12);
                            \draw[edge] (v12)  -- (v7);
                            \draw[edge] (v2)  -- (v8);
                            \draw[edge] (v4)  -- (v10);
                            \draw[selected edge] (v6)  -- (v12);
                        \end{tikzpicture}}
                    \subfigure{
                        \begin{tikzpicture}[scale=0.09]
                            \tikzstyle{vertex}=[draw,circle,minimum size=4pt,inner sep=0pt]
                            \tikzstyle{edge} = [draw,line width=0.7pt,-,black]
                            \tikzstyle{selected edge} = [draw,line width=1.5pt,-,black]
                            \node[vertex] (v0) at (0,0)  {};
                            \node[vertex] (v1) at (5,0)  {};
                            \node[vertex] (v2) at (2.5,4.32)   {};
                            \node[vertex] (v3) at (-2.5,4.32)   {};
                            \node[vertex] (v4) at (-5,0)  {};
                            \node[vertex] (v5) at (-2.5,-4.32)   {};
                            \node[vertex] (v6) at (2.5,-4.32)  {};
                            \node[vertex] (v7) at (10,0)  {};
                            \node[vertex] (v8) at (5,8.64)   {};
                            \node[vertex] (v9) at (-5,8.64)   {};
                            \node[vertex] (v10) at (-10,0)  {};
                            \node[vertex] (v11) at (-5,-8.64)   {};
                            \node[vertex] (v12) at (5,-8.64)  {};
                            \draw[edge] (v1)  -- (v2);
                            \draw[edge] (v2)  -- (v3);
                            \draw[edge] (v3)  -- (v4);
                            \draw[edge] (v4)  -- (v5);
                            \draw[selected edge] (v5)  -- (v6);
                            \draw[edge] (v6)  -- (v1);
                            \draw[selected edge] (v0)  -- (v1);
                            \draw[edge] (v0)  -- (v3);
                            \draw[selected edge] (v0)  -- (v5);
                            \draw[edge] (v7)  -- (v8);
                            \draw[edge] (v8)  -- (v9);
                            \draw[edge] (v9)  -- (v10);
                            \draw[edge] (v10)  -- (v11);
                            \draw[edge] (v11)  -- (v12);
                            \draw[selected edge] (v12)  -- (v7);
                            \draw[edge] (v2)  -- (v8);
                            \draw[edge] (v4)  -- (v10);
                            \draw[selected edge] (v6)  -- (v12);
                        \end{tikzpicture}}
                        \subfigure{
                    \begin{tikzpicture}[scale=0.09]
                        \tikzstyle{vertex}=[draw,circle,minimum size=4pt,inner sep=0pt]
                            \tikzstyle{edge} = [draw,line width=0.7pt,-,black]
                            \tikzstyle{selected edge} = [draw,line width=1.5pt,-,black]
                            \node[vertex] (v0) at (0,0)  {};
                            \node[vertex] (v1) at (5,0)  {};
                            \node[vertex] (v2) at (2.5,4.32)   {};
                            \node[vertex] (v3) at (-2.5,4.32)   {};
                            \node[vertex] (v4) at (-5,0)  {};
                            \node[vertex] (v5) at (-2.5,-4.32)   {};
                            \node[vertex] (v6) at (2.5,-4.32)  {};
                            \node[vertex] (v7) at (10,0)  {};
                            \node[vertex] (v8) at (5,8.64)   {};
                            \node[vertex] (v9) at (-5,8.64)   {};
                            \node[vertex] (v10) at (-10,0)  {};
                            \node[vertex] (v11) at (-5,-8.64)   {};
                            \node[vertex] (v12) at (5,-8.64)  {};
                            \draw[selected edge] (v1)  -- (v2);
                            \draw[edge] (v2)  -- (v3);
                            \draw[edge] (v3)  -- (v4);
                            \draw[edge] (v4)  -- (v5);
                            \draw[selected edge] (v5)  -- (v6);
                            \draw[selected edge] (v6)  -- (v1);
                            \draw[edge] (v0)  -- (v1);
                            \draw[edge] (v0)  -- (v3);
                            \draw[selected edge] (v0)  -- (v5);
                            \draw[edge] (v7)  -- (v8);
                            \draw[edge] (v8)  -- (v9);
                            \draw[edge] (v9)  -- (v10);
                            \draw[edge] (v10)  -- (v11);
                            \draw[edge] (v11)  -- (v12);
                            \draw[edge] (v12)  -- (v7);
                            \draw[selected edge] (v2)  -- (v8);
                            \draw[edge] (v4)  -- (v10);
                            \draw[edge] (v6)  -- (v12);
                        \end{tikzpicture}}
                    \subfigure{
                        \begin{tikzpicture}[scale=0.09]
                            \tikzstyle{vertex}=[draw,circle,minimum size=4pt,inner sep=0pt]
                            \tikzstyle{edge} = [draw,line width=0.7pt,-,black]
                            \tikzstyle{selected edge} = [draw,line width=1.5pt,-,black]
                            \node[vertex] (v0) at (0,0)  {};
                            \node[vertex] (v1) at (5,0)  {};
                            \node[vertex] (v2) at (2.5,4.32)   {};
                            \node[vertex] (v3) at (-2.5,4.32)   {};
                            \node[vertex] (v4) at (-5,0)  {};
                            \node[vertex] (v5) at (-2.5,-4.32)   {};
                            \node[vertex] (v6) at (2.5,-4.32)  {};
                            \node[vertex] (v7) at (10,0)  {};
                            \node[vertex] (v8) at (5,8.64)   {};
                            \node[vertex] (v9) at (-5,8.64)   {};
                            \node[vertex] (v10) at (-10,0)  {};
                            \node[vertex] (v11) at (-5,-8.64)   {};
                            \node[vertex] (v12) at (5,-8.64)  {};
                            \draw[edge] (v1)  -- (v2);
                            \draw[edge] (v2)  -- (v3);
                            \draw[edge] (v3)  -- (v4);
                            \draw[selected edge] (v4)  -- (v5);
                            \draw[selected edge] (v5)  -- (v6);
                            \draw[selected edge] (v6)  -- (v1);
                            \draw[selected edge] (v0)  -- (v1);
                            \draw[edge] (v0)  -- (v3);
                            \draw[edge] (v0)  -- (v5);
                            \draw[edge] (v7)  -- (v8);
                            \draw[edge] (v8)  -- (v9);
                            \draw[edge] (v9)  -- (v10);
                            \draw[edge] (v10)  -- (v11);
                            \draw[edge] (v11)  -- (v12);
                            \draw[edge] (v12)  -- (v7);
                            \draw[edge] (v2)  -- (v8);
                            \draw[selected edge] (v4)  -- (v10);
                            \draw[edge] (v6)  -- (v12);
                        \end{tikzpicture}}
                    \caption{Examples of 5-length path having its head 3 edges in a square.}
                    \label{illustubf1}
                \end{figure*}
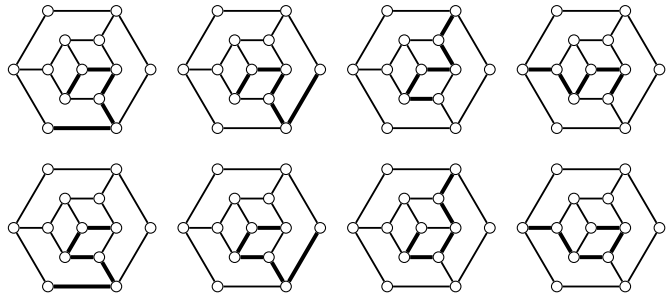
                \begin{figure*}[h]
                    \centering
                    \begin{tikzpicture}[scale=1]
                        \tikzstyle{vertex}=[draw,circle,minimum size=4pt,inner sep=0pt]
                        \tikzstyle{edge} = [draw,line width=0.7pt,-,black]
                        \tikzstyle{selected edge} = [draw,line width=1.5pt,-,black]
                        \node[vertex] (v1) at (0,0)  {};
                        \node[vertex] (v2) at (1,0)  {};
                        \node[vertex] (v3) at (2,0)  {};
                        \node[vertex] (v4) at (2,1)  {};
                        \node[vertex] (v5) at (1,1)  {};
                        \node[vertex] (v6) at (0,1)  {};
                        \draw[edge] (v1)  -- (v2);
                        \draw[selected edge] (v2)  -- (v3);
                        \draw[selected edge] (v3)  -- (v4);
                        \draw[edge] (v4)  -- (v5);
                        \draw[selected edge] (v5)  -- (v6);
                        \draw[selected edge] (v6)  -- (v1);
                        \draw[selected edge] (v2)  -- (v5);
                    \end{tikzpicture}
                    \caption{5-length path ``embedded" in a dual-square.}
                    \label{illustubf2}
                \end{figure*}
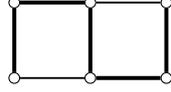
                \par
                First we consider the case  that the 5-length path chosen is in a 6-cycle. For different types of 6-cycle, hexagonal faces, boundary of square-cap with or without hexagon-layers and dual-square, we have $m-12$, $m-12$ and $m-11$ ways to add one edge, respectively. Hence, the number of subgraph $Q$ in this case is
                \begin{displaymath}
                    h\times6\times(m-12)+(t+1)\times6\times(m-12)+6\times6\times(m-11).
                \end{displaymath}
                \par
                Then we consider the case that some 3 consecutive edges of the 5-length path chosen are in a 4-cycle. If the inner 3 edges of the path is in a square (not including the circumstance where the whole path is in a dual-square), we have $6\times2$ such paths and $m-12$ ways to select the nomadic edge. If the 5-length path has its ``head"~3 edges in a square as shown in Figure~\ref{illustubf1}, then the number of  related subgraph $Q$ is $6\times8\times(m-12)$. If the path is embedded in a dual-square as shown in Figure~\ref{illustubf2}, the number of $Q$ is $6\times2\times(m-11)$.
                \par
                In the remaining cases, the number of $Q$ is
                \begin{displaymath}
                    [N(P)-h\times6-(t+1)\times6-6\times6-6\times2-6\times8-6\times2]\times(m-13).
                \end{displaymath}
                Thus, we have
                \begin{displaymath}\begin{split}
                 N(Q)&=6h\times(m-12)+(t+1)\times6\times(m-12)+36\times(m-11)\\
                    &{\ }+12\times(m-12)+48\times(m-12)+12\times(m-11)\\
                    &{\ }+[N(P)-6h-(t+1)\times6-36-12-48-12]\times(m-13).\\
                \end{split}\end{displaymath}
                Note that $h=3t$, and the proof is complete.
            \end{proof}
        \begin{lemma}\label{qlant}
            For a (4,6)-fullerene graph $G\notin\mathcal{T}$ other than the cube, we have $N(Q)=144h^2+318h+6y$, where  $y$ is the number of dual-squares of $G$.
        \end{lemma}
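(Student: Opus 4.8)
The plan is to count subgraphs isomorphic to $Q$ by exploiting that $Q$ is the disjoint union of a path $P$ of length $5$ and a single edge. Since such a subgraph decomposes uniquely into its $P_6$-component and its $K_2$-component, we have $N(Q)=\sum_{P} d(P)$, where the sum runs over all $5$-paths $P$ of $G$ and $d(P)$ is the number of edges of $G$ vertex-disjoint from $P$. First I would evaluate $d(P)$ locally. Writing $x(P)$ for the number of \emph{chords} of $P$ (edges of $G$ joining two vertices of $P$ but not lying on $P$), the six vertices of $P$ have total degree $18$ by cubicity; of this, $2\cdot 5+2x(P)$ is used by edges with both ends on $P$, leaving $8-2x(P)$ pendant edges, so $V(P)$ meets $5+x(P)+(8-2x(P))=13-x(P)$ edges. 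Hence $d(P)=m-13+x(P)$ and
\begin{displaymath}
    N(Q)=(m-13)\,N(P)+\sum_{P} x(P).
\end{displaymath}
With $N(P)=48h+120$ and $m=3h+12$ from Lemmas~\ref{pre0}--\ref{pre} and~\ref{eulercac}, everything reduces to computing $\sum_P x(P)$.

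Next I would classify the chords. A chord together with the sub-path of $P$ joining its ends forms a cycle; as $G$ is bipartite of girth $4$, this cycle has length $4$ or $6$. A length-$6$ chord must join the two endpoints (the chord $p_0p_5$), closing $P$ into a $6$-cycle, while a length-$4$ chord must have the form $p_ip_{i+3}$ with $i\in\{0,1,2\}$, so that $p_ip_{i+1}p_{i+2}p_{i+3}$ is a $4$-cycle and hence, by Lemma~\ref{square}, a square face. Therefore $\sum_P x(P)=A+B$, where $A$ counts pairs (path, closing $6$-cycle) and $B$ counts pairs (path, square lying on three consecutive edges). The term $A$ is immediate: $p_0p_5\in E(G)$ exactly when $P$ arises from a $6$-cycle by deleting one edge, and by Lemma~\ref{6cycle1} the $6$-cycles of $G$ are precisely the $h$ hexagonal faces and the $y$ dual-squares (there are no square-caps since $G\notin\mathcal{T}$); each yields $6$ such paths and each such path determines its $6$-cycle uniquely, so $A=6(h+y)$.

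The heart of the argument is the computation of $B$, which I would carry out over oriented paths, so that $B=\tfrac12\sum_{\vec P}\big([p_0p_3]+[p_1p_4]+[p_2p_5]\big)$. Each of the six squares is traversed along three consecutive sides in $8$ oriented ways (four starting vertices, two directions). For the interior position $p_1p_4$ both ends of the path are then forced, being the unique third neighbours of $p_1$ and of $p_4$, giving $6\cdot 8=48$ oriented paths; for each end position $p_0p_3$ and $p_2p_5$ one end is already a path-endpoint while the opposite end extends by two further edges, the last of which may be chosen in $2$ ways, giving $6\cdot 8\cdot 2=96$ each. The main obstacle is exactly to verify that every such extension yields a \emph{simple} path and that distinct choices yield distinct paths: this is where the structural input enters, since any coincidence among the new vertices would force either a triangle (excluded by bipartiteness) or two faces sharing two edges (excluded by Lemma~\ref{eulercac}(iii)). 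Granting this, $\sum_{\vec P}(\cdots)=96+48+96=240$, whence $B=120$.

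Finally I would assemble the pieces. Combining $\sum_P x(P)=A+B=6(h+y)+120$ with the displayed identity and $m-13=3h-1$ gives
\begin{displaymath}
    N(Q)=(3h-1)(48h+120)+6(h+y)+120=144h^{2}+318h+6y,
\end{displaymath}
as claimed. I expect the only delicate step to be the distinctness and simplicity checks in the computation of $B$ (and the analogous bijection used for $A$); everything else is routine bookkeeping with the already-established values of $N(P)$, $m$, and the $6$-cycle count. This approach has the advantage of avoiding the overlapping-case difficulties that complicate the tubular count in Lemma~\ref{qtub}, since summing the chord count $x(P)$ treats a path carrying several chords with automatically correct multiplicity.
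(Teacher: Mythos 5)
Your argument is correct, and while it starts from the same decomposition as the paper (choose a $5$-path, then an edge disjoint from it, so that $N(Q)=\sum_P d(P)$), the way you evaluate the sum is genuinely different and cleaner. The paper computes $\sum_P d(P)$ by partitioning the $5$-paths into types --- paths lying on a $6$-cycle, paths with three consecutive edges on a square (further split according to whether that square has $0$, $1$ or $2$ neighbouring squares, introducing auxiliary counts $x_0,x_1,x_2$), paths embedded in a dual-square, and the rest --- and then has to verify the resulting expression separately for the lantern, dispersive and hexagonal-prism structures using Theorem~\ref{category}. You instead linearize: the identity $d(P)=m-13+x(P)$ reduces everything to the incidence count $\sum_P x(P)$, which you split by chord type into $A=6(h+y)$ (closing chords, via Lemma~\ref{6cycle1}) and $B=120$ (square chords, $20$ incidences per square). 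This buys you three things: the structural case analysis over $x_0,x_1,x_2$ and Theorem~\ref{category} disappear entirely; paths carrying two chords (the paper's delicate ``embedded in a dual-square'' and ``$6$-cycle'' special cases with deduction $m-11$) are handled automatically with the right multiplicity; and it becomes transparent why the answer depends only on $h$ and $y$. The one step you rightly flag --- that each (square, traversal, extension) triple yields a simple path and that distinct triples yield distinct paths --- does check out: a coincidence $p_5\in\{p_0,p_2\}$ or $p_4=p_1$ forces a triangle, contradicting bipartiteness, and $p_5=p_1$ forces a second $4$-cycle (hence, by Lemma~\ref{square}, a second square face) sharing two edges with the first, contradicting Lemma~\ref{eulercac}(iii). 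Your totals agree with the paper's: summing the paper's contributions also gives $120$ square-chord incidences and $6(h+y)$ closing-chord incidences, and $(3h-1)(48h+120)+6(h+y)+120=144h^2+318h+6y$ as required.
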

        \begin{proof}
            We assume that there are $x_k$ squares that have precisely $k$ neighbouring square faces, where $0\leqslant k \leqslant 2$.
            As in the proof of Lemma~\ref{qtub}, we consider different types of 5-length paths (subgraph $P$).

            First, if the 5-length path chosen is in a 6-cycle, then by Theorem~\ref{category} the number of resulted $Q$ is
            \begin{displaymath}
                h\times6\times(m - 12)+y\times6\times (m-11).
            \end{displaymath}
            Second, we consider whether some 3 consecutive edges of the chosen path are in a square. Also we should notice that the three edges in one square might be the ``head" three edges or the ``inner" three edges. We deal with a special case in the first place where the 5-length path chosen is embedded in a dual-square as shown in Figure~\ref{illustubf2}, the number of such resulted $Q$ is $2y\times(m-11)$. For the other cases,\\
            (a) if the square has no neighbouring squares, then the number of resulted $Q$ is
            \begin{displaymath}
                x_0\times4\times2\times2\times(m-12)+x_0\times4\times(m-12);
            \end{displaymath}
            (b) if the square has one neighbouring square, then the number of resulted $Q$ is
            \begin{displaymath}
                x_1\times12\times(m-12)+x_1\times3\times(m-12);
            \end{displaymath}
            (c) if the square has two neighbouring squares, then the number of resulted $Q$ is
            \begin{displaymath}
                x_2\times8\times(m-12)+x_2\times2\times(m-12).
            \end{displaymath}
            Third, in the remaining cases, the number of resulted $Q$ is
            \begin{displaymath}
                [N(P)-6h-6y-2y-20x_0-15x_1-10x_2]\times(m-13).
            \end{displaymath}
            Thus, we have
            \begin{displaymath}\begin{split}
                 N(Q)&=6h\times(m-12)+6y\times(m-11)\\
                    &{\ }+2y\times(m-11)+(20x_0+15x_1+10x_2)\times(m-12)\\
                    &{\ }+[N(P)-6h-6y-2y-(20x_0+15x_1+10x_2)]\times(m-13).\\
            \end{split}\end{displaymath}
            Observe that in a (4,6)-fullerene of lantern structure $x_0=0, x_1=4, x_2=2, y=4$, in a (4,6)-fullerene of dispersive structure $x_0=6-2y, x_1=2y, x_2=0$, and in a hexagonal prism, $x_0=x_1=0, x_2=6, y=6$. An elementary calculation completes our proof.
        \end{proof}
        \begin{lemma}\label{qcube}
            If $G$ is a cube, then $M(G,6)= 0$ and $N(Q)=96$.
        \end{lemma}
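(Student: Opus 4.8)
The plan is to obtain $M(G,6)=0$ from a trivial vertex count and then read off $N(Q)=96$ from the formula \eqref{mm6rec} already in hand. The cube is the degenerate case $h=0$, so by Lemma~\ref{eulercac}(i) it has $n=8$ vertices and $m=12$ edges. A $6$-matching is a set of six pairwise disjoint edges and therefore spans $12$ distinct vertices; since the cube has only $8$, no such matching exists and $M(G,6)=0$. (More is true: $M(G,k)=0$ for every $k\geqslant 5$, matching the vanishing of \eqref{mm5} at $h=0$.)

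For $N(Q)$ I would invoke \eqref{mm6rec}. The recurrence system of Lemma~\ref{receq} and the auxiliary counts of Lemmas~\ref{pre0} and~\ref{pre} were all established for an arbitrary (4,6)-fullerene graph, and each specializes correctly at $h=0$, so \eqref{mm6rec} is legitimate for the cube. Substituting $h=0$ annihilates every polynomial term and leaves
\[
    M(G,6)=-16+\tfrac{1}{6}N(Q).
\]
Combining this with $M(G,6)=0$ gives $N(Q)=96$.

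As an independent confirmation one can count $N(Q)$ directly. A copy of $Q$ consists of a path of length $5$ together with one disjoint edge; such a path occupies $6$ of the $8$ vertices, so the disjoint edge is forced to be the unique edge (if present) joining the two leftover vertices. Thus $N(Q)$ is the number of length-$5$ paths whose complementary vertex pair is adjacent. By edge-transitivity of the cube every adjacent pair is deleted symmetrically, and a short enumeration shows that deleting one edge leaves exactly $8$ Hamiltonian paths on the remaining $6$ vertices; since there are $12$ edges, $N(Q)=12\times 8=96$, which is also consistent with $N(P)=120$ at $h=0$ from Lemma~\ref{pre}.

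There is essentially no obstacle here; the only point deserving care is the legitimacy of applying \eqref{mm6rec} at $h=0$, which I would justify by observing that the cube is a genuine (4,6)-fullerene graph and that no ingredient in the derivation of Theorem~\ref{res} degenerates when $h=0$.
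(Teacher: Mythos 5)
Your argument is correct and takes essentially the same route as the paper: both deduce $N(Q)=96$ by setting $h=0$ in \eqref{mm6rec} once $M(G,6)=0$ is established. The only difference is that you get $M(G,6)=0$ from the trivial vertex count ($12>8$), whereas the paper reads $M(G,5)=0$ off Theorem~\ref{res} and notes this forces $M(G,6)=0$; your direct enumeration confirming $N(Q)=12\times 8=96$ is a correct extra check not present in the paper.
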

        \begin{proof}
            Theorem~\ref{res} claims that $M(G,5)=0$, which implies $M(G,6)=0$. Further, $N(Q)=96$ by~\eqref{mm6rec}.
        \end{proof}
        Now, we are in position to declare our enumeration result for 6-matchings.
        \begin{theorem}
            Let $G$ be a (4,6)-fullerene graph.\\
                (i) If $G$ is a cube, then $M(G,6)= 0$.\\
                (ii) If $G\in\mathcal{T}$, then
                    \begin{displaymath}
                        M(G,6)= \dfrac{81}{80}h^6-\dfrac{81}{80}h^5-\dfrac{99}{16}h^4+\dfrac{405}{16}h^3-\dfrac{1873}{40}h^2+\dfrac{1231}{30}h-9.
                    \end{displaymath}
                (iii) If $G\notin\mathcal{T}$  is different from  the cube and has $y$ dual-squares, then
                    \begin{displaymath}
                        M(G,6)= \dfrac{81}{80}h^6-\dfrac{81}{80}h^5-\dfrac{99}{16}h^4+\dfrac{405}{16}h^3-\dfrac{1873}{40}h^2+\dfrac{407}{10}h-16+y.
                    \end{displaymath}
        \end{theorem}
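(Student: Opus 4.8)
The plan is to observe that essentially all the work has already been done: Theorem~\ref{res}, and in particular equation~\eqref{mm6rec}, expresses $M(G,6)$ as a fixed degree-six polynomial in $h$ plus the single unknown term $\frac{N(Q)}{6}$. Hence the whole statement reduces to substituting the correct value of $N(Q)$ for each structural family supplied by the classification Theorem~\ref{category}, followed by routine simplification. No new matching-theoretic argument is required; I would simply assemble the pieces already established in Lemmas~\ref{qtub}, \ref{qlant}, and~\ref{qcube}.

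For case~(i), the cube, I would argue directly that $M(G,5)=0$ (the polynomial~\eqref{mm5} has $h$ as a common factor and so vanishes at $h=0$), and since deleting one edge from a $6$-matching yields a $5$-matching, $M(G,5)=0$ forces $M(G,6)=0$; this is exactly Lemma~\ref{qcube}. For case~(ii), $G\in\mathcal{T}$, I would insert $N(Q)=144h^2+320h+42$ from Lemma~\ref{qtub} into~\eqref{mm6rec}: here $\frac{N(Q)}{6}=24h^2+\frac{160}{3}h+7$, so the $h^6$ through $h^3$ terms are unchanged while the last three terms $-\frac{2833}{40}h^2-\frac{123}{10}h-16$ become $-\frac{1873}{40}h^2+\frac{1231}{30}h-9$, which is the claimed formula. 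For case~(iii), $G\notin\mathcal{T}$ other than the cube with $y$ dual-squares, I would instead use $N(Q)=144h^2+318h+6y$ from Lemma~\ref{qlant}, whence $\frac{N(Q)}{6}=24h^2+53h+y$ and the trailing terms become $-\frac{1873}{40}h^2+\frac{407}{10}h-16+y$, giving the stated expression.

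There is no deep obstacle remaining at this level: the substantive difficulties were dispatched earlier, namely the structural classification resting on the 6-cycle analysis of Theorem~\ref{struct6}, and the delicate path-counting in the $N(Q)$ lemmas, where one must correct for the overcounting produced by $5$-paths lying inside $6$-cycles or having three consecutive edges inside a square. The only thing to monitor in the present proof is arithmetic bookkeeping: correctly pooling the $h^2$, $h^1$, and $h^0$ coefficients so that $-\frac{2833}{40}+24=-\frac{1873}{40}$, and checking that the tubular and non-tubular formulae agree through degree three and diverge only in the quadratic, linear, and constant terms, with the extra summand $+y$ recording the dependence on the number of dual-squares when $G\notin\mathcal{T}$.

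Finally, I would note consistency as a sanity check: the tubular and non-tubular polynomials must coincide on any graph lying in both interpretations only through the shared high-degree part, and the $+y$ term must reproduce the cube value when the degenerate case is excluded, confirming that the three cases of Theorem~\ref{category} exhaust all $(4,6)$-fullerene graphs and that each feeds a legitimate value of $N(Q)$ into~\eqref{mm6rec}.
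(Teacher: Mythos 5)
Your proposal is correct and follows exactly the paper's own route: the paper likewise disposes of the cube via Lemma~\ref{qcube} and obtains cases (ii) and (iii) by substituting the values of $N(Q)$ from Lemmas~\ref{qtub} and~\ref{qlant} into~\eqref{mm6rec}, and your coefficient arithmetic ($-\tfrac{2833}{40}+24=-\tfrac{1873}{40}$, $-\tfrac{123}{10}+\tfrac{160}{3}=\tfrac{1231}{30}$, $-\tfrac{123}{10}+53=\tfrac{407}{10}$) checks out. No gap.
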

        \begin{proof}
            $M(G,6)$ in a cube has already been calculated in Lemma~\ref{qcube}. \eqref{mm6rec} together with Lemmas~\ref{qtub} and~\ref{qlant} imply our results for the other two cases.
        \end{proof}

        \begin{corollary}
            Let $G$ be a (4,6)-fullerene graph.\\
                (i) If $G$ is a cube, then $N(U)=N(S)=N(R)=0$.\\
                (ii) If $G\in\mathcal{T}$, then
                    \begin{displaymath}\begin{split}
                        N(U)&=108 h^3+216 h^2- 68h+ 54;\\
                        N(S)&=54h^4+ 108h^3- 138h^2+ 176h-54;\\
                        N(R)&=\dfrac{81}{4} h^5+\dfrac{27}{2} h^4-\dfrac{477}{4} h^3+\dfrac{471}{2} h^2-230 h+54.
                    \end{split}\end{displaymath}
                (iii) If $G\notin\mathcal{T}$ is different from  the cube and has $y$ dual-squares, then
                    \begin{displaymath}\begin{split}
                        N(U)&=108 h^3+216 h^2-66 h-6 y+96;\\
                        N(S)&=54 h^4+108 h^3-138 h^2+174 h+6 y-96;\\
                        N(R)&=\dfrac{81}{4} h^5+\dfrac{27}{2} h^4-\dfrac{477}{4} h^3+\dfrac{471}{2} h^2-228 h-6y+96.
                    \end{split}\end{displaymath}
        \end{corollary}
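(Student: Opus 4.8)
The plan is to observe that essentially all the work has already been done, so the corollary is a direct substitution. Corollary~\ref{coroEX}, through equations~\eqref{coroEX4}, \eqref{coroEX5} and~\eqref{coroEX6}, already writes each of $N(U)$, $N(S)$ and $N(R)$ as an explicit polynomial in $h$ together with a single occurrence of the still-undetermined quantity $N(Q)$, namely
\begin{displaymath}\begin{split}
N(U)&=108h^3+360h^2+252h+96-N(Q),\\
N(S)&=54h^4+108h^3-282h^2-144h-96+N(Q),\\
N(R)&=\dfrac{81}{4}h^5+\dfrac{27}{2}h^4-\dfrac{477}{4}h^3+\dfrac{759}{2}h^2+90h+96-N(Q).
\end{split}\end{displaymath}
Hence the three-case corollary reduces to inserting, in each case, the value of $N(Q)$ computed in the relevant lemma and collecting like terms.

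For part~(i) I would use the cube, where $h=0$ and Lemma~\ref{qcube} gives $N(Q)=96$; since each of the displays above has constant term $\pm 96$ that cancels against $N(Q)$, all three counts vanish. For part~(ii), $G\in\mathcal{T}$, so Lemma~\ref{qtub} provides $N(Q)=144h^2+320h+42$, which I would substitute and simplify. For part~(iii), $G\notin\mathcal{T}$ other than the cube, and Lemma~\ref{qlant} gives $N(Q)=144h^2+318h+6y$; because $N(Q)$ enters each expression linearly, the term $6y$ is simply carried through with the sign dictated by~\eqref{coroEX4}--\eqref{coroEX6}, yielding the stated $\pm 6y$ dependence.

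There is no conceptual obstacle; the only genuine risk is arithmetic slippage, and the place to be careful is the fractional coefficients of $N(R)$. Concretely, one must rewrite $144=\frac{288}{2}$ to obtain $\frac{759}{2}-144=\frac{471}{2}$, and track the linear and constant corrections separately ($90-320=-230$ and $96-42=54$ in the tubular case; $90-318=-228$ and the constant $96$ left intact apart from the $-6y$ contribution in the non-tubular case). Once these coefficient computations are checked against the three claimed formulae, the corollary follows immediately.
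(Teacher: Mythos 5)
Your proposal is correct and follows exactly the paper's own route: the paper likewise derives the corollary by substituting the values of $N(Q)$ from Lemmas~\ref{qtub}, \ref{qlant} and~\ref{qcube} into the expressions \eqref{coroEX4}--\eqref{coroEX6} of Corollary~\ref{coroEX}. Your arithmetic checks (including the $\frac{759}{2}-144=\frac{471}{2}$ step and the cancellation of the constant $96$ in the cube case) all agree with the stated formulae.
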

        \begin{proof}
            This corollary follows from Corollary~\ref{coroEX}, and Lemmas~\ref{qtub},~\ref{qlant} and~\ref{qcube} immediately.
        \end{proof}

\end{document}